\theoremstyle{plain}%
\newtheorem{theorem}{Theorem}[section]% meant for sectionwise numbers
\newtheorem{proposition}[theorem]{Proposition}% 
\newtheorem{lemma}[theorem]{Lemma}% 
\newtheorem{cor}[theorem]{Corollary}% 
\theoremstyle{definition}
\newtheorem{example}[theorem]{Example}%
\newtheorem{remark}[theorem]{Remark}%
\newtheorem{definition}[theorem]{Definition}%
\begin{document}

\title{A classification of nilpotent compatible Lie algebras} %% Article title

\author[1]{Manuel Ladra \thanks{manuel.ladra@usc.es}}
\author[1]{Bernardo Leite da Cunha \thanks{bernardo.mariz@rai.usc.es}}
\author[2]{Samuel A. Lopes \thanks{slopes@fc.up.pt}}
\affil[1]{Departamento de Matem\'aticas \& CITMAga, Universidade de Santiago de Compostela,
	15782, Spain} 
\affil[2]{CMUP, Departamento de Matem\'atica, Faculdade de Ci\^encias, Universidade do Porto, Rua do Campo Alegre s/n, 4169--007 Porto, Portugal.}

\maketitle

\abstract{Working over an arbitrary field of characteristic different from $2$, we extend the Skjelbred-Sund method to compatible Lie algebras and give a full classification of nilpotent compatible Lie algebras up to dimension $4$. In case the base field is cubically closed, we find that there are three isomorphism classes and a one-parameter family in dimension $3$, and $12$ isomorphism classes, $6$ one-parameter families and one $2$-parameter family in dimension $4$.
	
	MSC 17D99 Other nonassociative rings and algebras -- None of the above, but in this section 
	
	MSC 17B30 Lie algebras and Lie superalgebras -- Solvable, nilpotent (super)algebras 
	
	MSC 17A30 General nonassociative rings -- Nonassociative algebras satisfying other identities
}

\section{Introduction}

A compatible Lie algebra is a vector space $\g$ endowed with two Lie products $\sqb{-,-}$ and $\cb{-,-}$ such that any linear combination $\lambda \sqb{-,-}+\mu \cb{-,-}$ is still a Lie product, or equivalently, such that the following identity  holds for any $x, y, z \in \g$ (see Proposition~\ref{compatibleequiv})
\begin{align*}%\label{mixedjacobi}
	\begin{split}
		& \cb{\sqb{x,y},z}+\cb{\sqb{y,z},x}+\cb{\sqb{z,x},y} \\
		& + \sqb{\cb{x,y},z}+\sqb{\cb{y,z},x}+\sqb{\cb{z,x},y}=0.
	\end{split}
\end{align*}
Compatible Lie algebras are considered in many fields in mathematics and mathematical physics, such as the study of the classical Yang-Baxter equation~\cite{CompatibleYangBaxter}, integrable equations of the principal chiral model type~\cite{CompatibleChiral}, elliptic theta functions~\cite{CompatibleEliptic}, and loop algebras over Lie algebras~\cite{CompatibleLoop}. 

When, instead of considering general linear combinations of the form $\lambda \sqb{-,-}+\mu \cb{-,-}$, one fixes $\lambda=1$, the resulting product can be seen as an infinitesimal deformation of $\sqb{-,-}$. This follows by observing that the above identity is simply the cocycle identity for the adjoint module.  Thus, the study of compatible Lie algebras is also connected with deformation theory. In~\cite{MCCohomCLA}, the authors introduced a cohomology theory where they classify infinitesimal deformations of compatible Lie algebras via the second cohomology group. This cohomology theory is especially relevant to the present article.

In a broader sense, bi-Hamiltonian structures play an essential role  in the theory of integrable systems from mathematical physics. Such structures correspond to pairs of compatible Poisson brackets defined on the same manifold. In~\cite{CharacterFormulasBiHamiltonian06}, the authors have  studied the operads of compatible Lie  algebras and bi-Hamiltonian algebras.

This notion is a particular case of the idea of compatible algebraic structures. Two algebraic structures of the same type (i.e.~associative algebras, Lie algebras, Leibniz algebras, etc.) $(V, \circ)$ and $(V, \ast)$ with the same underlying vector space are said to be compatible if $(V, \lambda \cdot \circ + \mu  \cdot  \ast)$ has the same algebraic structure as the first ones for any scalars $\lambda, \mu$. Some non-Lie examples in the literature include compatible associative algebras in~\cite{OdesskiiSokolovPairsCompatibleAssoc2006} and~\cite{OdesskiiSokolovPairsCompatibleAssoc2008}, compatible associative bialgebras in~\cite{CompatibleAlgebrasMarquez}, compatible Lie bialgebras in~\cite{CompatibleLieBialgebras2015} and compatible Leibniz algebras in~\cite{CompatibleLeibniz2023} and~\cite{CohomDeformCompLeibninz2023}. General compatible structures have been studied from an operadic point of view in~\cite{OperadsCompatible08}.

As these algebraic structures are of interest, one of the first problems to consider is their classification. 
In this paper, we will restrict ourselves to the classification of nilpotent compatible Lie algebras.

Generalisations of the method developed by Skjelbred--Sund in~\cite{SkjelbredSund77} -- originally in regard to classifying nilpotent Lie algebras -- have been recently used to great effect for many kinds of extension problems.	
The original method and its generalisations allow one to classify central extensions of a given variety and consequently to obtain a classification of the nilpotent algebras up to a given dimension, as nilpotent algebras can always be obtained via central extensions of lower dimensional ones. Some of the varieties of algebras and types of extensions which have been classified using generalisations of the Skjelbred--Sund method are the following (we make no attempt at giving a complete list, see~\cite{KaygorodovHabilitation} and the references therein for a larger list of examples): associative algebras up to dimension $4$ in~\cite{NilpAssClass10}, nilpotent Lie algebras up to dimension $6$ in~\cite{NilpLieClass12}, nilpotent Jordan algebras up to dimension $5$ in~\cite{NilpJordanClass16}, central extensions of null-filiform and naturally graded filiform non-Lie Leibniz algebras in~\cite{NullFiliformLeib17}, nilpotent Malcev algebras up to dimension $6$ in~\cite{NilpMalcevClass18}, alternative, left alternative, Jordan, bicommutative, left commutative, assosymmetric, Novikov and left symmetric central extensions of null filiform associative algebras in~\cite{NullFilliformClass20}, nilpotent anticommutative algebras up to dimension $6$ in~\cite{NilpAlgClass21}, nilpotent algebras up to dimension $4$ in~\cite{NilpAnticommClass20} and nilpotent Poisson algebras up to dimension $4$ in~\cite{NilpPoissonClass}. This last reference is particularly interesting for us, as Poisson algebras are algebraic structures with \emph{two} products, as are compatible Lie algebras, and the specifics of the method we use are more closely related to this case.

This article is organised as follows. In Section~\ref{Prelims}, we give the basic definitions related to compatible Lie algebras, including the definition of cohomology as presented in~\cite{MCCohomCLA}. In Section~\ref{TheoreticalBases}, we start by defining the property of nilpotency in the compatible Lie algebra setting. We proceed by defining the notion of central extensions and relate them with cohomology, and then showing that any nilpotent compatible Lie algebra can be seen as a central extension of a lower dimensional one. Then we pose the problem of classifying extensions up to isomorphism and consider a generalisation of the Skjelbred--Sund method, proving the necessary results to show that it is a suitable tool for tackling the isomorphism question and thus for classifying nilpotent compatible Lie algebras. These preliminary sections come together in Section~\ref{Classification}, where we use the method to obtain a complete classification of nilpotent compatible Lie algebras up to dimension $4$. In case the base field is cubically closed, we find that there are three isomorphism classes and a one-parameter family in dimension $3$, and $12$ isomorphism classes, $6$ one-parameter families and one $2$-parameter family in dimension $4$.

\section{Preliminaries}\label{Prelims}

\subsection{Elementary definitions}

In this section we start by defining \emph{compatible Lie algebras} and the basics about them. We let $\KK$ be an arbitrary field of characteristic different from $2$.

\begin{proposition}\label{compatibleequiv}
	Let $\underline{\g}=(\g,\sqb{-,-})$ and $\undertilde{\g}=(\g,\cb{-,-})$ be two Lie algebras over the same vector space $\g$. Then the following conditions are equivalent:
	\begin{enumerate}[(i)]
		\item\label{comp1} $(\g, \dbl{-,-})$ is a Lie algebra, where $\dbl{x,y}=\sqb{x,y}+\cb{x,y}$ for all $x,y \in \g$;
		\item\label{comp2} $(\g, \dbl{-,-}_{\lambda,\lambda'})$ is a Lie algebra for all $\lambda, \lambda' \in \KK$, where \[\dbl{x,y}_{\lambda,\lambda'}=\lambda\sqb{x,y}+\lambda'\cb{x,y} \text{for all } x,y \in \g;\]
		\item\label{comp3} The following identity (named the \emph{mixed Jacobi identity}) holds for all $x,y,z \in \g$:
		\begin{align}\label{mixedjacobi}
			\begin{split}
				& \cb{\sqb{x,y},z}+\cb{\sqb{y,z},x}+\cb{\sqb{z,x},y} \\
				& + \sqb{\cb{x,y},z}+\sqb{\cb{y,z},x}+\sqb{\cb{z,x},y}=0.
			\end{split}
		\end{align}
	\end{enumerate}
\end{proposition}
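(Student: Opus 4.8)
The plan is to observe that for any scalars $\lambda,\lambda' \in \KK$ the combination $\dbl{-,-}_{\lambda,\lambda'}=\lambda\sqb{-,-}+\lambda'\cb{-,-}$ is automatically bilinear and antisymmetric, since $\sqb{-,-}$ and $\cb{-,-}$ both are. Hence the only axiom that can fail for $(\g,\dbl{-,-}_{\lambda,\lambda'})$ to be a Lie algebra is the Jacobi identity, and the whole proposition reduces to a single computation of the corresponding Jacobiator, after which the three implications follow formally.

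First I would expand $\dbl{\dbl{x,y}_{\lambda,\lambda'},z}_{\lambda,\lambda'}$ by bilinearity into four terms and sum over the cyclic permutations of $(x,y,z)$. Collecting by powers of $\lambda$ and $\lambda'$, the Jacobiator of $\dbl{-,-}_{\lambda,\lambda'}$ becomes
\begin{equation*}
\lambda^2\!\sum_{\mathrm{cyc}}\sqb{\sqb{x,y},z} \;+\; \lambda\lambda'\,M(x,y,z) \;+\; \lambda'^2\!\sum_{\mathrm{cyc}}\cb{\cb{x,y},z},
\end{equation*}
where $M(x,y,z)$ is exactly the left-hand side of the mixed Jacobi identity~\eqref{mixedjacobi}. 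The two outer sums are the ordinary Jacobiators of $\sqb{-,-}$ and $\cb{-,-}$, which vanish because these are Lie brackets by hypothesis. Thus the Jacobiator of $\dbl{-,-}_{\lambda,\lambda'}$ equals $\lambda\lambda'\,M(x,y,z)$.

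With this identity in hand the three implications are immediate. For \ref{comp3}$\,\Rightarrow\,$\ref{comp2}, if $M$ vanishes identically then so does the Jacobiator for every $\lambda,\lambda'$, so each $\dbl{-,-}_{\lambda,\lambda'}$ satisfies Jacobi and is a Lie bracket. The implication \ref{comp2}$\,\Rightarrow\,$\ref{comp1} is the specialisation $\lambda=\lambda'=1$. Finally, for \ref{comp1}$\,\Rightarrow\,$\ref{comp3}, taking $\lambda=\lambda'=1$ shows that the Jacobiator of $\dbl{-,-}$ equals $M(x,y,z)$; since $(\g,\dbl{-,-})$ is assumed to be a Lie algebra this Jacobiator vanishes, which is precisely~\eqref{mixedjacobi}. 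I expect no genuine obstacle here: the only care needed is the bookkeeping of the cross terms when expanding the nested brackets, together with the observation that the pure-$\lambda^2$ and pure-$\lambda'^2$ contributions are exactly the two individual Jacobiators, so that they drop out and leave only the mixed term.
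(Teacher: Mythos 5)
Your proposal is correct and complete. Note that the paper itself states Proposition~\ref{compatibleequiv} without proof, so there is no argument in the text to compare against; your computation supplies exactly the standard one. The key identity — that the Jacobiator of $\dbl{-,-}_{\lambda,\lambda'}$ expands as $\lambda^2$ times the Jacobiator of $\sqb{-,-}$, plus $\lambda'^2$ times that of $\cb{-,-}$, plus $\lambda\lambda'$ times the mixed term $M(x,y,z)$ — is verified correctly, the observation that bilinearity and the alternating property pass automatically to linear combinations is the right reduction, and the cycle of implications \ref{comp3}$\Rightarrow$\ref{comp2}$\Rightarrow$\ref{comp1}$\Rightarrow$\ref{comp3} (the last using that the pure $\lambda^2$ and $\lambda'^2$ contributions vanish, so the Jacobiator of $\dbl{-,-}$ \emph{equals} $M$) establishes all equivalences with no gaps.
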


With this we have the definition of compatible Lie algebras.

\begin{definition}
	A \emph{compatible Lie algebra} is a triple $(\g,\sqb{-,-},\cb{-,-})$, where $\underline{\g}=(\g,\sqb{-,-})$ and $\undertilde{\g}=(\g,\cb{-,-})$ are Lie algebras satisfying any of the three equivalent conditions in Proposition~\ref{compatibleequiv}.
\end{definition}

\begin{definition}
	A \emph{compatible Lie algebra homomorphism} between two compatible Lie algebras $(\g,\sqb{-,-}_\g,\cb{-,-}_\g)$ and $(\hh,\sqb{-,-}_\hh,\cb{-,-}_\hh)$ is a linear map $\varphi:\g \rightarrow \hh$ which is a Lie algebra homomorphism between $\underline{\g}$ and  $\underline{\hh}$, and  a Lie algebra homomorphism between $\undertilde{\g}$ and  $\undertilde{\hh}$.
\end{definition}

\begin{remark}
	This does not imply that $\varphi$ is a homomorphism between $\underline{\g}$ and $\undertilde{\hh}$ or vice-versa, which leads us to the next definition.
\end{remark}

\begin{definition}
	A \emph{skew-homomorphism} between two compatible Lie algebras $(\g,\sqb{-,-}_\g,\cb{-,-}_\g)$ and $(\hh,\sqb{-,-}_\hh,\cb{-,-}_\hh)$ is a linear map $\varphi:\g \rightarrow \hh$ which is a Lie algebra homomorphism between $\underline{\g}$ and  $\undertilde{\hh}$, and a Lie algebra homomorphism between $\undertilde{\g}$ and  $\underline{\hh}$.
\end{definition}

An invertible homomorphism is an \emph{isomorphism} and an invertible skew-homomorphism is a \emph{skew-isomorphism}. We write:
\begin{itemize}
	\item $\g\cong \hh$ if $\g$ and $\hh$ are \emph{isomorphic};
	\item $\g\skiso \hh$ if $\g$ and $\hh$ are \emph{skew-isomorphic};
	\item $\g\ambi \hh$ if $\g$ and $\hh$ are both isomorphic and skew-isomorphic, calling these \emph{ambi-isomorphic}.
\end{itemize}

\begin{definition}
	Let $(\g,\sqb{-,-},\cb{-,-})$ be a compatible Lie algebra. If we switch the order of the brackets, the resulting structure $(\g,\cb{-,-},\sqb{-,-})$ is still a compatible Lie algebra, which we denote by $\g^s$.
	It is clear that $\g\skiso \g^s$ via the identity map. We denote the fact that two algebras are switch copies of one another by $\hh=\g^s$.
\end{definition}

As expected, there is a notion of subalgebra and ideal.

\begin{definition}
	A \emph{subalgebra} of a compatible Lie algebra $\g$ is a vector subspace of $\g$ which is closed for both products.
	An \emph{ideal} $\mathfrak{i}$ of a compatible Lie algebra $\g$ is a vector subspace such that
	$
	\sqb{\mathfrak{i},\g}, \cb{\mathfrak{i},\g} \subseteq \mathfrak{i}. 
	$
\end{definition}

It holds true that the kernel of a homomorphism is an ideal of the domain and that the image of a homomorphism is a subalgebra of the codomain. The notion of quotient is well defined and the usual isomorphism theorems hold.

\begin{definition}
	The \emph{centre} of a compatible Lie algebra $\g$, denoted by $Z(\g)$, is the ideal defined by
	\[Z(\g)=\cb{x \in \g \mid \sqb{x,\g}=0=\cb{x,\g} } = Z(\underline{\g})\cap Z(\undertilde{\g}) .\]
\end{definition}

\begin{definition}
	A compatible Lie algebra is said to be \emph{abelian} if both its products are trivial, or equivalently, if it is equal to its centre.
\end{definition}

\begin{remark}\label{rmkautcentre}
	It is clear that an isomorphism $\varphi: \g \stackrel{\cong}{\longrightarrow} \hh$ maps $Z(\underline\g)$ to $Z(\underline\hh)$ and  $Z(\undertilde\g)$ to $Z(\undertilde\hh)$, and thus it also maps $Z(\g)$ to $Z(\hh)$. In the same way, a skew-isomorphism maps $Z(\underline\g)$ to $Z(\undertilde\hh)$ and  $Z(\undertilde\g)$ to $Z(\underline\hh)$, and therefore it also maps $Z(\g)$ to $Z(\hh)$.
\end{remark}

\subsubsection*{Some examples}

We provide some examples for gaining information. 

\begin{example}[A trivial example]
	Given a Lie algebra $(\g,\sqb{-,-})$, any multiple of its Lie product is still a Lie product. Thus, setting the second product $\cb{-,-}=\lambda\sqb{-,-}$, we obtain a compatible Lie algebra $(\g,\sqb{-,-},\cb{-,-})$. 
	%This can be seen, for instance, by observing that $\dbl{-,-}$ (as in the first of the equivalent conditions defining compatible Lie algebras) is simply equal to $(\lambda+1)\sqb{-,-}$, and thus a Lie product in its own right.
\end{example}

\begin{example}[A not-so-trivial example]
	Let $\g$ be a three-dimensional vector space generated by $x,y,z$. Define the following products:
	\begin{align*}
		\sqb{x,y}= z, & \qquad z \in Z(\underline\g), \text{ and} \\
		\cb{x,y}= z, & \qquad \cb{x,z}= 2x, \qquad \cb{y,z}= -2y.
	\end{align*}
	It is clear that the structures $\underline\g$ and $\undertilde\g$ are Lie algebras: $\underline\g$ is the Heisenberg Lie algebra of dimension $3$, and $\undertilde\g=\fsl_2 $.
	
	To check that this pair of products is compatible, we only need to check the mixed Jacobi identity on distinct basis elements. 
	We compute

	\begin{align*}
		\cb{\sqb{x,y},z}+\cb{\sqb{y,z},x}+\cb{\sqb{z,x},y} &= \cb{z,z}+\cb{0,x}+\cb{0,y} = 0; \\
		\sqb{\cb{x,y},z}+\sqb{\cb{y,z},x}+\sqb{\cb{z,x},y} &= \sqb{z,z}+\sqb{-2y,x}+\sqb{-2x,y} = 0+2z-2z =0;
	\end{align*}
	and thus, the mixed Jacobi identity, being the sum of the two expressions above, is equal to zero.
\end{example}

\begin{example}[A non-example]
	An obvious question is whether any two Lie products on the same vector space are compatible. This is false, as can be seen by the following non-example. Let $\g$ be a three-dimensional vector space generated by $x,y,z$. Define the following products:
	\begin{align*}
		\sqb{x,y}= x, & \qquad z \in Z(\underline\g), \text{ and} \\
		\cb{x,y}= z, & \qquad \cb{x,z}= 2x, \qquad \cb{y,z}= -2y.
	\end{align*}
	It is clear that the structures $\underline\g$ and $\undertilde\g$ are Lie algebras: $\underline\g = \g_2 \oplus \KK z$, where $\g_2$ is the non-abelian Lie algebra of dimension two, and $\undertilde\g=\fsl_2 $. But this pair of products is not compatible: 
	\begin{align*}
		\cb{\sqb{x,y},z}+\cb{\sqb{y,z},x}+\cb{\sqb{z,x},y} &= \cb{x,z}+\cb{0,x}+\cb{0,y} = 2y; \\
		\sqb{\cb{x,y},z}+\sqb{\cb{y,z},x}+\sqb{\cb{z,x},y} &= \sqb{z,z}+\sqb{-2y,x}+\sqb{-2x,y} \\ &= 2x-2x = 0.
	\end{align*}
	
\end{example}

\begin{example}
	Another question which may arise is whether two compatible Lie algebras are isomorphic if their component Lie algebras are isomorphic. This example shows that this is not true.
	
	Let $\hh$ be a three-dimensional vector space generated by $x,y,z$. Define the following two pairs of products:
	\begin{align*}
		\sqb{x,y}_1= z,  z \in Z(\underline\hh_1), &\text{ and} \cb{x,y}_1= z,  z \in Z(\undertilde\hh_1); \\
		\sqb{x,y}_2= z,   z \in Z(\underline\hh_2), &\text{ and} 	\cb{x,z}_2= y,   y \in Z(\undertilde\hh_2).
	\end{align*}
	It is an easy exercise to check that both pairs of products are compatible (as before, in dimension $3$ we only need to check the mixed Jacobi identity once, using all distinct basis vectors). We have that, as Lie algebras, $\underline\hh_1$, $\undertilde\hh_1$, $\underline\hh_2$ and $\undertilde\hh_2$ are all isomorphic to the three-dimensional Heisenberg Lie algebra. However, $Z(\hh_1)=\ab{z}$ and $Z(\hh_2)=0$, so $\hh_1 \not\cong \hh_2$.
	
\end{example}

\subsection{Cohomology of compatible Lie algebras}

In this subsection, we follow the definitions in~\cite{MCCohomCLA}, where the authors introduce and develop a cohomology theory with coefficients in a representation.

\begin{definition}[\cite{CompatibleLieBialgebras2015}]\label{representation}
	A \emph{representation} of a compatible Lie algebra $\g$ is a triple $(V,\rho,\mu)$, where $(V,\rho)$ is a representation of $\underline{\g}$, $(V,\mu)$ is a representation of $\undertilde\g$, and $(V,\rho+\mu)$ is a representation of $(\g,\sqb{-,-}+\cb{-,-})$.

\end{definition}

Let $(\g, \sqb{-,-})$ be a Lie algebra and let $(V,\rho)$ be a representation of $\g$.

Recall the Chevalley-Eilenberg complex (cf.~\cite{WeibelHomAlg}) given  by $C^n(\g,V)=\Hom_\KK(\bigwedge^n \g,V)$, for $n \geq 0$, and  differentials:
\begin{align*}
	(d^nf)(x_1,\dots,x_{n+1}) &= \sum_{i=1}^{n+1} (-1)^{i+1} \rho(x_i)f(x_1,\dots,\hat{x}_i,\dots,x_{n+1}) \\
	& +\sum_{1\leq i<j\leq n+1} (-1)^{i+j} f([x_i,x_j],x_1,\dots,\hat{x}_i,\dots,\hat{x}_j,\dots,x_{n+1}).
\end{align*}
We also define the spaces $\Cd^0(\g,V)=\cb{v \in V \mid \rho(x)v=\mu(x)v, \  x \in \g}$ and $\Cd^n(\g,V)=\displaystyle\bigoplus^n C^n(\g,V)$ (i.e.~the direct sum of $n$ copies of $C^n(\g,V)$).%

We define the following family of maps $\delta^n:\Cd^n(\g,V) \rightarrow \Cd^{n+1}(\g,V)$ by
\begin{gather*}\label{cCEcomplex}
	\delta^0v=d^0_{\rho}v=d^0_{\mu}v, \quad \delta^1(f) = (d^1_\rho(f),d^1_\mu(f)) \text{, and} \\
	\\
	\delta^n(\omega_1, \ldots, \omega_{n})=
	(d^n_{\rho}\omega_1,d^n_{\mu}\omega_1+d^n_{\rho}\omega_2,\ldots,\underbrace{d^n_{\mu}\omega_i+d^n_{\rho}\omega_{i+1}}_{(i+1)\text{-st position}},\ldots, d^n_{\mu}\omega_{n}),
\end{gather*}
where
$d^i_{\rho}$ (resp. $d^i_{\mu}$) is the $i$-th Chevalley-Eilenberg differential for the representation $(V,\rho)$ of $\underline{\g}$ (resp. for the representation $(V,\mu)$ of $\undertilde{\g}$).

\begin{definition}
	The \emph{compatible Lie algebra cohomology of $\g$ with coefficients in the representation $(V,\rho,\mu)$} is the cohomology associated with the above cochain complex.
\end{definition}

We now show the explicit expressions for the $2$-cocycles and $2$-coboundaries for the case where we consider a trivial representation of $\g$, as these are used in Section~\ref{TheoreticalBases}. 

The following result is a direct consequence of the definitions above.

\begin{proposition}\label{cocycleids}
	When considering the cohomology with trivial coefficients, the $2$-cocycles $\omega \in Z^2(\g,V)$ have the form $\omega=(\underline{\omega},\undertilde{\omega})$ where $\underline{\omega},\undertilde{\omega}: \g \times \g \rightarrow V$ are alternating bilinear maps which satisfy
	\begin{align*}
		&\underline{\omega}(\sqb{x,y},z)+ \underline{\omega}(\sqb{z,x},y) + \underline{\omega}(\sqb{y,z},x) =0, \\
		&\undertilde{\omega}(\cb{x,y},z)+ \undertilde{\omega}(\cb{z,x},y) + \undertilde{\omega}(\cb{y,z},x) =0, \text{ and } \\
		&\underline{\omega}(\cb{x,y},z)+ \underline{\omega}(\cb{z,x},y) + \underline{\omega}(\cb{y,z},x)  \\
		&+ \undertilde{\omega}(\sqb{x,y},z)+ \undertilde{\omega}(\sqb{z,x},y) + \undertilde{\omega}(\sqb{y,z},x) =0,
	\end{align*}
for all $x,y,z \in \g$.

Moreover, the $2$-coboundaries $\delta\varphi \in B^2(\g,V)$ have the form $\delta\varphi=(\underline{\delta\varphi},\undertilde{\delta\varphi})$ for a linear map $\varphi:\g \rightarrow V$, where
\[
\underline{\delta\varphi}(x,y)=\varphi(\sqb{x,y}), \qquad \undertilde{\delta\varphi}(x,y)=\varphi(\cb{x,y}).
\]
\end{proposition}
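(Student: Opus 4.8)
The plan is to unwind the definitions of the Chevalley--Eilenberg differentials $d^1_\rho$ and $d^1_\mu$ for the trivial representation and feed them through the combined differential $\delta^2$ given by the formula $\delta^2(\omega_1,\omega_2)=(d^2_\rho\omega_1,\, d^2_\mu\omega_1+d^2_\rho\omega_2,\, d^2_\mu\omega_2)$. First I would note that since the representation is trivial we have $\rho(x)v=0=\mu(x)v$ for all $x\in\g$ and $v\in V$, so the first sum in each $d^n$ vanishes and only the bracket terms survive. For $n=2$ this means $(d^2_\rho\underline\omega)(x,y,z)$ reduces to $-\underline\omega(\sqb{x,y},z)+\underline\omega(\sqb{x,z},y)-\underline\omega(\sqb{y,z},x)$, which after using the alternating property of $\underline\omega$ to reorder arguments is exactly (up to an overall sign) the cyclic sum $\underline\omega(\sqb{x,y},z)+\underline\omega(\sqb{z,x},y)+\underline\omega(\sqb{y,z},x)$; the same computation applies verbatim to $d^2_\mu$.

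Next I would write out the three components of $\delta^2(\underline\omega,\undertilde\omega)=0$. The first component $d^2_\rho\underline\omega=0$ gives the first displayed identity; the third component $d^2_\mu\undertilde\omega=0$ gives the second; and the middle component $d^2_\mu\underline\omega+d^2_\rho\undertilde\omega=0$ gives the third, mixed identity, where $d^2_\mu$ applied to $\underline\omega$ produces the terms $\underline\omega(\cb{x,y},z)+\underline\omega(\cb{z,x},y)+\underline\omega(\cb{y,z},x)$ (because $d^2_\mu$ uses the second bracket $\cb{-,-}$) and $d^2_\rho$ applied to $\undertilde\omega$ produces the $\undertilde\omega$-terms with the first bracket $\sqb{-,-}$. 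This is precisely the cross term structure claimed in the statement, so I would only need to match signs carefully, which the alternating property handles uniformly.

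For the coboundary part, the argument is even shorter: I would apply $\delta^1$ to a linear map $\varphi\in\Cd^1(\g,V)=C^1(\g,V)$, using $\delta^1(\varphi)=(d^1_\rho\varphi,d^1_\mu\varphi)$. With trivial coefficients, $(d^1_\rho\varphi)(x,y)=-\varphi(\sqb{x,y})$ and $(d^1_\mu\varphi)(x,y)=-\varphi(\cb{x,y})$, so up to the conventional sign (which can be absorbed by replacing $\varphi$ with $-\varphi$, or is simply a sign convention in the definition of $B^2$) we recover $\underline{\delta\varphi}(x,y)=\varphi(\sqb{x,y})$ and $\undertilde{\delta\varphi}(x,y)=\varphi(\cb{x,y})$.

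I do not expect any genuine obstacle here, as the result is asserted in the excerpt to be ``a direct consequence of the definitions''. The only place demanding care is bookkeeping of signs and the reindexing needed to convert the raw Chevalley--Eilenberg output (which produces terms like $\underline\omega(\sqb{x_i,x_j},x_k)$ with ordered indices $i<j$) into the symmetric-looking cyclic sums displayed in the statement; this is where I would be most deliberate, systematically using the alternating identity $\omega(a,b)=-\omega(b,a)$ to normalize every term to the stated form.
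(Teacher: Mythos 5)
Your proposal is correct and takes essentially the same route the paper intends: the paper omits the proof, calling the result a direct consequence of the definitions, and your explicit unwinding of the three components of $\delta^2(\underline\omega,\undertilde\omega)=0$ and of $\delta^1\varphi$ with trivial coefficients (with the harmless sign absorbed into $\varphi$) is precisely that direct verification. One minor bookkeeping remark: converting $\underline\omega(\sqb{x,z},y)$ into $-\underline\omega(\sqb{z,x},y)$ uses the antisymmetry of the bracket together with linearity of $\underline\omega$ in its first argument, not the alternating property of $\underline\omega$ itself, but this does not affect the validity of your argument.
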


\section{Compatible structure theory}\label{TheoreticalBases}

In what follows, all algebras and vector spaces are assumed to be finite dimensional. %and over an arbitrary field $\KK$ of characteristic different from $2$.	

\subsection{Nilpotent compatible Lie algebras}

\begin{definition}
	Let $\mathfrak{s}$ and $\mathfrak{t}$ be two subspaces of a compatible Lie algebra $\g$. We define the commutator $\dbl{\mathfrak{s},\mathfrak{t}}$ of $\mathfrak{s}$ and $\mathfrak{t}$ as
	\begin{align*}
		\dbl{\mathfrak{s},\mathfrak{t}} &= \sqb{\mathfrak{s},\mathfrak{t}} + \cb{\mathfrak{s},\mathfrak{t}} = \spann_\KK\cb{{\sqb{s,t}, \cb{s,t} \mid s \in \mathfrak{s}, t \in \mathfrak{t}}}.
	\end{align*}
\end{definition}

\begin{remark}
	Whenever $\mathfrak{i}$ and $\mathfrak{j}$ are ideals of $\g$, their commutator $\dbl{\mathfrak{i},\mathfrak{j}}$ is a subalgebra. Moreover, if we take an ideal $\mathfrak{i}$, then $\dbl{\g,\mathfrak{i}}$ is an ideal.
\end{remark}

Next, we define the lower and upper central series. We follow this definition with the usual conditions which characterise nilpotency.

\begin{definition}
	Given a compatible Lie algebra $\g$, we define the \emph{lower central series}
	\[
	\ZZZ_0(\g) \supseteq \ZZZ_1(\g) \supseteq \cdots \supseteq \ZZZ_n(\g) \supseteq \cdots
	\]
	by setting
	\[
	\ZZZ_0(\g):= \g, \qquad \ZZZ_i(\g):= \dbl{\g,\ZZZ_{i-1}(\g)},\text{ for } i > 0,
	\]
	and the \emph{upper central series}
	\[
	\ZZZ^0(\g) \subseteq \ZZZ^1(\g) \subseteq \cdots \subseteq \ZZZ^n(\g) \subseteq \cdots
	\]
	by setting	
	\[
	\ZZZ^0(\g)=\cb{0}, \qquad \ZZZ^i(\g)/\ZZZ^{i-1}(\g) = Z\nb{\g/\ZZZ^{i-1}(\g)},\text{ for } i > 0.
	\]
	
\end{definition}

\begin{lemma}\label{equivnilp}
	The following conditions are equivalent:
	\begin{enumerate}[(i)]
		\item\label{nilp1} $\ZZZ_n(\g)= \{0\}$ for sufficiently large $n$;
		\item\label{nilp2} $\ZZZ^n(\g)= \g$ for sufficiently large $n$.
		\item\label{nilp3} There is a decreasing sequence of ideals of $\g$
		\[\g_0 \supseteq \g_1 \supseteq \cdots \supseteq \g_n,\]
		where $\g_0=\g$, $\g_n = \{0\}$, and $\dbl{\g,\g_i}\subseteq \g_{i+1}$ for all $0 \leq i < n$.
		\item\label{nilp4} Any product $(x_0, (x_1, \cdots (x_{n-1},x_n))) $ involving $n+1$ elements is equal to zero, where each $(-,-)$ might be either $\sqb{-,-}$ or $\cb{-,-}$.
	\end{enumerate}
\end{lemma}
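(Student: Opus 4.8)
The plan is to prove the four conditions pairwise equivalent to (i), using the lower central series $\ZZZ_\bullet(\g)$ as the pivot, after recording its two basic properties: each $\ZZZ_i(\g)$ is an ideal (by induction from the Remark, since $\dbl{\g,\mathfrak{i}}$ is an ideal whenever $\mathfrak{i}$ is, starting from $\ZZZ_0(\g)=\g$), and the series is decreasing (bracketing an ideal with $\g$ lands back inside it). The key first step is a monomial description: I claim that for every $n$, $\ZZZ_n(\g)$ is the linear span of all right-nested products $(x_0,(x_1,\cdots(x_{n-1},x_n)))$ of $n+1$ elements, with each bracket independently chosen to be $\sqb{-,-}$ or $\cb{-,-}$. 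This follows by induction on $n$: $\ZZZ_0(\g)=\g$ is the span of single elements, and $\ZZZ_n(\g)=\dbl{\g,\ZZZ_{n-1}(\g)}$ is by definition the span of $\sqb{x,w}$ and $\cb{x,w}$ for $x\in\g$ and $w$ in a spanning set of $\ZZZ_{n-1}(\g)$; substituting the inductive description of $\ZZZ_{n-1}(\g)$ yields exactly the right-nested monomials of length $n+1$. Since the products in (iv) are precisely these monomials, $\ZZZ_n(\g)=0$ if and only if all of them vanish, which is the equivalence of \eqref{nilp1} and \eqref{nilp4}.

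For \eqref{nilp1}$\iff$\eqref{nilp3}, the implication \eqref{nilp1}$\Rightarrow$\eqref{nilp3} is obtained by taking $\g_i=\ZZZ_i(\g)$: this is a decreasing chain of ideals with $\g_0=\g$, $\g_n=0$, and $\dbl{\g,\g_i}=\ZZZ_{i+1}(\g)=\g_{i+1}$. Conversely, given any chain as in \eqref{nilp3}, I would show $\ZZZ_i(\g)\subseteq\g_i$ by induction ($\ZZZ_0=\g=\g_0$, and $\ZZZ_{i+1}=\dbl{\g,\ZZZ_i}\subseteq\dbl{\g,\g_i}\subseteq\g_{i+1}$), so that $\ZZZ_n(\g)\subseteq\g_n=0$. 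This exhibits $\ZZZ_\bullet(\g)$ as the fastest-descending chain of the type in \eqref{nilp3}.

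The equivalence \eqref{nilp1}$\iff$\eqref{nilp2} rests on the inclusion $\dbl{\g,\ZZZ^m(\g)}\subseteq\ZZZ^{m-1}(\g)$, which is immediate from the defining property $\ZZZ^m(\g)/\ZZZ^{m-1}(\g)=Z\nb{\g/\ZZZ^{m-1}(\g)}$: an element whose class is central in the quotient brackets into $\ZZZ^{m-1}(\g)$ against all of $\g$. I also use the converse reading: if the image of a subspace in $\g/\ZZZ^{m-1}(\g)$ is central, that subspace lies in $\ZZZ^m(\g)$. For \eqref{nilp2}$\Rightarrow$\eqref{nilp1}, assuming $\ZZZ^c(\g)=\g$, I prove $\ZZZ_k(\g)\subseteq\ZZZ^{c-k}(\g)$ by induction on $k$ (base $\ZZZ_0=\g=\ZZZ^c$; step $\ZZZ_{k+1}=\dbl{\g,\ZZZ_k}\subseteq\dbl{\g,\ZZZ^{c-k}}\subseteq\ZZZ^{c-k-1}$), so at $k=c$ we get $\ZZZ_c(\g)\subseteq\ZZZ^0(\g)=0$. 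For \eqref{nilp1}$\Rightarrow$\eqref{nilp2}, assuming $\ZZZ_c(\g)=0$, I prove $\ZZZ_{c-k}(\g)\subseteq\ZZZ^k(\g)$ by induction on $k$ (base $\ZZZ_c=0=\ZZZ^0$; for the step, from $\dbl{\g,\ZZZ_{c-k-1}}=\ZZZ_{c-k}\subseteq\ZZZ^k$ the image of $\ZZZ_{c-k-1}$ in $\g/\ZZZ^k(\g)$ is central, hence contained in $\ZZZ^{k+1}(\g)/\ZZZ^k(\g)$), so at $k=c$ we get $\g=\ZZZ_0\subseteq\ZZZ^c(\g)$.

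Because condition \eqref{nilp4} is stated with right-nested brackets, no reassociation of arbitrary bracketings is required, and most of the argument is bookkeeping; the only input beyond linear algebra is that the operators $\dbl{\g,-}$ preserve ideals, which I take from the Remark. The step demanding the most care is the duality \eqref{nilp1}$\iff$\eqref{nilp2}: one must correctly interpret the upper central series through the successive quotients $\g/\ZZZ^{m-1}(\g)$ with their induced compatible structure, translate ``central in the quotient'' into the two inclusions above, and index the two opposite inductions so that both terminate exactly at the nilpotency degree $c$.
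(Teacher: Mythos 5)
Your proof is correct and is exactly the argument the paper intends: the paper omits the proof, saying only that it is ``entirely analogous to the classical case,'' and your write-up is precisely that classical Lie-theoretic argument transported to the two-bracket setting via the commutator $\dbl{-,-}$. In particular, your spanning description of $\ZZZ_n(\g)$ by right-nested monomials (giving \eqref{nilp1}$\iff$\eqref{nilp4}), the chain-comparison induction for \eqref{nilp1}$\iff$\eqref{nilp3}, and the two opposite inductions with the inclusion $\dbl{\g,\ZZZ^m(\g)}\subseteq\ZZZ^{m-1}(\g)$ for \eqref{nilp1}$\iff$\eqref{nilp2} are all sound and correctly indexed.
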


The proof of this result is entirely analogous to the classical case and is thus omitted.

\begin{definition}
	A compatible Lie algebra is said to be \emph{nilpotent} if it satisfies any of the equivalent conditions of Lemma~\ref{equivnilp}.
\end{definition}

A direct consequence of this definition is the following observation.

\begin{lemma}\label{smallernilp}
	Let $\g$ be a compatible Lie algebra. If $\g\neq 0$, then $\g$ 
	%A non-zero nilpotent compatible Lie algebra
	has non-trivial centre. Moreover, $\g$ is nilpotent if and only if $\g/Z(\g)$ is nilpotent.
\end{lemma}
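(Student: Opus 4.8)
The plan is to treat the two assertions separately, both relying on the equivalent characterisations of nilpotency in Lemma~\ref{equivnilp}. Since a nonzero non-nilpotent algebra such as $\fsl_2$ (equipped with a trivial second bracket) has trivial centre, I read the first assertion under the standing nilpotency hypothesis, which is also how the surrounding text frames it as a consequence of the definition. Assuming $\g \neq 0$ is nilpotent, I would use the lower central series characterisation: the series $\ZZZ_0(\g) \supseteq \ZZZ_1(\g) \supseteq \cdots$ terminates, so there is a largest index $m$ with $\ZZZ_m(\g) \neq 0$, and then $\ZZZ_{m+1}(\g) = \dbl{\g, \ZZZ_m(\g)} = 0$. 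By definition of the double bracket this forces $\sqb{\g, \ZZZ_m(\g)} = 0 = \cb{\g, \ZZZ_m(\g)}$, so $\ZZZ_m(\g) \subseteq Z(\g)$, and the centre is nontrivial.

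For the equivalence, the forward implication is routine: quotients of nilpotent algebras are nilpotent. Writing $\pi$ for the quotient map $\g \to \g/Z(\g)$, I would show by induction that $\ZZZ_i(\g/Z(\g)) = \pi(\ZZZ_i(\g))$, using that $\pi$ is a homomorphism for both brackets and hence commutes with forming commutators. Thus $\ZZZ_n(\g) = 0$ implies $\ZZZ_n(\g/Z(\g)) = 0$, so $\g/Z(\g)$ is nilpotent. Alternatively this is immediate from the iterated-product characterisation, since any vanishing product of $n+1$ elements remains zero in the quotient.

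The substantive direction is the converse, for which I would work with the upper central series. The key step is the identification $\ZZZ^i(\g/Z(\g)) = \ZZZ^{i+1}(\g)/Z(\g)$ for all $i \geq 0$, proved by induction on $i$. The base case holds because $\ZZZ^1(\g) = Z(\g)$. For the inductive step I would apply the third isomorphism theorem, valid for compatible Lie algebras by the discussion in Section~\ref{Prelims}, to obtain $\bigl(\g/Z(\g)\bigr)\big/\ZZZ^i(\g/Z(\g)) \cong \g/\ZZZ^{i+1}(\g)$, and then compare centres of the two sides using the defining property of the upper central series. Granting this identity, if $\g/Z(\g)$ is nilpotent then the upper central series characterisation gives $\ZZZ^n(\g/Z(\g)) = \g/Z(\g)$ for some $n$, whence $\ZZZ^{n+1}(\g) = \g$, so $\g$ is nilpotent.

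The main obstacle is establishing the relation between the two upper central series cleanly: this rests on having the isomorphism theorems available and on verifying that passing to centres is compatible with the identification of quotients. I expect the latter to follow from Remark~\ref{rmkautcentre} together with the ideal-correspondence under the quotient map, after which the remainder of the argument is bookkeeping.
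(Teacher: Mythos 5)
Your proposal is correct, but note that the paper offers no proof of this lemma at all: it is stated as ``a direct consequence'' of the definition of nilpotency, so there is no argument to compare against line by line. Your reading of the first assertion is the right one and worth making explicit --- as literally stated it is false, since $(\fsl_2,\sqb{-,-},0)$ is a nonzero compatible Lie algebra with trivial centre, so the nilpotency hypothesis implicit in the surrounding text is genuinely needed. Your treatment of the first assertion (last nonzero term of the lower central series lies in $Z(\g)$) and of the forward implication (via $\ZZZ_i(\g/Z(\g))=\pi(\ZZZ_i(\g))$) is the natural, presumably intended argument. For the converse, however, your detour through the upper central series, the identity $\ZZZ^i(\g/Z(\g))=\ZZZ^{i+1}(\g)/Z(\g)$, and the third isomorphism theorem is much heavier than necessary: the lower-central-series identity you already proved for the forward direction finishes the converse in one line. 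Indeed, if $\ZZZ_n(\g/Z(\g))=0$, then $\pi(\ZZZ_n(\g))=0$, i.e.\ $\ZZZ_n(\g)\subseteq Z(\g)$, and hence
\[
\ZZZ_{n+1}(\g)=\dbl{\g,\ZZZ_n(\g)}\subseteq\dbl{\g,Z(\g)}=0,
\]
so $\g$ is nilpotent. This bypasses entirely the step you flag as the main obstacle --- checking that the identification of iterated quotients is compatible with passing to centres --- which, while doable, is exactly the kind of bookkeeping the one-line argument renders unnecessary.
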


It is clear that if $\g$ is a nilpotent compatible Lie algebra, then both $\underline{\g}$ and $\undertilde{\g}$ are nilpotent Lie algebras, as $\ZZZ_n(\underline\g),\ZZZ_n(\undertilde\g) \subseteq \ZZZ_n(\g)$.	The obvious question now is whether the converse holds.
A previous example provides a counterexample.

\begin{example}
	Let $\hh$ be a three-dimensional vector space generated by $x,y,z$. Define the following products:
	\begin{align*}
		\sqb{x,y}= z, \: z \in Z(\underline\hh), \text{ and} 
		\cb{x,z}= y, \: y \in Z(\undertilde\hh).
	\end{align*}
	Each of the component Lie algebras is a copy of the Heisenberg Lie algebra, which is nilpotent. But the centre of $\hh$ is zero, thus $\hh$ cannot be nilpotent.
\end{example}

\subsection{The classification problem}

Here, we present the definitions and results which will lead to a complete classification in low dimensions of the variety of nilpotent compatible Lie algebras. What follows is essentially a rephrasing of~\cite[Section 2]{NilpPoissonClass}, % using the language of compatible Lie algebras, 
generalising the method used by Skjelbred and Sund in~\cite{SkjelbredSund77} to algebras with two products.

\subsubsection*{Central extensions and cocycles}	

From this point forward, the cohomology of compatible Lie algebras will always be assumed to have trivial coefficients.

\begin{definition}
	A \emph{central extension} of a compatible Lie algebra $\g$ by $V$ is a short exact sequence of compatible Lie algebras of the form
	\begin{equation*}
		\begin{tikzcd}
			0 \arrow[r] & V \arrow[r,"i"] & \ee \arrow[r,"p"] & \g \arrow[r] & 0,
		\end{tikzcd}
	\end{equation*}
	where $i(V) \subseteq Z(\ee)$.
	In particular, $V$ is an abelian compatible Lie algebra and $\ee/i(V) \cong \g$.
\end{definition}

\begin{remark}\hfil
	\begin{enumerate}
		\item A central extension is a particular case of an abelian extension(cf.~\cite[Definition 4.3.]{MCCohomCLA}) where the action is trivial, that is, with $\rho$ and $\mu$ are both zero. % where we require that $i(V) \subseteq Z(\ee)$, instead of just requiring that $V$ has a trivial compatible Lie algebra structure.
		\item As vector spaces, $\ee \cong \g \oplus V$. %Sometimes we will abuse notation and talk about elements $x \in \g$ and $u \in V$ as being in $\ee$ instead of denoting the elements by $(x,0)$ and $(0,u)$.
		We will interchangeably use the notation for external or internal direct sum. Thus, an element in $\g \oplus V$ can be written both as $(x,u)$ or $x+u$, with $x\in \g$ and $u \in V$.
	\end{enumerate}
\end{remark}

We may use $2$-cocycles to define central extensions (more generally, $2$-cocycles with non-trivial coefficients can be used to define abelian extensions, cf.~\cite[Section 4.2]{MCCohomCLA}). When we restrict to the nilpotent case, this will be the basis for our classification method.

\begin{definition}\label{cextdef}
	Given a compatible Lie algebra $\g$ and a cocycle $\omega \in Z^2(\g,V)$, we can construct a central extension of $\g$ by $V$ as follows. We set $\g_\omega = \g \oplus V$ and define the products
	\[
	\sqb{(x,u),(y,v)}_\omega = \nb{\sqb{x,y},\underline{\omega}(x,y)},\]
	\[\cb{(x,u),(y,v)}_\omega = \nb{\cb{x,y},\undertilde{\omega}(x,y)},
	\]
	where $x,y \in \g$ and $u,v \in V$. We further take $i:V \rightarrow \g_\omega$ to be the inclusion and $p:\g_\omega \rightarrow \g$ to be the projection, with respect to the decomposition $\g_\omega=\g\oplus V$.
\end{definition}

From the definition, it is clear that indeed $\g_\omega$ is a compatible Lie algebra: the Jacobi identities result from the cocycle identities (see Proposition~\ref{cocycleids}). Moreover, it is also trivial to check that $V \subseteq Z(\g_\omega)$. We can refine this by introducing the notion of the annihilator of a cocycle.
\medskip

\begin{definition}\label{extensioncentre}
	For a cocycle $\omega=(\underline\omega,\undertilde\omega) \in Z^2(\g,V)$, we define $\ann(\omega) = \ann(\underline\omega) \cap \ann(\undertilde\omega)$, where, for an alternating bilinear map $\beta:\g \times \g \rightarrow V$, $\ann(\beta)=\cb{x \in \g \mid \beta(x,y)=0, \: \forall y \in \g}$. Then, $Z(\g_\omega)=(Z(\g)\cap \ann(\omega))\oplus V$. We will use this fact throughout.
\end{definition}

\begin{definition}
	If a central extension $\ee$ of $\g$ by $V$ is such that $i(V)=Z(\ee)$, we say it is a \emph{full} central extension.
\end{definition}
Thus, the central extension $\g_\omega$ is full if and only if $Z(\g)\cap \ann(\omega)=0$. If this holds, we say the cocycle $\omega$ is \emph{admissible}.

The next proposition shows that any nonzero nilpotent compatible Lie algebra can be seen as a full central extension of a smaller one.

\begin{proposition}\label{nilpext1}
	Let $\g$ be a compatible Lie algebra. Then there always exists a compatible Lie algebra $\hh$ such that $\g$ is a full central extension of $\hh$, and moreover, $\hh$ is unique up to isomorphism. %by a cocycle of $\hh$, and further, $\hh$ is unique up to isomorphism.
\end{proposition}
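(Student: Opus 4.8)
The plan is to construct $\hh$ as the quotient of $\g$ by a suitably chosen one-dimensional central subspace, and to show that this $\hh$ witnesses $\g$ as a full central extension. The statement as phrased (``there always exists'') hides the genuinely substantive hypothesis: it must be understood for $\g$ nonzero and nilpotent, since Lemma~\ref{smallernilp} guarantees $Z(\g) \neq 0$ in that case, and it is precisely the nontriviality of the centre that powers the argument. So first I would fix a nonzero nilpotent $\g$, invoke Lemma~\ref{smallernilp} to get $Z(\g) \neq 0$, and pick any one-dimensional subspace $W \subseteq Z(\g)$. Setting $\hh = \g / W$, the quotient is a compatible Lie algebra (quotients by ideals are well defined, and $W$, lying in the centre, is certainly an ideal), and the canonical sequence
\begin{equation*}
	\begin{tikzcd}
		0 \arrow[r] & W \arrow[r] & \g \arrow[r] & \g/W \arrow[r] & 0
	\end{tikzcd}
\end{equation*}
is a central extension because $W \subseteq Z(\g)$.

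The more delicate point is arranging that the extension be \emph{full}, i.e.~that the image of $W$ equals all of $Z(\g/W)$, not merely sits inside it. In general $Z(\g/W)$ can be strictly larger than the image of $Z(\g)$, so a naive one-dimensional $W$ need not work. The fix is to choose $W$ more carefully inside $Z(\g)$. Concretely, I would consider the second centre $\ZZZ^2(\g)$ and the quotient $\ZZZ^2(\g)/Z(\g)$; elements of $\g$ whose image is central in $\g/W$ are exactly those in $\ZZZ^2(\g)$ modulo the relation imposed by $W$. Using nilpotency (which forces $\ZZZ^2(\g) \supsetneq Z(\g)$ unless $\g = Z(\g)$) together with a dimension/complement argument, I would select $W$ to be a one-dimensional subspace of $Z(\g)$ that admits a complementary subspace $U$ with $Z(\g) = W \oplus U$ and such that no element of $\ZZZ^2(\g) \setminus Z(\g)$ becomes central after quotienting. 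Translating this into cocycle language via Definition~\ref{cextdef} and Definition~\ref{extensioncentre}, fullness is the condition $Z(\hh) \cap \ann(\omega) = 0$ for the cocycle $\omega$ representing the extension, so the task reduces to choosing the central complement so that the associated cocycle is admissible.

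For uniqueness up to isomorphism, I would argue that any full central extension $\ee$ of some $\hh'$ that reproduces $\g$ must have $i(V) = Z(\ee)$ by definition of fullness, whence $\hh' \cong \ee / Z(\ee) \cong \g / Z(\g)$. In other words, fullness pins down the kernel as the \emph{entire} centre, so $\hh$ is forced to be $\g / Z(\g)$ regardless of the presentation, giving uniqueness. This also suggests the cleanest route to existence: take $\hh = \g / Z(\g)$ outright rather than quotienting by a one-dimensional piece, so that the kernel is the full centre and the extension is automatically full. The sequence $0 \to Z(\g) \to \g \to \g/Z(\g) \to 0$ is central, and fullness amounts to checking that $Z(\g/Z(\g))$ pulls back to exactly $Z(\g)$ inside $\g$ — which is the standard fact that the preimage of the centre of $\g/Z(\g)$ is the second centre $\ZZZ^2(\g)$, combined with the observation that the \emph{extension-theoretic} centre (via $\ann(\omega)$) is governed by $Z(\g) \cap \ann(\omega)$.

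The main obstacle I anticipate is the fullness verification: showing $Z(\g) \cap \ann(\omega) = 0$ for the chosen $V$. This is where one must be careful, because in the compatible setting the centre is the intersection $Z(\underline\g) \cap Z(\undertilde\g)$ (Definition of the centre) and the annihilator is likewise an intersection $\ann(\underline\omega) \cap \ann(\undertilde\omega)$, so the bookkeeping involves both brackets simultaneously rather than a single one. I would handle this by taking $V = Z(\g)$ and the canonical cocycle of the extension $0 \to Z(\g) \to \g \to \g/Z(\g) \to 0$, and then verifying directly from Definition~\ref{extensioncentre} that $Z(\g/Z(\g)) \cap \ann(\omega)$ vanishes, using that an element of $\g$ centralising everything modulo $Z(\g)$ \emph{and} annihilating the cocycle must already lie in $Z(\g)$, hence maps to $0$ in $\hh$. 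This last implication is the only step requiring real (if short) computation; the remainder is formal once the correct $\hh = \g/Z(\g)$ is identified.
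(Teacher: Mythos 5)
Your final route --- take $\hh=\g/Z(\g)$ with the canonical sequence $0 \to Z(\g) \to \g \to \g/Z(\g) \to 0$, and get uniqueness because fullness forces the kernel of any such extension to be all of $Z(\ee)=Z(\g)$, whence $\hh'\cong\g/Z(\g)$ --- is exactly the paper's proof, and it is correct. However, two of your framing claims should be corrected. First, the proposition does \emph{not} need $\g$ nonzero and nilpotent: the canonical sequence is a full central extension for an arbitrary compatible Lie algebra, including when $Z(\g)=0$ (the definition of central extension places no nonzero requirement on $V$). Nilpotency and Lemma~\ref{smallernilp} enter only in the remark \emph{after} the proposition, to guarantee $\dim\hh<\dim\g$ so the inductive classification can proceed; building them into the statement means you prove strictly less than what is asserted. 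Second, the fullness verification you flag as ``the main obstacle'' is vacuous for this extension: the middle term is $\g$ itself, the kernel is $Z(\g)$, and $i(V)=Z(\g)=Z(\ee)$ holds by construction --- no cocycle and no annihilator computation are involved. The condition $Z(\hh)\cap\ann(\omega)=0$ is the fullness criterion for extensions presented in the form $\hh_\omega$, and producing that presentation (choosing a linear splitting and extracting a cocycle) is the content of the separate Theorem~\ref{nilpext} and the remark following it, not of this proposition. Conflating the two is what generated your opening detour through one-dimensional central quotients $\g/W$, which, as you yourself noted, would in general fail to be full and is rightly abandoned.
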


\begin{proof}
	The following is a full central extension 
	\begin{equation*}
		\begin{tikzcd}
			0 \arrow[r] & Z(\g) \arrow[r,"i"] & \g \arrow[r,"p"] & \hh:=\g/Z(\g) \arrow[r] & 0.
		\end{tikzcd}
	\end{equation*}
	Moreover, given another full central extension
	\begin{equation*}
		\begin{tikzcd}
			0 \arrow[r] & Z(\g) \arrow[r,"i'"] & \g \arrow[r,"p'"] & \hh' \arrow[r] & 0,
		\end{tikzcd}
	\end{equation*}
	exactness ensures that $\hh'\cong \g/Z(\g)=\hh$.
\end{proof}

In particular, if $\g$ is nonzero, finite-dimensional, and nilpotent, then by Lemma~\ref{smallernilp} $Z(\g)\neq 0$, so $\hh = \g/Z(\g)$ is nilpotent and $\dim \hh < \dim \g$.

\begin{theorem}\label{nilpext}
	Let $\g$ be a nilpotent compatible Lie algebra and $\hh=\nolinebreak\g/Z(\g)$. %
	Then the extension of Proposition~\ref{nilpext1} can be seen as an extension $\hh_\omega$ by a cocycle $\omega\in Z^2(\hh,Z(\g))$.	
\end{theorem}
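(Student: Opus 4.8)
The plan is to realize the extension of Proposition~\ref{nilpext1} as $\hh_\omega$ by producing a cocycle that records how far a chosen vector-space splitting of $p$ is from being a homomorphism for each of the two brackets.

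First I would fix a linear section $s:\hh \to \g$ of the projection $p:\g \to \hh=\g/Z(\g)$, so that $p\circ s = \id_\hh$ and $\g = s(\hh)\oplus Z(\g)$ as vector spaces. Writing $V=Z(\g)$, I would then define two alternating bilinear maps $\underline\omega,\undertilde\omega:\hh\times\hh\to V$ by
\[
\underline\omega(\bar x,\bar y)=\sqb{s(\bar x),s(\bar y)}_\g-s(\sqb{\bar x,\bar y}_\hh),\qquad
\undertilde\omega(\bar x,\bar y)=\cb{s(\bar x),s(\bar y)}_\g-s(\cb{\bar x,\bar y}_\hh).
\]
Since $p$ is a homomorphism for both brackets, applying $p$ to either right-hand side yields $0$, so both maps take values in $\ker p = V = Z(\g)$, which is exactly the coefficient space. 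Hence $\omega:=(\underline\omega,\undertilde\omega)$ is a candidate element of $Z^2(\hh,Z(\g))$.

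Next I would check that $\omega$ satisfies the three identities of Proposition~\ref{cocycleids}. For the first identity, substituting the definition into the cyclic sum $\underline\omega(\sqb{\bar x,\bar y}_\hh,\bar z)+(\text{cyc})$ and replacing each $s(\sqb{\bar x,\bar y}_\hh)$ by $\sqb{s(\bar x),s(\bar y)}_\g$ modulo $V$ — which is harmless inside an outer bracket precisely because $V$ is central — reduces the bracket contributions to the Jacobi identity for $\sqb{-,-}_\g$ on $s(\bar x),s(\bar y),s(\bar z)$, while the remaining $s(\cdots)$ terms vanish because $\hh$ is itself a Lie algebra; both sums are therefore zero. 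The second identity is identical with $\cb{-,-}$ in place of $\sqb{-,-}$, and the third, mixed, identity follows the same way from the mixed Jacobi identity~\eqref{mixedjacobi} for $\g$.

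Finally I would exhibit the isomorphism of central extensions $\varphi:\hh_\omega\to\g$ given by $\varphi(\bar x,u)=s(\bar x)+u$, which is a linear bijection by the decomposition $\g=s(\hh)\oplus V$. Unfolding $\sqb{(\bar x,u),(\bar y,v)}_\omega=(\sqb{\bar x,\bar y}_\hh,\underline\omega(\bar x,\bar y))$ and applying $\varphi$, the term $s(\sqb{\bar x,\bar y}_\hh)$ cancels against the one hidden in $\underline\omega$, leaving $\sqb{s(\bar x),s(\bar y)}_\g=\sqb{\varphi(\bar x,u),\varphi(\bar y,v)}_\g$, the last equality using $u,v\in Z(\g)$; the computation for $\cb{-,-}$ is the same. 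One checks that $\varphi$ intertwines the inclusion and projection maps of the two extensions, so $\varphi$ is an isomorphism of central extensions. I expect the main obstacle to be the bookkeeping in the mixed cocycle identity: one must verify that the cross terms involving both brackets recombine correctly and that every $s(\text{bracket})$ contribution cancels cyclically, the centrality of $V$ being exactly what guarantees these cancellations so that the identity reduces cleanly to~\eqref{mixedjacobi}.
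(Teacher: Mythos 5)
Your proposal is correct and follows essentially the same route as the paper's proof: a linear section $s$ of the projection (the paper's $j$), the same defect cocycle $\underline\omega(\bar x,\bar y)=\sqb{s(\bar x),s(\bar y)}_\g-s(\sqb{\bar x,\bar y}_\hh)$ with values in $Z(\g)$ verified by applying $p$, and the same isomorphism $(\bar x,u)\mapsto s(\bar x)+u$ between $\hh_\omega$ and $\g$. The only difference is cosmetic: you spell out the verification of the cocycle identities (centrality of $Z(\g)$ allowing the substitution inside outer brackets, then Jacobi in $\g$ and $\hh$), which the paper leaves as a remark.
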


\begin{proof}
	In this proof, unadorned products will always refer to $\g$.
	
	Consider the extension $\begin{tikzcd}
		0 \arrow[r] & Z(\g) \arrow[r,"i"] & \g \arrow[r,"p"] & \hh \arrow[r] & 0,
	\end{tikzcd}$ where $i$ is the inclusion and $p$ the canonical epimorphism.
	
	Fix a vector space decomposition $\g=\hh'\oplus Z(\g)$ such that $p_{\mid \hh'}: \hh' \rightarrow \hh$ is a vector space isomorphism. Let $j:={p_{\mid \hh'}}^{-1}$ and define a cocycle $\omega=(\underline{\omega},\undertilde{\omega})\in Z^2(\hh,Z(\g))$ by
	\begin{align*}
		\underline{\omega}(x,y)&=\sqb{j(x),j(y)}-j(\sqb{x,y}_\hh), \\
		\undertilde{\omega}(x,y)&=\cb{j(x),j(y)}-j(\cb{x,y}_\hh),
	\end{align*}
	where $x,y \in \hh$. These maps have values in $\g$, but note that
	\[
	p(\underline{\omega}(x,y))=\sqb{p\circ j(x),p\circ j(y)}_\hh-p\circ j(\sqb{x,y}_\hh)=\sqb{x,y}_\hh-\sqb{x,y}_\hh=0,
	\]
	and thus $\underline{\omega}(x,y)\in Z(\g)$; similarly, $\undertilde{\omega}(x,y)\in Z(\g)$.
	
	The fact that $\omega=(\underline{\omega},\undertilde{\omega})$ is alternating and satisfies the cocycles identities follows from the Jacobi identities in $\g$ and $\hh$ and the linearity of $j$.
	
	We now construct $\hh_\omega$. 
	For $x,y \in \hh$ and $u,v \in Z(\g)$, we have
	\begin{align*}
		\sqb{(x,u),(y,v)}_\omega &= \sqb{x,y}_{\hh}+\underline{\omega}(x,y)= \sqb{x,y}_{\hh} +\sqb{j(x),j(y)}-j(\sqb{x,y}_\hh),
	\end{align*}
	and similarly for $\cb{-,-}_\omega$.
	
	Consider the vector space isomorphism 
	\begin{align*}
		\Phi := j \oplus \id_{Z(\g)}  \colon \hh_\omega = \hh\oplus Z(\g) & \longrightarrow \g = \hh'\oplus Z(\g)\\
		(x,u) & \longmapsto j(x)+u.
	\end{align*}
	Then, 
	\begin{align*}
		\Phi\nb{\sqb{(x,u),(y,v)}_\omega} &= \Phi\big(\underbrace{\sqb{x,y}_\hh}_{\in \hh}+\underbrace{\sqb{j(x),j(y)}-j(\sqb{x,y}_\hh)}_{\in Z(\g)}\big)\\
		&=j(\sqb{x,y}_\hh)+\sqb{j(x),j(y)}-j(\sqb{x,y}_\hh)= \sqb{j(x),j(y)} \\
		&=\sqb{j(x)+u,j(y)+v}= \sqb{\Phi((x,u)),\Phi((y,v))}.
	\end{align*}
	And similarly for $\cb{-,-}$, so $\Phi$ is an isomorphism of compatible Lie algebras.
\end{proof}

\begin{remark}
	In the setting of Theorem~\ref{nilpext}, \[Z(\g)=Z(\hh_\omega)=\nb{Z(\hh)\cap \ann \omega}\oplus Z(\g), \] %by Remark~\ref{extensioncentrefull},   %$Z(\hh_\omega)=Z(\g)\oplus\left(Z(\hh)\cap \ann(\omega) \right)$ and thus 
	so $Z(\hh)\cap\ann(\omega)=0$.
\end{remark}

The uniqueness of $\hh$ in Proposition~\ref{nilpext1} can be rephrased as follows.

\begin{theorem}\label{nilpext2}
	Let $\g_\omega$ and $\hh_\theta$ be isomorphic full central extensions of nilpotent compatible Lie algebras $\g$ and $\hh$. Then, $\g\cong\hh$. In particular, the spaces $Z^2(\g,Z(\g_\omega))$ and $Z^2(\hh,Z(\hh_\theta))$ are isomorphic and can be identified.
\end{theorem}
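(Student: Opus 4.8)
The plan is to read off the result from the uniqueness in Proposition~\ref{nilpext1}, made explicit via the defining property of \emph{full} central extensions together with the fact, recorded in Remark~\ref{rmkautcentre}, that isomorphisms preserve centres; no further extension-theoretic input is required. Write $V=Z(\g_\omega)$ and $W=Z(\hh_\theta)$ for the coefficient spaces. Since $\g_\omega$ is a \emph{full} central extension of $\g$ by $V$, the kernel of its projection onto $\g$ is exactly $Z(\g_\omega)$, so exactness yields a canonical isomorphism $\g\cong\g_\omega/Z(\g_\omega)$ of compatible Lie algebras; symmetrically $\hh\cong\hh_\theta/Z(\hh_\theta)$.

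First I would take the given isomorphism $\psi\colon\g_\omega\stackrel{\cong}{\longrightarrow}\hh_\theta$. By Remark~\ref{rmkautcentre} it sends $Z(\g_\omega)$ onto $Z(\hh_\theta)$, hence descends to an isomorphism $\bar\psi\colon\g_\omega/Z(\g_\omega)\stackrel{\cong}{\longrightarrow}\hh_\theta/Z(\hh_\theta)$ on the quotients. Composing with the two canonical identifications above produces a compatible Lie algebra isomorphism
\[
\phi\colon\ \g\;\xrightarrow{\ \sim\ }\;\g_\omega/Z(\g_\omega)\;\xrightarrow{\ \bar\psi\ }\;\hh_\theta/Z(\hh_\theta)\;\xrightarrow{\ \sim\ }\;\hh,
\]
which is the first assertion $\g\cong\hh$.

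For the identification of the cocycle spaces I would use the two isomorphisms already in hand: the compatible Lie algebra isomorphism $\phi\colon\g\to\hh$, and the linear isomorphism $\sigma:=\psi|_V\colon V\stackrel{\cong}{\longrightarrow}W$ obtained by restricting $\psi$ to the centres. Define the transport $\xi\mapsto\sigma^{-1}\circ\xi\circ(\phi\times\phi)$, carrying a pair $\xi=(\underline{\xi},\undertilde{\xi})$ of alternating bilinear maps $\hh\times\hh\to W$ to a pair of alternating bilinear maps $\g\times\g\to V$. Since $\phi$ preserves \emph{both} brackets and $\sigma$ is linear, substituting $\phi(x),\phi(y),\phi(z)$ into the three cocycle conditions of Proposition~\ref{cocycleids} for $\xi$ and applying $\sigma^{-1}$ shows that the image again satisfies those conditions; thus the transport restricts to a linear bijection $Z^2(\hh,W)\to Z^2(\g,V)$, with inverse $\omega\mapsto\sigma\circ\omega\circ(\phi^{-1}\times\phi^{-1})$. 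This provides the claimed identification of $Z^2(\g,Z(\g_\omega))$ with $Z^2(\hh,Z(\hh_\theta))$.

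I do not expect a substantive obstacle, as the argument is purely formal once centres are known to be preserved. The only point meriting attention is that the cocycle conditions survive transport: the first two are inherited because $\phi$ respects $\sqb{-,-}$ and $\cb{-,-}$ separately, while the third, mixed condition is inherited precisely because $\phi$ respects both brackets \emph{simultaneously} --- that is, because $\phi$ is an isomorphism of the \emph{compatible} Lie algebra, not merely of one component.
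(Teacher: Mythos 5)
Your proof is correct and follows essentially the same route as the paper, which presents this theorem as a rephrasing of the uniqueness in Proposition~\ref{nilpext1}: fullness identifies $\g$ with $\g_\omega/Z(\g_\omega)$ and $\hh$ with $\hh_\theta/Z(\hh_\theta)$, and the given isomorphism descends to these quotients because it preserves centres (Remark~\ref{rmkautcentre}). Your explicit transport of cocycles via $\phi$ and $\sigma=\psi|_{V}$ is the natural way to substantiate the ``in particular'' clause, which the paper leaves implicit.
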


This sets us up perfectly to attempt a classification problem. Combining this result with Lemma~\ref{smallernilp}, we have the following corollaries.

\begin{cor}\label{corsmaller}
	Any nonzero finite-dimensional nilpotent compatible Lie algebra can be seen as a full central extension by cocycles of a nilpotent compatible Lie algebra of strictly smaller dimension.
\end{cor}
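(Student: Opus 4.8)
The plan is to assemble this statement directly from the three results established immediately above, with essentially no new computation required; it is purely a matter of chaining the implications in the correct order. Let $\g$ be a nonzero finite-dimensional nilpotent compatible Lie algebra. First I would invoke Lemma~\ref{smallernilp}: since $\g\neq 0$, its centre $Z(\g)$ is non-trivial, and the same lemma guarantees that the quotient $\hh:=\g/Z(\g)$ is again nilpotent. Thus $\hh$ is a legitimate candidate for the ``smaller'' algebra.

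Next I would check the strict drop in dimension, which is the only place where the two hypotheses (nonzeroness and nilpotency) are genuinely used. Because $Z(\g)\neq 0$, we have $\dim\hh=\dim\g-\dim Z(\g)<\dim\g$, so $\hh$ is a nilpotent compatible Lie algebra of strictly smaller dimension. Note that finite-dimensionality is what makes this dimension count meaningful.

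Finally I would realise $\g$ as the required extension. Proposition~\ref{nilpext1} already exhibits $\g$ as a full central extension of $\hh=\g/Z(\g)$, and Theorem~\ref{nilpext} upgrades this abstract extension to the explicit form $\hh_\omega$ for a suitable cocycle $\omega=(\underline\omega,\undertilde\omega)\in Z^2(\hh,Z(\g))$. Combining these, $\g\cong\hh_\omega$ is a full central extension by a cocycle of a nilpotent compatible Lie algebra of strictly smaller dimension, which is exactly the assertion.

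Since every ingredient is already in place, I do not anticipate any genuine obstacle; the corollary is really a bookkeeping consequence of the preceding machinery. The one subtlety worth flagging is that the strict inequality $\dim\hh<\dim\g$ rests precisely on $Z(\g)\neq 0$, which in turn requires \emph{both} $\g\neq 0$ \emph{and} nilpotency: for a non-nilpotent compatible Lie algebra the centre can vanish (as illustrated by the earlier example whose component algebras are Heisenberg but whose centre is zero), and in that case the reduction to a strictly smaller algebra would fail.
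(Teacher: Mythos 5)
Your proposal is correct and follows essentially the same route as the paper: the corollary is stated there as an immediate combination of Lemma~\ref{smallernilp} (nontrivial centre and nilpotency of the quotient), Proposition~\ref{nilpext1} (realising $\g$ as a full central extension of $\g/Z(\g)$), and Theorem~\ref{nilpext} (expressing that extension as $\hh_\omega$ for a cocycle $\omega\in Z^2(\hh,Z(\g))$), with the strict dimension drop noted in the paper's remark following Proposition~\ref{nilpext1}. Your flag that $Z(\g)\neq 0$ genuinely needs both nonzeroness and nilpotency is a worthwhile observation that matches the paper's own counterexample with vanishing centre.
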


Iterating this, we obtain the following.

\begin{cor}
	Any finite-dimensional nilpotent compatible Lie algebra can be obtained via a series of full central extensions by cocycles starting with the compatible Lie algebra of dimension zero. 
\end{cor}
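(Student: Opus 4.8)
The plan is to prove the statement by a straightforward induction on $n=\dim\g$, using the previous result (Corollary~\ref{corsmaller}) as the engine of the inductive step. The point is that Corollary~\ref{corsmaller} already does all the real work: it realises any nonzero nilpotent compatible Lie algebra as a full central extension by cocycles of a \emph{nilpotent} one of strictly smaller dimension. What remains is simply to iterate this and check that the iteration terminates.

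For the base case I would take $n=0$: the compatible Lie algebra of dimension zero is vacuously obtained, being itself the starting point of the (empty) series. For the inductive step I would assume the statement for all nilpotent compatible Lie algebras of dimension strictly less than $n$, and let $\g$ be nilpotent with $\dim\g=n>0$. Applying Corollary~\ref{corsmaller} yields a nilpotent compatible Lie algebra $\hh$ with $\dim\hh<n$ and a cocycle $\omega$ such that $\g=\hh_\omega$ is a full central extension of $\hh$. Since the corollary guarantees that $\hh$ is again nilpotent, the inductive hypothesis produces a chain
\[
0=\hh_0,\ \hh_1,\ \ldots,\ \hh_k=\hh,
\]
in which each $\hh_i$ is a full central extension by cocycles of $\hh_{i-1}$. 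I would then append the extension $\g=\hh_\omega$ to the end of this chain, obtaining the required series that realises $\g$ starting from the zero-dimensional algebra.

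I do not expect any genuine obstacle here, as the argument is essentially bookkeeping. The only point deserving attention is that the intermediate algebra $\hh$ must remain nilpotent in order for the inductive hypothesis to apply; this is exactly what Corollary~\ref{corsmaller} supplies (ultimately via Lemma~\ref{smallernilp}). Because the dimension drops strictly at each stage, the recursion terminates after finitely many steps, which both guarantees that the induction is well-founded and that the resulting series of extensions is finite.
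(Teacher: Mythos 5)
Your proof is correct and follows exactly the paper's approach: the paper obtains this corollary simply by iterating Corollary~\ref{corsmaller}, which is precisely the induction on dimension you spell out. Your added care about nilpotency of the intermediate quotients and strict dimension drop is the same (implicit) justification the paper relies on.
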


For a classification result, we need to be able to determine whether two extensions of the same algebra are isomorphic. 
Once we have a suitable criterion or method, we are able to classify nilpotent compatible Lie algebras up to a target dimension by starting with the compatible Lie algebra of dimension zero and extending successively via full central extensions.

This motivates the following results.

\subsubsection*{Cohomology, actions and orbits}

% Now, we lay out the tools that will allow us to remedy the 

\begin{lemma}\label{cohomiso}
	Let $\g$ be a compatible Lie algebra and $V$ a vector space. Given two cocycles $\omega, \theta \in Z^2(\g,V)$ such that $\sqb{\omega}=\sqb{\theta}$, we have $\g_\omega \cong \g_\theta$.
\end{lemma}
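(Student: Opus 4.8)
The plan is to produce an explicit isomorphism between the two extensions by exploiting the fact that cohomologous cocycles differ by a coboundary. First I would unpack the hypothesis $\sqb{\omega}=\sqb{\theta}$: since $\omega$ and $\theta$ represent the same cohomology class, their difference $\omega-\theta$ is a $2$-coboundary, so by Proposition~\ref{cocycleids} there is a linear map $\varphi:\g\rightarrow V$ with $\omega-\theta=\delta\varphi$, that is,
\[
\underline{\omega}(x,y)-\underline{\theta}(x,y)=\varphi(\sqb{x,y}),
\qquad
\undertilde{\omega}(x,y)-\undertilde{\theta}(x,y)=\varphi(\cb{x,y}),
\]
for all $x,y\in\g$. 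This single linear map $\varphi$ is the only piece of data from which the isomorphism will be built, and the crucial feature is that it governs \emph{both} components at once.

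The key step is to define the candidate map $\Psi:\g_\omega\rightarrow\g_\theta$ on the common underlying space $\g\oplus V$ by
\[
\Psi(x,u)=(x,\,u-\varphi(x)),
\]
which shifts the central part by $\varphi$ while leaving the $\g$-component untouched. This is manifestly linear and bijective, with inverse $(x,u)\mapsto(x,u+\varphi(x))$, so the only thing left to verify is that $\Psi$ intertwines both products simultaneously. For the first product I would compute, on the one hand,
\[
\Psi\nb{\sqb{(x,u),(y,v)}_\omega}=\Psi\nb{\sqb{x,y},\underline{\omega}(x,y)}=\nb{\sqb{x,y},\underline{\omega}(x,y)-\varphi(\sqb{x,y})},
\]
and on the other hand,
\[
\sqb{\Psi(x,u),\Psi(y,v)}_\theta=\sqb{(x,u-\varphi(x)),(y,v-\varphi(y))}_\theta=\nb{\sqb{x,y},\underline{\theta}(x,y)};
\]
these agree precisely because of the first coboundary relation above. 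The identical computation with $\cb{-,-}$ in place of $\sqb{-,-}$, using the second coboundary relation, shows that $\Psi$ respects the second product as well, so $\Psi$ is an isomorphism of compatible Lie algebras and $\g_\omega\cong\g_\theta$.

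I do not expect a genuine obstacle here: this is the two-bracket analogue of the classical statement that cohomologous cocycles yield equivalent central extensions, and the argument is essentially a bookkeeping verification. The only points requiring care are matching the sign convention in the definition of $\Psi$ to the direction of the coboundary $\omega-\theta=\delta\varphi$, and confirming that the same $\varphi$ simultaneously corrects both the $\underline{\omega}$ and $\undertilde{\omega}$ components — which is exactly what the form of a $2$-coboundary in Proposition~\ref{cocycleids} guarantees. It is worth noting that $\Psi$ restricts to the identity on $V$ and commutes with the projections onto $\g$, so it is in fact an equivalence of central extensions in the strong sense, not merely an abstract isomorphism.
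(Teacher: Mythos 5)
Your proof is correct, but it takes a different route from the paper. The paper disposes of this lemma in one line by citing the external result that $H^2(\g,V)$ classifies equivalence classes of central extensions (Theorem 4.4 of the cited cohomology paper of Liu, Sheng and Bai), whereas you construct the isomorphism by hand: writing $\omega-\theta=\delta\varphi$ and defining $\Psi(x,u)=(x,\,u-\varphi(x))$. Your computation is accurate --- since the products in $\g_\omega$ and $\g_\theta$ ignore the $V$-components of the arguments, the verification reduces exactly to the two coboundary relations, and the single map $\varphi$ handles both brackets simultaneously, just as the form of a $2$-coboundary in Proposition~\ref{cocycleids} dictates. What your approach buys is self-containedness and extra information: you exhibit an explicit equivalence of extensions (identity on $V$, compatible with the projections to $\g$), which is in effect a proof of the relevant direction of the classification theorem the paper invokes, specialized to trivial coefficients. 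What the paper's approach buys is brevity and the placement of this fact in its proper general context; your construction is essentially what sits inside the black box being cited.
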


\begin{proof}
	This is immediate from the fact that $H^2(\g,V)$ classifies equivalence classes of central extensions (see~\cite[Theorem 4.4]{MCCohomCLA}).
\end{proof}

In spite of Lemma~\ref{cohomiso}, two non-equivalent central extensions may yield isomorphic compatible Lie algebras. Our next aim is to find a partial converse of Lemma~\ref{cohomiso}. 

Simple direct computations prove the following result.

\begin{proposition}
	Let $\g$ be a compatible Lie algebra, $V$ a vector space, and let $\Aut(\g)$ denote the group of automorphisms of $\g$ and $\GL(V)$ denote the group of automorphisms of $V$. Then there is a right action of $\Aut(\g)$ on $Z^2(\g,V)$ and a left action of $\GL(V)$ on $Z^2(\g,V)$ given by
	\begin{align}
		(\omega \phi)(x,y)&=\omega\nb{\phi(x),\phi(y)},\label{actaut} \\
		%(f\omega)(x,y)=\nb{f\nb{\underline\omega\nb{x,y}},f\nb{\undertilde\omega\nb{x,y}}},
		f \omega &= (f \circ \underline{\omega}, f \circ \undertilde{\omega}),\label{actgl}
	\end{align}
	respectively, where $\omega \in Z^2(\g,V)$,  $\phi \in \Aut(\g)$, $f \in \GL(V)$  and $x,y \in \g$. Moreover, these actions satisfy $f(\omega \phi)=(f \omega)\phi$ and are induced to $H^2(\g,V)$.
\end{proposition}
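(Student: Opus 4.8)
The plan is to verify each of the four claims in the proposition by direct computation, since these are all routine verifications of group-action axioms and compatibility with the cohomological structure. The proposition asserts: (a) equation~\eqref{actaut} defines a right action of $\Aut(\g)$, (b) equation~\eqref{actgl} defines a left action of $\GL(V)$, (c) the two actions commute in the sense $f(\omega\phi)=(f\omega)\phi$, and (d) both actions descend to $H^2(\g,V)$. Throughout, one must remember that a cocycle $\omega=(\underline\omega,\undertilde\omega)$ is a \emph{pair} of alternating bilinear maps satisfying the three identities of Proposition~\ref{cocycleids}, so every claim really has two components (one for $\underline\omega$, one for $\undertilde\omega$) plus the mixed identity to check.

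First I would confirm that each formula actually lands back in $Z^2(\g,V)$. For \eqref{actaut}, precomposing with $\phi\in\Aut(\g)$ preserves alternating bilinearity, and since $\phi$ is a compatible Lie algebra automorphism it respects both brackets, i.e.\ $\phi(\sqb{x,y})=\sqb{\phi(x),\phi(y)}$ and likewise for $\cb{-,-}$; substituting into the three cocycle identities of Proposition~\ref{cocycleids} shows $\omega\phi$ again satisfies them. For \eqref{actgl}, postcomposing each component with a linear $f\in\GL(V)$ clearly preserves alternation and bilinearity, and because $f$ is linear it commutes with the sums appearing in the cocycle identities, so $f\omega\in Z^2(\g,V)$ as well. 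Then the action axioms follow formally: for the right action one checks $\omega\,\id=\omega$ and $\omega(\phi\psi)=(\omega\phi)\psi$ by evaluating both sides at $(x,y)$ and noting $(\phi\psi)(x)=\phi(\psi(x))$, which gives the correct right-action order; for the left action $\id\,\omega=\omega$ and $(fg)\omega=f(g\omega)$ follow from associativity of composition applied componentwise.

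Next I would verify the commuting relation $f(\omega\phi)=(f\omega)\phi$ by evaluating at an arbitrary pair $(x,y)$: the left side is $f(\omega(\phi(x),\phi(y)))=(f\circ\underline\omega(\phi(x),\phi(y)),\,f\circ\undertilde\omega(\phi(x),\phi(y)))$, while the right side applies $\phi$ first and then $f$ componentwise, yielding the identical expression since $f$ acts on the target $V$ and $\phi$ on the source $\g$, and these operations are independent. Finally, to show both actions are induced on $H^2(\g,V)$, I would check that each action sends coboundaries to coboundaries. Using the explicit form $\delta\varphi=(\varphi\circ\sqb{-,-},\varphi\circ\cb{-,-})$ from Proposition~\ref{cocycleids}, one computes $(\delta\varphi)\phi=(\varphi\circ\sqb{\phi(-),\phi(-)},\ldots)=(\varphi\circ\phi\circ\sqb{-,-},\ldots)=\delta(\varphi\circ\phi)$, and $f(\delta\varphi)=(f\circ\varphi\circ\sqb{-,-},\ldots)=\delta(f\circ\varphi)$; hence $B^2(\g,V)$ is stable under both actions, so they pass to the quotient $H^2=Z^2/B^2$.

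The computations are genuinely routine, so there is no deep obstacle; the only real care required is bookkeeping the \emph{pair} structure of cocycles and making sure the mixed (third) cocycle identity is preserved under \eqref{actaut}, since that identity couples $\underline\omega$ and $\undertilde\omega$ and is where a careless argument might slip. Concretely, I would double-check that $\phi$ being a homomorphism for \emph{both} brackets simultaneously is exactly what is needed for the mixed identity to survive, which is why the action is by compatible automorphisms rather than arbitrary linear maps. This is presumably why the authors relegate the proof to the phrase ``simple direct computations,'' and I would do the same after recording the two representative verifications above.
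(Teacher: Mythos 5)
Your proof is correct and is exactly the ``simple direct computations'' the paper invokes without writing out: the paper gives no further argument, and your verification of the action axioms, the commuting relation, and the stability of $B^2(\g,V)$ under both actions (so that they descend to $H^2(\g,V)$) supplies precisely the missing details. Your observation that $\phi$ preserving \emph{both} brackets is what keeps the mixed cocycle identity intact is the right point of care and matches the paper's framework.
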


\begin{remark}\!
	\begin{enumerate}[(i)]
		\item Fixing a basis for $\g$, the action of $\phi$ on $\omega$ is given by a change of basis induced by $\phi$. Thus, we may also compute $\omega\phi$ by doing the matrix computation $\phi^t\omega\phi$;
		\item The condition that a cocycle $\omega$ is admissible depends only on the $\Aut(\g)$-orbit of $\omega$, since automorphisms of $\g$ preserve $Z(\g)$. 
	\end{enumerate}
\end{remark}

These actions enable us to establish a criterion to ensure an isomorphism between extensions.

\begin{theorem}\label{maintheorem1}
	Let $\g$ be a compatible Lie algebra and $V$ a vector space. Let $\omega$ and $\theta$ be two cocycles of $Z^2(\g,V)$ such that the corresponding central extensions are full (or equivalently, $Z(\g_\omega)=Z(\g_\theta)=V$). Under these conditions, $\g_\omega\cong \g_\theta$ if and only if there exist $\phi \in \Aut(\g)$ and $f \in \GL(V)$ such that $ \sqb{\theta\phi} =\sqb{f\omega} $.

\end{theorem}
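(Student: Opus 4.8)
The plan is to prove both implications of this biconditional, which characterizes when two full central extensions are isomorphic. I would work with the explicit product formulas of Definition~\ref{cextdef} and the descriptions of the $\Aut(\g)$- and $\GL(V)$-actions in equations~\eqref{actaut} and~\eqref{actgl}. The key structural fact I would exploit throughout is that, because both extensions are full, Definition~\ref{extensioncentre} gives $Z(\g_\omega)=V=Z(\g_\theta)$; hence any isomorphism $\Psi\colon\g_\omega\to\g_\theta$ must map $V$ to $V$ (as isomorphisms preserve the centre, by Remark~\ref{rmkautcentre}), and therefore descends to an isomorphism of the quotients $\g_\omega/V\cong\g\cong\g_\theta/V$.

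For the backward direction, suppose we are given $\phi\in\Aut(\g)$ and $f\in\GL(V)$ with $[\theta\phi]=[f\omega]$ in $H^2(\g,V)$. The strategy is to build an explicit isomorphism $\g_\omega\to\g_{\theta\phi}$ and then invoke the earlier machinery. First, by Lemma~\ref{cohomiso}, the equality of cohomology classes $[\theta\phi]=[f\omega]$ gives $\g_{\theta\phi}\cong\g_{f\omega}$. Next I would check directly from the product formulas that the map $\g_{f\omega}\to\g_\omega$ given by $(x,u)\mapsto(x,f^{-1}(u))$ is an isomorphism of compatible Lie algebras: applying $\id_\g\oplus f^{-1}$ to the products $(\,[x,y],(f\circ\underline{\omega})(x,y)\,)$ returns exactly the $\g_\omega$-products, and the same computation holds verbatim for the $\{-,-\}$ product since the $\GL(V)$-action acts diagonally on both components of $\omega=(\underline\omega,\undertilde\omega)$. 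Finally, I would verify that $\phi\oplus\id_V\colon\g_{\theta\phi}\to\g_\theta$ is an isomorphism, using that $\phi\in\Aut(\g)$ together with the definition $(\theta\phi)(x,y)=\theta(\phi(x),\phi(y))$. Composing these three isomorphisms yields $\g_\omega\cong\g_\theta$.

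For the forward direction, assume $\Psi\colon\g_\omega\to\g_\theta$ is an isomorphism. Since it preserves the centre, $\Psi(V)=V$; set $f:=\Psi|_V\in\GL(V)$. Because $\Psi$ restricts to $V$ and $V$ sits in both copies of $\g\oplus V$, $\Psi$ induces a well-defined automorphism on the quotient $\g_\omega/V\cong\g\cong\g_\theta/V$; call the induced map $\phi\in\Aut(\g)$. The substantive computation is then to extract the cocycle relation: writing $\Psi(x,0)=(\phi(x),\tau(x))$ for some linear map $\tau\colon\g\to V$ (the ``off-diagonal'' part of $\Psi$) and $\Psi(0,u)=(0,f(u))$, I would apply $\Psi$ to both sides of the defining product $[(x,0),(y,0)]_\omega=([x,y],\underline\omega(x,y))$ and compare with the product computed in $\g_\theta$. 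Equating the $V$-components produces the identity $f(\underline\omega(x,y))=\underline{\theta}(\phi(x),\phi(y))+\tau(\sqb{x,y})$, and likewise for $\undertilde\omega$; recognizing the term $\tau(\sqb{x,y})$ (resp.\ $\tau(\cb{x,y})$) as a $2$-coboundary $\delta\tau$ via Proposition~\ref{cocycleids}, this reads precisely as $f\omega=\theta\phi+\delta\tau$, i.e.\ $[\theta\phi]=[f\omega]$ in $H^2(\g,V)$.

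The main obstacle I expect is the bookkeeping in the forward direction: one must justify carefully that $\phi$ is genuinely a compatible Lie algebra automorphism of $\g$ (not merely a linear map), which requires reducing the $\g$-components of the equations $\Psi[(x,0),(y,0)]_\omega=[\Psi(x,0),\Psi(y,0)]_\theta$ modulo $V$, and that the off-diagonal map $\tau$ really assembles into a single linear map whose coboundary absorbs the discrepancy simultaneously for both brackets. The diagonal nature of the $\GL(V)$-action is what makes the two bracket computations decouple cleanly and lets a single $f$ work for both components; the compatibility (mixed Jacobi) condition never needs to be re-examined, since it is already encoded in $\omega,\theta$ being genuine $2$-cocycles.
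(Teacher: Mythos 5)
Your proof is correct, and in substance the forward direction follows the paper's argument exactly: like the authors, you restrict the isomorphism to $V$ to obtain $f$, project to $\g$ to obtain $\phi$ (your induced map on quotients $\g_\omega/V\to\g_\theta/V$ is precisely the paper's $p_\g\circ\Phi\circ i$ under the identification of $\g_\theta/V$ with $\g$), and package the off-diagonal linear map $\tau$ as the coboundary that absorbs the discrepancy for both brackets simultaneously. The only blemish there is a harmless sign slip: equating $V$-components actually gives $\underline\theta(\phi(x),\phi(y))=\tau(\sqb{x,y})+f(\underline\omega(x,y))$, i.e.\ $f\omega=\theta\phi-\delta\tau$ rather than $f\omega=\theta\phi+\delta\tau$; since $-\tau$ is again a linear map, the conclusion $\sqb{f\omega}=\sqb{\theta\phi}$ is unaffected. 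Where you genuinely depart from the paper is in the converse: the authors define the map $\Phi((x,u))=\nb{\phi(x),\varphi(x)+f(u)}$ in one stroke and check it is an isomorphism by reversing their computation, whereas you factor the isomorphism as $\g_\omega\cong\g_{f\omega}\cong\g_{\theta\phi}\cong\g_\theta$, verifying the outer two maps directly (via $\id_\g\oplus f^{-1}$ and $\phi\oplus\id_V$, both of which check out for both products) and delegating the middle one to Lemma~\ref{cohomiso}. This modular route is conceptually cleaner, as it isolates the $\GL(V)$-action, the $\Aut(\g)$-action, and the cohomological equivalence as three separate, reusable isomorphisms; its cost is that it leans on Lemma~\ref{cohomiso}, which in the paper rests on the external classification of equivalent extensions by $H^2$, while the paper's construction at this point is self-contained. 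Unwinding your composite (the equivalence underlying Lemma~\ref{cohomiso} being $(x,u)\mapsto(x,u+\varphi(x))$) recovers exactly the paper's map, so the two proofs produce the same isomorphism.
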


\begin{proof}
	We start by proving the direct implication. 
	
	Suppose then that $\Phi : \g_\omega \rightarrow \g_\theta$ is an isomorphism. Since $Z(\g_\omega)=Z(\g_\theta)=V$, we can consider the restriction $f:=\Phi_{\mid V}\in \GL(V)$. We  also consider the maps $\phi:\g \rightarrow \g$ and $\varphi:\g \rightarrow V$ defined as follows.
	
	Let $i:\g \rightarrow \g_\omega$ be the canonical injection and let $p_\g : \g_\theta \rightarrow \g$ and $p_V : \g_\theta \rightarrow V$ be the canonical projections. We set $\phi := p_\g \circ \Phi \circ i$ and $\varphi := p_V \circ \Phi \circ i$.
	In other words, we have just defined $f$, $\phi$ and $\varphi$ so that, given $(x,u)\in \g_\omega$ with $x \in \g$ and $u \in V$, \[\Phi((x,u))=\big(\phi(x),\varphi(x)+f(u)\big).\]
	For $x,y \in \g$, we have
	\begin{align*}
		\Phi\nb{\sqb{x,y}_\omega} &= \Phi\nb{\nb{\sqb{x,y},\underline\omega(x,y)}} \\
		&= \nb{\phi\nb{\sqb{x,y}}, \varphi\nb{\sqb{x,y}}+f\nb{\underline\omega(x,y)}}, 		\text{ and } \\
		\\
		\sqb{\Phi(x),\Phi(y)}_\theta &= \sqb{(\phi(x),\varphi(x)),(\phi(y),\varphi(y))}_\theta \\ 
		&= \nb{\sqb{\phi(x),\phi(y)},\underline\theta(\phi(x),\phi(y))}.
	\end{align*}
	Since $\Phi$ is an isomorphism, we have that \[\underline\theta(\phi(x),\phi(y))=\varphi\nb{\sqb{x,y}}+f\nb{\underline\omega(x,y)}, \text{ and } \phi(\sqb{x,y})=\sqb{\phi(x),\phi(y)}, \] and in the same way, it is shown that
	\[\undertilde\theta(\phi(x),\phi(y))=\varphi\nb{\cb{x,y}}+f\nb{\undertilde\omega(x,y)}, \text{ and } \phi(\cb{x,y})=\cb{\phi(x),\phi(y)}.\]
	Thus, $\phi \in \Aut(\g)$ and, according to all the above definitions, we have $\theta\phi=\delta\varphi+f\omega $, which is the same as $ \sqb{\theta\phi} =\sqb{f\omega} $, as we wanted to show.

	Conversely, assume that there exist $\phi \in \Aut(\g)$ and $f \in \GL(V)$ such that $ \sqb{\theta\phi} =\sqb{f\omega} $, so write $\theta\phi=\delta\varphi+f\omega $. We construct a map
	$\Phi:\g_\omega \rightarrow \g_\theta$ by setting $\Phi((x,u)):=\nb{\phi(x),\varphi(x)+f(u)}$, for $x\in\g$ and $u\in V$. Reversing the above argument we see that $\Phi$ is an isomorphism.
\end{proof}

In theory, this result already allows us to develop a classification method. As we have already discussed, for classifying the nilpotent compatible Lie algebras of dimension $n$ (assuming we have the classification up to dimension $n-1$), we would simply consider the possible dimensions of the centre $1, 2, \ldots, n-1$. For each central dimension $s$ we obtain any $n$-dimensional algebra by extending one of the $(n-s)$-dimensional algebras already on our list, and with the above criterion we could prevent duplicate classes of isomorphism.

In practice, it might still be quite complex to understand the actions of $\Aut(\g)$ and $\GL(V)$ on the cohomology group $H^2(\g,V)$. With this in mind, we restate the above criterion in a more computational form.

\begin{lemma}\label{components}
	Let $\g$ be a compatible Lie algebra and $V$ a vector space, and fix a basis $\cb{e_1,\ldots,e_s}$ of $V$. Let $\omega=(\underline\omega,\undertilde\omega)\in Z^2(\g,V)$. Then $\omega$ can be uniquely written as
	\begin{align*}
		\underline\omega(x,y)=\sum_{i=1}^s \underline\omega_i(x,y)e_i, \qquad \undertilde\omega(x,y)=\sum_{i=1}^s \undertilde\omega_i(x,y)e_i,
	\end{align*}
	where $\omega_i=(\underline\omega_i,\undertilde\omega_i)\in Z^2(\g,\KK)$ for each $i=1,\ldots,s$. Moreover, $\omega\in B^2(\g,V)$ if and only if $\omega_i\in B^2(\g,\KK)$ for each $i=1,\ldots,s$.
\end{lemma}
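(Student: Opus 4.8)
The plan is to reduce everything to the linear-algebraic observation that, since we work with trivial coefficients, the coordinate expansion with respect to the chosen basis $\{e_1,\ldots,e_s\}$ of $V$ turns the cochain complex $\Cd^\bullet(\g,V)$ into a direct sum of $s$ copies of $\Cd^\bullet(\g,\KK)$, and that both the cocycle conditions and the coboundary conditions of Proposition~\ref{cocycleids} are preserved coordinate by coordinate. I will carry out the explicit verification using those formulas, as this keeps the argument self-contained.

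First I would establish the decomposition and its uniqueness. Since $\{e_1,\ldots,e_s\}$ is a basis of $V$, every value $\underline\omega(x,y)\in V$ has a unique expansion $\sum_{i=1}^s\underline\omega_i(x,y)e_i$ with $\underline\omega_i(x,y)\in\KK$; the resulting maps $\underline\omega_i\colon\g\times\g\to\KK$ are alternating and bilinear because $\underline\omega$ is and each coordinate projection $V\to\KK$ is linear. The same applies to $\undertilde\omega$, yielding $\omega_i=(\underline\omega_i,\undertilde\omega_i)$, and uniqueness is immediate from the linear independence of the $e_i$.

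Next I would verify that each $\omega_i$ lies in $Z^2(\g,\KK)$. Here I substitute the expansions into the three cocycle identities of Proposition~\ref{cocycleids}. Each identity is a sum of values lying in $V$; expanding in the basis and collecting the coefficient of each $e_i$, linear independence forces every such coefficient to vanish separately, and the coefficient of $e_i$ in the first (resp.\ second, resp.\ third) identity is exactly the corresponding cocycle identity for $(\underline\omega_i,\undertilde\omega_i)$. The one point requiring slight care is the mixed third identity, in which $\underline\omega$ and $\undertilde\omega$ occur together; but since both are expanded in the same basis, the $e_i$-coefficient still separates cleanly into the mixed identity for $\omega_i$.

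Finally I would treat the coboundary statement using the explicit description of $B^2$ in Proposition~\ref{cocycleids}. If $\omega=\delta\varphi$ for a linear map $\varphi\colon\g\to V$, I write $\varphi=\sum_{i=1}^s\varphi_i e_i$ with $\varphi_i\colon\g\to\KK$ linear; then $\varphi(\sqb{x,y})=\sum_i\varphi_i(\sqb{x,y})e_i$ and $\varphi(\cb{x,y})=\sum_i\varphi_i(\cb{x,y})e_i$, so comparing the coefficients of $e_i$ gives $\omega_i=\delta\varphi_i\in B^2(\g,\KK)$. Conversely, given $\varphi_i\colon\g\to\KK$ with $\omega_i=\delta\varphi_i$, the map $\varphi=\sum_i\varphi_i e_i$ recovers $\omega=\delta\varphi$. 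I expect no genuine obstacle: the whole argument is the statement that coordinatization commutes with the (coefficient-linear, trivial-action) differential, and the only thing to watch is the bookkeeping of the two products $\sqb{-,-}$ and $\cb{-,-}$ as they pass through the decomposition of $\varphi$.
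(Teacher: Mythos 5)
Your proposal is correct and follows essentially the same route as the paper's own proof: expand in the basis $\{e_1,\ldots,e_s\}$, use linear independence of the $e_i$ to split each cocycle identity of Proposition~\ref{cocycleids} into its coordinate identities, and decompose $\varphi$ into component functions $\varphi_i$ to get the coboundary equivalence in both directions. The only cosmetic difference is that you verify all three cocycle identities (including the mixed one) and both directions of the coboundary claim explicitly, whereas the paper illustrates one identity and one direction and leaves the rest as analogous.
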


From now on, $\cb{e_1,\ldots,e_s}$ is a fixed basis of $V$. 

\begin{proof}
	The fact we can write $\underline\omega(x,y)$ and $\undertilde\omega(x,y)$ uniquely as stated is immediate, so we need only prove that the component functions are in fact cocycles with values in $\KK$.
	We illustrate this with one of the identities:
	\begin{align*}
		&\underline{\omega}(\sqb{x,y},z)+ \underline{\omega}(\sqb{z,x},y) + \underline{\omega}(\sqb{y,z},x) \\
		= & \sum_{i=1}^s \underline\omega_i(\sqb{x,y},z)e_i + \sum_{i=1}^s \underline\omega_i(\sqb{z,x},y)e_i + \sum_{i=1}^s \underline\omega_i(\sqb{y,z},x)e_i \\
		=& \sum_{i=1}^s \bigg(\underline\omega_i(\sqb{x,y},z)+\underline\omega_i(\sqb{z,x},y)+ \underline\omega_i(\sqb{y,z},x)\bigg)  e_i;
	\end{align*}
	
	Thus we have $\underline{\omega}(\sqb{x,y},z)+ \underline{\omega}(\sqb{z,x},y) + \underline{\omega}(\sqb{y,z},x)=0$ if and only if \[\underline\omega_i(\sqb{x,y},z)+\underline\omega_i(\sqb{z,x},y)+ \underline\omega_i(\sqb{y,z},x)=0 \text{ for all } i.\]
	
	The second part follows from a similar argument. If $\varphi$ is a linear map from $\g$ to $V$, we can write
	\begin{align*}
		\underline{\delta\varphi}\nb{x,y} &= \varphi(\sqb{x,y})
		=\sum_{i=1}^s \varphi_i(\sqb{x,y}) e_i  
		= \sum_{i=1}^s \underline{\delta\varphi_i}\nb{x,y} e_i,
	\end{align*}
	where the $\varphi_i$ are the component functions. This works the same for $\undertilde{\delta\varphi}$ and these expressions show that a cocycle is of the form $\delta\varphi$ if and only if each component is of the form $\delta\varphi_i$, thus proving the second assertion.
\end{proof}

To make use of this lemma, we will need to impose some conditions on the algebras we are working with. This will lead to the following notion of algebras with central components, and the final result will
make it so that we will skip said algebras when we use the construction/classification method. This will not be an issue, as we will see that the algebras we skip are easy to construct and will not pose problems when it comes to identifying isomorphisms.

\begin{definition}\label{centralcomponent}
	Let $\g$ be a compatible Lie algebra. A central component of $\g$ is a one-dimensional ideal $\KK x$ where $x\in Z(\g)$ such that there is a subalgebra $\mathfrak{i}$ satisfying $\g = \mathfrak{i}\oplus \KK x$. %and $\dbl{\g,\g}\subseteq \mathfrak{i}$.
\end{definition}

%\begin{remark}
%The subalgebra $\mathfrak{i}$ in the above definition satisfies $\dbl{\g,\g}\subseteq \mathfrak{i}$, thus it is an ideal.
%\end{remark}

\begin{remark}\!
	\begin{enumerate}[(i)]
		\item 	The subalgebra $\mathfrak{i}$ in the above definition satisfies $\dbl{\g,\g}\subseteq \mathfrak{i}$, thus it is an ideal.
		\item We might consider an alternative notion of a central component by defining it as a nonzero central ideal $\mathfrak{z}$ such that $\g = \mathfrak{i}\oplus\mathfrak{z}$, for some subalgebra $\mathfrak{i}$ of $\g$. This is equivalent to the previous definition.%, in the sense that an ``$n$-dimensional central component'' is a direct sum of  $n$ central components as in Definition~\ref{centralcomponent}.
		\item The definition implies that, in the context of our construction method, the $n$-dimensional nilpotent algebras with central components are constructed by taking the direct sum of each $(n-1)$-dimensional nilpotent algebra with a one-dimensional central ideal. This justifies our previous claim that these algebras may be skipped without posing problems, as they are not pairwise isomorphic if we start with a list of $(n-1)$-dimensional nilpotent algebras having no isomorphic pairs.
	\end{enumerate}
	
\end{remark}

%{\color{gray}
	%Alternative?
	%
	%\begin{definition}
	%	Let $\g$ be a compatible Lie algebra. A central component of $\g$ is an abelian ideal $\mathfrak{i}$ such that $\mathfrak{i} \cap \dbl{\g,\g} = \cb{0}$.
	%\end{definition}
	%}	
%	\begin{remark}
	%		The definition implies that, in the context of our construction method, the $n$-dimensional nilpotent algebras with central components are constructed by taking the direct sum of each $(n-1)$-dimensional nilpotent algebra with a one-dimensional central ideal. This justifies our previous claim that these algebras may be skipped without posing problems, as they are not pairwise isomorphic if we start with a list of $(n-1)$-dimensional nilpotent algebras having no isomorphic pairs.
	%	\end{remark}
%	
%	
%{\color{gray}
	%Too many remarks in a row? Should I make just one remark environment and use an enumerate/itemize environment inside that?
	%}	

\begin{lemma}\label{lindep}
	Let $\g$ be a compatible Lie algebra and $V$ be a vector space. Let $\omega \in Z^2(\g,V)$ such that the corresponding central extension is full. %$\ann(\omega)\cap Z(\g)=0$. 
	Then, $\g_\omega$ has a central component if and only if the cohomology classes of the components $\omega_i$ are linearly dependent in $H^2(\g,\KK)$, or in other words, the rank of $\spann\cb{\sqb{\omega_1}, \sqb{\omega_2}, \ldots, \sqb{\omega_s}}\subset H^2(\g,\KK)$ is less than $s$. 
\end{lemma}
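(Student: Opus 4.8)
\emph{Strategy and dictionary.} The two sides of the equivalence are both linear--algebraic, and the plan is to convert ``$\g_\omega$ has a central component'' into ``$\g_\omega$ has a codimension-one subalgebra containing a chosen central line'', and then to read a relation among the classes $\sqb{\omega_i}$ directly off the equations defining such a subalgebra. The dictionary I will use is this: by fullness $Z(\g_\omega)=V$, and a general linear functional on $\g_\omega=\g\oplus V$ has the form $\Lambda(x,v)=\alpha(x)+\beta(v)$ with $\alpha\in\g^\ast$ and $\beta\in V^\ast$; writing $\beta=\sum_{i=1}^s b_i e_i^\ast$ in the dual basis, one has $\beta(\underline\omega(x,y))=\sum_i b_i\,\underline\omega_i(x,y)$ and $\beta(\undertilde\omega(x,y))=\sum_i b_i\,\undertilde\omega_i(x,y)$, so that $(\beta\circ\underline\omega,\beta\circ\undertilde\omega)$ is exactly the scalar cocycle $\sum_i b_i\omega_i$ of Lemma~\ref{components}. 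Combined with the coboundary formula $\delta\varphi=(\varphi\circ\sqb{-,-},\varphi\circ\cb{-,-})$ from Proposition~\ref{cocycleids}, this links functionals on $\g_\omega$ to relations in $H^2(\g,\KK)$.

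\emph{Forward direction.} Assume $\g_\omega=\mathfrak{i}\oplus\KK z$ is a central component. By fullness $z\in Z(\g_\omega)=V$, and I write $\mathfrak{i}=\ker\Lambda$ for a functional $\Lambda=\alpha+\beta$ normalised so that $\Lambda(z)\neq 0$. Since the remark following Definition~\ref{centralcomponent} gives $\dbl{\g_\omega,\g_\omega}\subseteq\mathfrak{i}=\ker\Lambda$, the functional $\Lambda$ annihilates every product; evaluating on $\sqb{(x,0),(y,0)}_\omega=(\sqb{x,y},\underline\omega(x,y))$ and on $\cb{(x,0),(y,0)}_\omega=(\cb{x,y},\undertilde\omega(x,y))$ yields $\alpha(\sqb{x,y})+\beta(\underline\omega(x,y))=0$ and $\alpha(\cb{x,y})+\beta(\undertilde\omega(x,y))=0$ for all $x,y\in\g$. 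Via the dictionary above these two equations say precisely that $\sum_i b_i\omega_i=-\delta\alpha\in B^2(\g,\KK)$, i.e.\ $\sum_i b_i\sqb{\omega_i}=0$ in $H^2(\g,\KK)$. This relation is nontrivial: if $\beta=0$ then $\Lambda$ would vanish on all of $V$, hence on $z\in V$, contradicting $\Lambda(z)\neq 0$; so $(b_1,\dots,b_s)\neq 0$ and the classes $\sqb{\omega_i}$ are linearly dependent.

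\emph{Reverse direction.} Conversely, a nontrivial dependence provides scalars $(c_1,\dots,c_s)\neq 0$ and $\psi\in\g^\ast$ with $\sum_i c_i\omega_i=\delta\psi$, that is $\sum_i c_i\underline\omega_i(x,y)=\psi(\sqb{x,y})$ and $\sum_i c_i\undertilde\omega_i(x,y)=\psi(\cb{x,y})$. Setting $\pi:=\sum_i c_i e_i^\ast\in V^\ast$, which is nonzero, these become $\pi(\underline\omega(x,y))=\psi(\sqb{x,y})$ and $\pi(\undertilde\omega(x,y))=\psi(\cb{x,y})$. I then claim $\mathfrak{i}:=\cb{(x,v)\in\g_\omega\mid \pi(v)=\psi(x)}$ is a subalgebra: for $(x,u),(y,v)\in\mathfrak{i}$ one has $\sqb{(x,u),(y,v)}_\omega=(\sqb{x,y},\underline\omega(x,y))\in\mathfrak{i}$ because $\pi(\underline\omega(x,y))=\psi(\sqb{x,y})$, and identically for $\cb{-,-}_\omega$. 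As $\pi\neq 0$, the subspace $\mathfrak{i}$ is a hyperplane; picking any $z\in V$ with $\pi(z)\neq 0$ gives $z\in Z(\g_\omega)$ with $z\notin\mathfrak{i}$, so $\g_\omega=\mathfrak{i}\oplus\KK z$ and $\KK z$ is a central component.

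\emph{Main obstacle.} The computations are mechanical once the dictionary between $\omega$ and its components $\omega_i$ is fixed; the one genuinely delicate point is guaranteeing nontriviality of the relation in the forward direction, and this is exactly where fullness is essential: it forces $z\in V$, so $\Lambda(z)\neq 0$ forces $\beta\neq 0$. Without fullness $z$ could acquire a $\g$-component and this step would break. The remaining care is purely bookkeeping---recognising $\sum_i b_i\omega_i$ as $-\delta\alpha$ and $\sum_i c_i\omega_i$ as $\delta\psi$ through Proposition~\ref{cocycleids}.
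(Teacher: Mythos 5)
Your proof is correct, and it takes a genuinely different route from the paper's: where the paper works with bases, you work with the dual picture of linear functionals. The paper's forward direction extends a spanning vector $v_1$ of the central component (which lies in $V$ by fullness) to a basis of $V$ and asserts that the $v_1$-component of $\omega$ then vanishes, deducing a relation $\sum_i a_{i1}\omega_i=0$ among the cocycles themselves; its reverse direction replaces $\omega_s$ by the cohomologous combination $\sum_{i<s}\alpha_i\omega_i$, invokes Lemma~\ref{cohomiso} to get $\g_\omega\cong\g_{\omega'}$, and exhibits an explicit central component of $\g_{\omega'}$ after a change of basis of $V$. You instead encode the complement as $\ker\Lambda$ with $\Lambda=\alpha+\beta$, so that annihilation of $\dbl{\g_\omega,\g_\omega}$ produces the relation $\sum_i b_i\omega_i=-\delta\alpha$ modulo coboundaries (with $\beta\neq 0$ forced by fullness), and conversely you build the hyperplane subalgebra $\{(x,v)\mid \pi(v)=\psi(x)\}$ directly inside $\g_\omega$, bypassing Lemma~\ref{cohomiso} altogether. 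Your version buys more than self-containment: the coboundary term $-\delta\alpha$ is essential, because the complementary subalgebra need not be adapted to the splitting $\g\oplus V$, in which case the dependence exists only among the classes $\sqb{\omega_i}$ and not among the cocycles $\omega_i$, and the paper's intermediate assertion fails as stated. A concrete instance: let $\g=\NCL_{3,2}$ with basis $\{x_1,x_2,x_3\}$ and $\sqb{x_1,x_2}=x_3$, let $V=\spann\{e_1,e_2\}$, and take $\omega_1=(\Delta_{23},0)$, $\omega_2=(\Delta_{23}+\Delta_{12},0)$, the maps $\Delta_{ij}$ being taken with respect to the $x$-basis. This extension is full, and $\KK e_1$ is a central component of $\g_\omega$ with complementary subalgebra $\spann\{x_1,x_2,x_3+e_2,e_1+e_2\}$; yet $\omega_1,\omega_2$ are linearly independent as cocycles (only their classes coincide, their difference being $\delta\varphi$ for $\varphi$ dual to $x_3$), and since the image of $\underline\omega$ is all of $V$, no extension of $e_1$ to a basis of $V$ makes the $e_1$-component of $\omega$ vanish. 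So the forward direction genuinely needs the correction your $\delta\alpha$ provides. What the paper's route buys in return is explicitness: its reverse direction produces an isomorphic extension $\g_{\omega'}$ written in an adapted basis, which is the form actually used in the computations of Section~\ref{Classification}.
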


\begin{proof}
	Recall that $\ann(\omega)\cap Z(\g)=0$ is equivalent to $Z(\g_\omega)=V$. For the direct implication, assume $\g_\omega$ has a central component $\KK v_1 \subseteq V$, and extend the element $v_1$ to a basis $\cb{v_1,v_2,\ldots,v_s}$ of $V$. There exists a change of basis matrix $A=(a_{ij})$ from the basis $\cb{e_1,e_2,\ldots,e_s}$ to the basis $\cb{v_1,v_2,\ldots,v_s}$, that is,
	\[
	e_i = \sum_{j=1}^s a_{ij}v_j.
	\]
	We can rewrite the expressions in Lemma~\ref{components} as 
	\begin{align*}
		\underline\omega(x,y)=\sum_{i,j=1}^s a_{ij}\underline\omega_i(x,y)v_j, \qquad \undertilde\omega(x,y)=\sum_{i,j=1}^s a_{ij} \undertilde\omega_i(x,y)v_j.
	\end{align*}
	We note that, since $v_1 \not\in \dbl{\g_\omega,\g_\omega}$ and because $\omega(x,y)$ expresses the $V$ component of the products of $x$ and $y$, the $v_1$ component of $\omega(x,y)$ is equal to $(0,0)$. We can write that precisely as
	\begin{align*}
		\sum_{i=1}^s a_{i1}\underline\omega_i(x,y)=0, \qquad \sum_{i=1}^s a_{i1} \undertilde\omega_i(x,y) = 0,
	\end{align*}
	for all $x,y \in \g$, and this in turn can be rewritten as $\sum_{i=1}^s a_{i1}\omega_i = 0$. This of course implies that  $\sum_{i=1}^s a_{i1}\sqb{\omega_i} = 0$ and so the cohomology classes are linearly dependent, as the first column of $A$ is nonzero, 
	
	Conversely, assume that the set $\cb{\sqb{\omega_1}, \sqb{\omega_2}, \ldots, \sqb{\omega_s}}$ is linearly dependent. Reordering the cocycles, we can assure that
	\[
	\sqb{\omega_s} = \sum_{i=1}^{s-1} \alpha_i \sqb{\omega_i},
	\]
	For scalars $\alpha_i\in \KK$. Define a new cocycle $\omega'$ such that
	\begin{align*}
		\underline\omega'(x,y)=\sum_{i=1}^s \underline\omega'_i(x,y)e_i, \qquad \undertilde\omega'(x,y)=\sum_{i=1}^s \undertilde\omega'_i(x,y)e_i,
	\end{align*}
	where $\underline\omega'_i = \underline\omega_i$ and $\undertilde\omega'_i=\undertilde\omega_i$ for $i=1, \ldots, s-1$, and $\underline\omega'_s = \sum_{i=1}^{s-1} \alpha_i\underline\omega_i$ and $\undertilde\omega'_s= \sum_{i=1}^{s-1} \alpha_i\undertilde\omega_i$. It is clear that $\sqb{\omega'}=\sqb{\omega}$ and by Lemma~\ref{cohomiso}, $\g_\omega \cong \g_{\omega'}$. We can write 
	\begin{align*}
		\underline\omega'(x,y)=\sum_{i=1}^{s-1} \underline\omega_i(x,y)w_i, \qquad \undertilde\omega(x,y)=\sum_{i=1}^{s-1} \undertilde\omega_i(x,y)w_i,
	\end{align*}
	where, for $i=1, \ldots, s-1$, $w_i:=e_i+\alpha_ie_s$. 
	
	%		\textit{ \color{gray}
		%			Just to justify it, I am computing here the first expression but I suppose it's not needed in an actual paper manuscript or something since it's very direct.
		%			\begin{align*}
			%				\sum_{i=1}^{s-1} \underline\omega_i(x,y)w_i &= \sum_{i=1}^{s-1} \underline\omega_i(x,y)(e_i+\alpha_ie_s) \\
			%				&= \sum_{i=1}^{s-1} \underline\omega_i(x,y)e_i + \sum_{i=1}^{s-1} \alpha_i\underline\omega_i(x,y)e_s \\
			%				&= \sum_{i=1}^{s-1} \underline\omega'_i(x,y)e_i + \underline\omega'_s(x,y)e_s
			%			\end{align*}	
		%		}
	%		
	We can complete the set $\cb{w_1, \ldots, w_{s-1}}$ to a basis  $\cb{w_1, \ldots, w_{s-1}, w_s}$ and  $\KK w_s$ is a central component of $\g_{\omega'}$, therefore $\g_\omega$ also has a central component.
\end{proof}

We can rephrase Theorem~\ref{maintheorem1} in terms of the coordinate cocycles (with the caveat that we have to exclude the algebras with central components).

\begin{theorem}\label{maintheorem2}
	Let $\g$ be a compatible Lie algebra and $V$ be a vector space. Let $\omega,\theta \in Z^2(\g,V)$. Suppose that $\g_\omega$ has no central components and $\ann(\omega)\cap Z(\g)=\ann(\theta)\cap Z(\g)=0$. Under these conditions, $\g_\omega \cong \g_\theta$ if and only if there exists $\phi \in \Aut(\g)$ such that 
	\[\spann\cb{{\sqb{\omega_i}}\mid 1 \leq s \leq n}=\spann\cb{\sqb{\theta_i\phi}\mid 1 \leq s \leq n}\subseteq H^2(\g,\KK),\]
	where the components $\omega_i$ and $\theta_i$ are the ones introduced in Lemma~\ref{components}.	
\end{theorem}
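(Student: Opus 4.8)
The plan is to deduce this from Theorem~\ref{maintheorem1} by translating the abstract condition ``$\sqb{\theta\phi}=\sqb{f\omega}$ for some $\phi\in\Aut(\g)$ and $f\in\GL(V)$'' into a statement about the spans of the coordinate cohomology classes. The bridge between the two formulations is an explicit description of how the two group actions act on the components introduced in Lemma~\ref{components}, together with the coboundary half of that lemma.

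First I would record two component identities. For $\phi\in\Aut(\g)$, expanding $\omega(\phi(x),\phi(y))$ against the fixed basis of $V$ using \eqref{actaut} shows that $(\theta\phi)_i=\theta_i\phi$, so the action of $\Aut(\g)$ is ``diagonal'' on components and $\sqb{(\theta\phi)_i}=\sqb{\theta_i\phi}$. Second, if $f\in\GL(V)$ has matrix $(f_{ij})$, that is $f(e_j)=\sum_i f_{ij}e_i$, then \eqref{actgl} gives $(f\omega)_i=\sum_j f_{ij}\omega_j$, and hence $\sqb{(f\omega)_i}=\sum_j f_{ij}\sqb{\omega_j}$ in $H^2(\g,\KK)$. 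Since $(f_{ij})$ is invertible, this yields the crucial fact that applying any $f\in\GL(V)$ leaves the span $\spann\cb{\sqb{\omega_i}\mid 1\le i\le s}$ unchanged. Finally, by the coboundary part of Lemma~\ref{components}, the equality $\sqb{\theta\phi}=\sqb{f\omega}$ in $H^2(\g,V)$ is equivalent to the componentwise equalities $\sqb{(\theta\phi)_i}=\sqb{(f\omega)_i}$ for all $i$.

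For the forward implication I would assume $\g_\omega\cong\g_\theta$ and invoke Theorem~\ref{maintheorem1} (both extensions being full by hypothesis) to obtain $\phi$ and $f$ with $\sqb{\theta\phi}=\sqb{f\omega}$. Combining the observations above gives $\sqb{\theta_i\phi}=\sum_j f_{ij}\sqb{\omega_j}$ for every $i$; since $(f_{ij})$ is invertible this relation can be inverted, so the families $\cb{\sqb{\theta_i\phi}}$ and $\cb{\sqb{\omega_j}}$ have the same span, as required.

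The converse is where the no-central-component hypothesis enters, and this is the step I expect to be the main (if modest) obstacle. Assuming only that the two spans agree for some $\phi$, I must manufacture an $f\in\GL(V)$, i.e.\ an invertible change-of-basis matrix relating the two families, and a priori the vectors involved could be linearly dependent. Here Lemma~\ref{lindep} is essential: because $\g_\omega$ has no central component, the classes $\sqb{\omega_1},\ldots,\sqb{\omega_s}$ are linearly independent and thus form a basis of the common span $W$, which is therefore $s$-dimensional. Then the $s$ vectors $\sqb{\theta_i\phi}$ spanning $W$ are forced to be a basis as well, so there is an invertible matrix $(f_{ij})$ with $\sqb{\theta_i\phi}=\sum_j f_{ij}\sqb{\omega_j}$. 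Defining $f\in\GL(V)$ by $f(e_j)=\sum_i f_{ij}e_i$ makes $\sqb{(f\omega)_i}=\sqb{(\theta\phi)_i}$ hold for every $i$, whence $\sqb{f\omega}=\sqb{\theta\phi}$ by Lemma~\ref{components}, and Theorem~\ref{maintheorem1} delivers $\g_\omega\cong\g_\theta$. The only real care needed throughout is to keep the matrix conventions for the $\GL(V)$-action consistent, so that invertibility transfers correctly between the matrix $(f_{ij})$ and the linear map $f$.
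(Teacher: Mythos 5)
Your proof is correct and follows essentially the same route as the paper's: both directions reduce to Theorem~\ref{maintheorem1}, using the componentwise coboundary criterion of Lemma~\ref{components} to translate $\sqb{\theta\phi}=\sqb{f\omega}$ into the matrix relation $\sqb{\theta_i\phi}=\sum_j f_{ij}\sqb{\omega_j}$, and invoking Lemma~\ref{lindep} in the converse to get linear independence of the $\sqb{\omega_i}$ and hence invertibility of the matrix defining $f$. Your explicit remark that the $s$ classes $\sqb{\theta_i\phi}$ spanning the $s$-dimensional common span must themselves form a basis is a small detail the paper leaves implicit, but the argument is the same.
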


\begin{proof}
	Suppose first that $\g_\omega \cong \g_\theta$. By Theorem~\ref{maintheorem1}, there exist $\phi \in \Aut(\g)$ and $f \in \GL(V)$ such that $ \sqb{\theta\phi} =\sqb{f\omega} $. Write $f(e_i)=\sum_{j=1}^s a_{ij}e_j$. We compute the following
	\begin{align}
		(\underline\theta\phi-f\underline\omega)(x,y) &= \sum_{j=1}^s (\underline\theta_j\phi)(x,y)e_j-f\nb{\sum_{i=1}^s \underline\omega_i(x,y)e_i} \label{psiphiexpression}	 \\ \nonumber
		&= \sum_{j=1}^s (\underline\theta_j\phi)(x,y)e_j-\sum_{i=1}^s \sum_{j=1}^s a_{ij}\underline\omega_i(x,y)e_j	 \\ \nonumber
		&= \sum_{j=1}^s \nb{\underline\theta_j\phi-\sum_{i=1}^s a_{ij}\underline\omega_i}(x,y)e_j,
	\end{align}
	and the computation for the pair $\undertilde\omega$ and $\undertilde\theta$ is analogous. So, in the end, we can write
	\begin{equation}\label{psiphiexpression2}
		(\theta\phi-f\omega)(x,y)=\sum_{j=1}^s \nb{\theta_j\phi-\sum_{i=1}^s a_{ij}\omega_i}(x,y)e_j.
	\end{equation}
	By Lemma~\ref{components}, since $\theta\phi-f\omega \in B^2(\g,V)$, $\theta_j\phi-\sum_{i=1}^s a_{ij}\omega_i \in B^2(\g,\KK)$ for each $j$. This can be rewritten as
	\[
	\sqb{\theta_j\phi} = \sum_{i=1}^s a_{ij}\sqb{\omega_i} \in H^2(\g,\KK) \text{, for each } j = 1,\ldots, s,
	\]
	and thus $\spann\cb{{\sqb{\omega_i}}\mid 1 \leq s \leq n}=\spann\cb{\sqb{\theta_i\phi}\mid 1 \leq s \leq n}$, as we wanted to show.
	
	Conversely, now suppose that 
	\[\spann\cb{{\sqb{\omega_i}}\mid 1 \leq s \leq n}=\spann\cb{\sqb{\theta_i\phi}\mid 1 \leq s \leq n}\subseteq H^2(\g,\KK).\]
	Since $\g_\omega$ has no central components, the $\sqb{\omega_i}$ are linearly independent by Lemma~\ref{lindep}, and there is an invertible matrix $(a_{ij})$ such that
	\[
	\sqb{\theta_i\phi} = \sum_{j=1}^s a_{ij}\sqb{\omega_j} \text{, for } i = 1, \ldots, s
	\]
	or equivalently,
	\[
	\theta_i\phi - \sum_{j=1}^s a_{ij} \omega_j \in B^2(\g,\KK) \text{, for } i = 1, \ldots, s.
	\]
	We now define a map $f\in \GL(V)$ by setting $f(e_i):=\sum_{j=1}^s a_{ij}e_j$, and just as in~\eqref{psiphiexpression} and in~\eqref{psiphiexpression2} we compute 
	\begin{align*}
		(\theta\phi-f\omega)(x,y)=\sum_{j=1}^s \nb{(\theta_i\phi)e_j-\sum_{i=1}^s a_{ij}\omega_i}(x,y)e_j.
	\end{align*}
	Again by Lemma~\ref{components} but arguing in the opposite direction, we conclude that $ \sqb{\theta\phi} =\sqb{f\omega} $. By Theorem~\ref{maintheorem1}, we conclude that  $\g_\omega \cong \g_\theta$.
\end{proof}	
Now we are finally ready to introduce the remaining concepts and results in order to get a characterisation of the isomorphism classes of extensions in terms of orbits of the $\Aut(\g)$ action.

Denote by $G_s(H^2(\g,\KK))$ the Grassmannian consisting of the subspaces of $H^2(\g,\KK)$ of dimension $s$, and introduce an $\Aut(\g)$ action in this space in the obvious way: given $\phi\in\Aut(\g)$ and
\vspace{-0.5cm}
\[
W=\spann\nb{\sqb{\omega_1},\sqb{\omega_2},\ldots, \sqb{\omega_s}} \in G_s(H^2(\g,\KK)),
\]
we define
\[
W\phi=\spann\nb{\sqb{\omega_1\phi},\sqb{\omega_2\phi},\ldots, \sqb{\omega_s\phi}} \in G_s(H^2(\g,\KK)).
\]

\begin{lemma}
	Let $\g$ be a compatible Lie algebra and let
	\begin{align*}
		W_1&=\spann\nb{\sqb{\omega_1},\sqb{\omega_2},\ldots, \sqb{\omega_s}}\in G_s(H^2(\g,\KK)), \\ W_2&=\spann\nb{\sqb{\theta_1},\sqb{\theta_2},\ldots, \sqb{\theta_s}} \in G_s(H^2(\g,\KK)).
	\end{align*}
	If $W_1=W_2$, then 
	\[
	\bigcap_{i=1}^s \ann(\omega_i) \cap Z(\g) = \bigcap_{i=1}^s \ann(\theta_i) \cap Z(\g).
	\]
	
\end{lemma}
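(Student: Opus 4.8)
The plan is to reduce everything to a single linearity observation, anchored on the fact that $2$-coboundaries annihilate central elements on the left. First I would record the following key fact: if $x\in Z(\g)$ and $\delta\varphi=(\underline{\delta\varphi},\undertilde{\delta\varphi})$ is any $2$-coboundary with values in $\KK$, then $\delta\varphi(x,y)=(0,0)$ for every $y\in\g$. Indeed, by Proposition~\ref{cocycleids} we have $\underline{\delta\varphi}(x,y)=\varphi(\sqb{x,y})$ and $\undertilde{\delta\varphi}(x,y)=\varphi(\cb{x,y})$, and since $x$ is central both $\sqb{x,y}$ and $\cb{x,y}$ vanish. Consequently, the restriction $y\mapsto\omega(x,y)$ of a $\KK$-valued cocycle $\omega$ to a central $x$ depends only on the cohomology class $\sqb{\omega}$.

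This lets me package the annihilator condition cohomologically. For each fixed $x\in Z(\g)$ I would define a map
\[
\Psi_x\colon H^2(\g,\KK)\longrightarrow \g^\ast\oplus\g^\ast,\qquad \Psi_x(\sqb{\omega})=\big(\underline{\omega}(x,-),\,\undertilde{\omega}(x,-)\big),
\]
which is well defined by the previous paragraph and linear because the cocycle operations and evaluation at $(x,-)$ are linear. The point of this construction is the equivalence
\[
x\in\ann(\omega)\cap Z(\g)\iff \Psi_x(\sqb{\omega})=0,
\]
valid for any $\KK$-valued cocycle $\omega$, since $\ann(\omega)=\ann(\underline\omega)\cap\ann(\undertilde\omega)$.

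With this in hand the result is almost immediate. For $x\in Z(\g)$, membership of $x$ in $\bigcap_{i=1}^s\ann(\omega_i)$ is equivalent to $\Psi_x(\sqb{\omega_i})=0$ for all $i$, which by linearity of $\Psi_x$ is equivalent to $\Psi_x$ vanishing on the whole subspace $W_1=\spann\nb{\sqb{\omega_1},\ldots,\sqb{\omega_s}}$. The same argument shows that $x\in\bigcap_{i=1}^s\ann(\theta_i)\cap Z(\g)$ if and only if $\Psi_x$ vanishes on $W_2=\spann\nb{\sqb{\theta_1},\ldots,\sqb{\theta_s}}$. Since $W_1=W_2$ by hypothesis, the two vanishing conditions coincide, and therefore the two intersections with $Z(\g)$ are equal.

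I expect the only delicate point to be the well-definedness of $\Psi_x$ — that is, verifying that $\omega(x,-)$ genuinely descends to $H^2(\g,\KK)$ — which is precisely where the centrality of $x$ enters; the remaining steps are formal consequences of the linearity of $\Psi_x$ and of the decomposition $\ann(\omega)=\ann(\underline\omega)\cap\ann(\undertilde\omega)$.
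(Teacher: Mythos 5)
Your proof is correct, and it rests on the same key fact as the paper's proof --- that a coboundary vanishes whenever its first argument is central, since $\underline{\delta\varphi}(x,y)=\varphi(\sqb{x,y})$ and $\undertilde{\delta\varphi}(x,y)=\varphi(\cb{x,y})$ --- but it packages that fact differently. The paper works with representatives: since $W_1=W_2$ and both spanning sets have $s$ elements, there is an invertible matrix $(a_{ij})$ with $\sqb{\theta_i}=\sum_{j} a_{ij}\sqb{\omega_j}$, hence $\theta_i=\sum_{j} a_{ij}\omega_j+\delta\varphi_i$; evaluating at $(x,y)$ with $x\in Z(\g)$ kills the coboundary terms, and the invertibility of $(a_{ij})$ gives both directions of the equivalence between the two vanishing conditions. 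You instead promote evaluation at a central element $x$ to a well-defined linear map $\Psi_x$ on $H^2(\g,\KK)$, after which the lemma reduces to the tautology that a linear map vanishes on a spanning set if and only if it vanishes on its span. Your route buys two small things: it needs neither the invertible matrix nor even the hypothesis that the two spanning sets have the same cardinality (your argument proves the statement for arbitrary spanning sets of a common subspace), and it isolates the well-definedness of $\omega(x,-)$ on cohomology as the single place where centrality enters. The paper's route is more computational but stays in the explicit coordinate style used throughout the surrounding section, which is the style its later orbit computations rely on.
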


\begin{proof}
	If $W_1=W_2$, there is an invertible matrix $(a_{ij})$ such that
	\[
	\sqb{\theta_i} = \sum_{j=1}^s a_{ij}\sqb{\omega_j} \text{, for } i = 1, \ldots, s.
	\]
	Thus, for some maps $\varphi_i:\g \rightarrow \KK$, 
	\[
	{\theta_i} = \sum_{j=1}^s a_{ij} {\omega_j} + \delta\varphi_i \text{, for } i = 1, \ldots, s.
	\]
	Expanding, this means that
	\begin{align*}
		\underline\theta_i(x,y)=\sum_{i=1}^{s} a_{ij}\underline\omega_j(x,y)+\varphi_j({\sqb{x,y}}),
		\undertilde\theta_i(x,y)=\sum_{i=1}^{s} a_{ij}\undertilde\omega_j(x,y)+\varphi_j({\cb{x,y}}).
	\end{align*}
	From this expression it is now easy to see that $\underline\theta_i(x,y)=\undertilde\theta_i(x,y)=\sqb{x,y}=0$ for all $i = 1, \ldots, s$ if and only if $\underline\omega_i(x,y)=\undertilde\omega_i(x,y)=\sqb{x,y}=0$ for all $i = 1, \ldots, s$, and this now allows us to conclude that  indeed 
	\[
	\bigcap_{i=1}^s \ann(\omega_i) \cap Z(\g) = \bigcap_{i=1}^s \ann(\theta_i) \cap Z(\g).
	\]
\end{proof}

From this, the following set is well defined:

\begin{align*}
	\TTT_s(\g):=\Big\{W=\spann\nb{\sqb{\omega_1},\sqb{\omega_2},\ldots, \sqb{\omega_s}} &\in G_s(H^2(\g,\KK)) \Big\vert \bigcap_{i=1}^s \ann(\omega_i) \cap Z(\g)=\cb{0}\Big\}
\end{align*}

Let $\g$ be a compatible Lie algebra and $V$ a vector space of fixed dimension $s$. Define $\mathcal{E}(\g,V)$ to be the set of all full central extensions of $\g$ by an $s$-dimensional vector space $V$ which have no central components or, in more symbolic terms,
\[
\mathcal{E}(\g,V)=\cb{\g_\omega \big\vert \omega \in Z^2(\g,V) \text{ and } \spann\nb{\sqb{\omega_1}, \ldots, \sqb{\omega_s}} \in \TTT_s(\g)}.
\]

Restating Theorem~\ref{maintheorem2} in these terms, we have that $\g_\omega,\g_\theta \in \mathcal{E}(\g,V)$ are isomorphic, if and only if there exists an automorphism $\phi$ of $\g$ such that \[\spann\cb{{\sqb{\omega_i}}\mid 1 \leq s \leq n}=\spann\cb{\sqb{\theta_i\phi}\mid 1 \leq s \leq n} \in \TTT_s(\g).\]

\begin{theorem}
	There exists a one-to-one correspondence between the set of isomorphism classes of $\mathcal{E}(\g,V)$ and the set of $\Aut(\g)$-orbits of $\TTT_s(\g)$.
\end{theorem}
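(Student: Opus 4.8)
The plan is to build the correspondence from the explicit assignment that sends a central extension $\g_\omega\in\mathcal{E}(\g,V)$ to the subspace $W_\omega:=\spann\nb{\sqb{\omega_1},\ldots,\sqb{\omega_s}}\subseteq H^2(\g,\KK)$, where $\omega_1,\ldots,\omega_s$ are the coordinate cocycles of $\omega$ relative to the fixed basis $\cb{e_1,\ldots,e_s}$ of $V$ (Lemma~\ref{components}). The first thing to check is that this is well defined as a map $\mathcal{E}(\g,V)\to\TTT_s(\g)$: by the very definition of $\mathcal{E}(\g,V)$ we have $W_\omega\in\TTT_s(\g)$, and because $\g_\omega$ has no central components, Lemma~\ref{lindep} guarantees that the classes $\sqb{\omega_1},\ldots,\sqb{\omega_s}$ are linearly independent, so $W_\omega$ is genuinely an $s$-dimensional point of the Grassmannian. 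Composing with the quotient onto the set of $\Aut(\g)$-orbits yields a map $\overline\Theta\colon\mathcal{E}(\g,V)\to\TTT_s(\g)/\Aut(\g)$ sending $\g_\omega$ to the orbit of $W_\omega$; the statement then amounts to showing that $\overline\Theta$ is constant on isomorphism classes and that the induced map on isomorphism classes is a bijection.

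The heart of the argument is already contained in the restatement of Theorem~\ref{maintheorem2} recorded just above: for $\g_\omega,\g_\theta\in\mathcal{E}(\g,V)$ one has $\g_\omega\cong\g_\theta$ if and only if there is $\phi\in\Aut(\g)$ with $W_\omega=\spann\nb{\sqb{\theta_1\phi},\ldots,\sqb{\theta_s\phi}}=W_\theta\phi$, that is, if and only if $W_\omega$ and $W_\theta$ lie in the same $\Aut(\g)$-orbit. The forward implication shows that $\overline\Theta$ sends isomorphic extensions to the same orbit, so it descends to a well-defined map from isomorphism classes of $\mathcal{E}(\g,V)$ to $\TTT_s(\g)/\Aut(\g)$; the reverse implication shows that this induced map is injective. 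Thus injectivity and well-definedness require no computation beyond invoking Theorem~\ref{maintheorem2} and the definition of the $\Aut(\g)$-action on $G_s(H^2(\g,\KK))$.

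It remains to prove surjectivity, which I expect to be the only step needing a genuine construction and which is therefore the main obstacle. Given $W\in\TTT_s(\g)$, I would choose cocycles $\omega_1,\ldots,\omega_s\in Z^2(\g,\KK)$ whose classes form a basis of $W$ and assemble them into a single cocycle $\omega=(\underline\omega,\undertilde\omega)\in Z^2(\g,V)$ by setting $\underline\omega=\sum_i\underline\omega_i\,e_i$ and $\undertilde\omega=\sum_i\undertilde\omega_i\,e_i$, so that $W_\omega=W$ by construction (Lemma~\ref{components}). To conclude that $\g_\omega$ actually belongs to $\mathcal{E}(\g,V)$ I must verify two things. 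First, that the extension is full: this rests on the identity $\ann(\omega)\cap Z(\g)=\bigcap_{i=1}^s\ann(\omega_i)\cap Z(\g)$, which holds because $\underline\omega(x,y)=0=\undertilde\omega(x,y)$ precisely when every coordinate $\underline\omega_i(x,y)$ and $\undertilde\omega_i(x,y)$ vanishes, and the right-hand side equals $\cb{0}$ exactly because $W\in\TTT_s(\g)$. Second, that $\g_\omega$ has no central components: since the $\sqb{\omega_i}$ were chosen linearly independent, Lemma~\ref{lindep} gives this at once. Hence $\g_\omega\in\mathcal{E}(\g,V)$ with $\overline\Theta(\g_\omega)$ equal to the orbit of $W$, which establishes surjectivity and completes the bijection.
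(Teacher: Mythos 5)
Your proposal is correct and follows exactly the route the paper intends: the paper states this theorem without proof, as an immediate consequence of the restatement of Theorem~\ref{maintheorem2} in terms of $\mathcal{E}(\g,V)$ and $\TTT_s(\g)$, and your map $\g_\omega \mapsto \mathcal{O}(W_\omega)$ with well-definedness and injectivity from Theorem~\ref{maintheorem2} is precisely that argument. Your explicit verification of surjectivity (assembling a cocycle $\omega\in Z^2(\g,V)$ from a basis of classes of $W$, then checking fullness via $\ann(\omega)=\bigcap_i\ann(\omega_i)$ and absence of central components via Lemma~\ref{lindep}) is a detail the paper leaves implicit, and you have filled it in correctly.
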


\subsubsection*{The algorithm}

This finally allows us to present an algorithm to construct all the nilpotent compatible Lie algebras up to a target dimension $n$. It proceeds as follows, inductively on $n\geq 1$. We assume we have a list of the isomorphism classes of all nilpotent compatible Lie algebras up to dimension $n-1$.
\begin{itemize}
	\item For each dimension $s$ between $1$ and $n-1$ and for each $(n-s)$-dimensional nilpotent compatible Lie algebra $\g$ from our list, we do the following:
	\begin{itemize}
		\item compute $Z(\g)$, $H^2(\g,\KK)$, $\Aut(\g)$;
		\item compute $\TTT_s(\g)$ and its $\Aut(\g)$-orbits;
		\item construct one extension for each of the orbits obtained in the step above.
	\end{itemize}
	\item We add the $n$-dimensional nilpotent compatible Lie algebras with central components: these are the ones of the form $\g\oplus \KK x$ where $\g$ is an $(n-1)$-dimensional nilpotent compatible Lie algebra.
\end{itemize}

\section{The classification of nilpotent compatible Lie algebras up to dimension \texorpdfstring{$4$}{4}}\label{Classification}

In this section, we use what we have developed in the previous ones to give a complete classification of the nilpotent compatible Lie algebras up to dimension $4$. %From here on out, we assume the base field $\KK$ is algebraically closed and with characteristic different from $2$.

It is useful to introduce the following notation. If $\cb{e_1,\ldots,e_n}$ is a fixed basis for a compatible Lie algebra of dimension $n$, we let $\Delta_{ij}$ be the bilinear map $\g \times \g \rightarrow \KK$ defined on the basis elements by
\[
\Delta_{ij}(e_k,e_\ell)=\begin{cases}
	\:\:\;1, &\text{ if } k=i, \ell=j, \\
	-1, &\text{ if } k=j, \ell=i, \\
	\:\:\; 0, &\text{ otherwise.}
\end{cases}
\]
These maps form a basis of the space $\Hom_\KK\big(\bigwedge^2\g,\KK\big)$.  %of alternating bilinear maps $\g \times \g \rightarrow \KK$.

\subsection{Some results on automorphisms and cohomology}

We present a few results which will allow us to simplify the practical process of implementing our algorithm. We omit the proofs. % which involve only direct computations.

\begin{proposition}\label{switchaut}
	The following conditions hold.
	\begin{enumerate}[(a)]
		\item If $\g$ and $\hh$ are isomorphic compatible Lie algebras via $\varphi:\g \stackrel{\cong}{\rightarrow} \hh$, then $\Aut(\hh) \cong \Aut(\g)$ via the map $\varphi^{-1}\circ - \circ \varphi$.
		\item If $\g$ and $\hh$ are skew-isomorphic compatible Lie algebras via $\varphi:\g \stackrel{\skiso}{\rightarrow} \hh$, then $\Aut(\hh) \cong \Aut(\g)$ via the map $\varphi^{-1}\circ - \circ \varphi$. In particular, $\Aut(\g) = \Aut(\g^s)$, where $\g^s$ is a switched copy of $\g$.
	\end{enumerate}
\end{proposition}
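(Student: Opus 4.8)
The plan is to prove both statements (a) and (b) by exhibiting the explicit conjugation map $\Psi \colon \Aut(\g) \to \Aut(\hh)$, $\Psi(\alpha) = \varphi \circ \alpha \circ \varphi^{-1}$ (equivalently, its inverse sends $\beta \mapsto \varphi^{-1}\circ\beta\circ\varphi$, which is the map as written in the statement), and checking that it is a well-defined group isomorphism in each case. The core of the argument is the verification that conjugation by $\varphi$ actually lands in $\Aut(\hh)$, i.e.\ that $\varphi\circ\alpha\circ\varphi^{-1}$ is a compatible Lie algebra automorphism of $\hh$ whenever $\alpha\in\Aut(\g)$. Once well-definedness is established, the group-homomorphism property and bijectivity are formal: $\Psi(\alpha\beta)=\varphi\alpha\beta\varphi^{-1}=(\varphi\alpha\varphi^{-1})(\varphi\beta\varphi^{-1})=\Psi(\alpha)\Psi(\beta)$, and $\Psi$ has the obvious two-sided inverse $\beta\mapsto\varphi^{-1}\circ\beta\circ\varphi$.

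For part (a), since $\varphi$ is an isomorphism it respects both brackets, $\varphi(\sqb{x,y}_\g)=\sqb{\varphi(x),\varphi(y)}_\hh$ and $\varphi(\cb{x,y}_\g)=\cb{\varphi(x),\varphi(y)}_\hh$, and the same holds for $\varphi^{-1}$. For $\alpha\in\Aut(\g)$ I would compute, for $u,v\in\hh$,
\begin{align*}
(\varphi\alpha\varphi^{-1})\big(\sqb{u,v}_\hh\big) &= \varphi\,\alpha\big(\sqb{\varphi^{-1}(u),\varphi^{-1}(v)}_\g\big) = \varphi\big(\sqb{\alpha\varphi^{-1}(u),\alpha\varphi^{-1}(v)}_\g\big) \\
&= \sqb{(\varphi\alpha\varphi^{-1})(u),(\varphi\alpha\varphi^{-1})(v)}_\hh,
\end{align*}
using in turn that $\varphi^{-1}$, then $\alpha$, then $\varphi$ each preserve $\sqb{-,-}$. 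The identical computation with $\cb{-,-}$ shows the conjugate preserves the second bracket as well, so $\varphi\alpha\varphi^{-1}\in\Aut(\hh)$; invertibility is inherited since $\varphi$, $\alpha$, $\varphi^{-1}$ are all invertible.

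For part (b) the only change is that $\varphi$ is a \emph{skew}-isomorphism, so it swaps the two brackets: $\varphi(\sqb{x,y}_\g)=\cb{\varphi(x),\varphi(y)}_\hh$ and $\varphi(\cb{x,y}_\g)=\sqb{\varphi(x),\varphi(y)}_\hh$, and likewise $\varphi^{-1}$ is a skew-isomorphism swapping the brackets back. Here the point to watch — and the main (if mild) obstacle — is bookkeeping the two bracket-swaps: in the conjugate $\varphi\alpha\varphi^{-1}$, applying $\varphi^{-1}$ turns an $\hh$-bracket into the \emph{opposite} $\g$-bracket, applying $\alpha\in\Aut(\g)$ preserves that $\g$-bracket, and applying $\varphi$ swaps it back, so the two swaps cancel and $\varphi\alpha\varphi^{-1}$ preserves each $\hh$-bracket \emph{as itself}, landing again in $\Aut(\hh)$ rather than merely skew-preserving them. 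I would write the analogue of the displayed computation, carefully tracking that the net effect of $\varphi\circ(-)\circ\varphi^{-1}$ on each bracket is the identity swap $\sqb{-,-}_\hh\mapsto\sqb{-,-}_\hh$. The final assertion $\Aut(\g)=\Aut(\g^s)$ is then immediate: the identity map $\g\to\g^s$ is a skew-isomorphism by the definition of $\g^s$, so taking $\varphi=\id$ in (b) gives an isomorphism $\Aut(\g^s)\cong\Aut(\g)$ realised by conjugation by the identity, i.e.\ the two automorphism groups coincide as subgroups of $\GL(\g)$.
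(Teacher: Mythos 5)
Your proposal is correct. The paper itself explicitly omits the proof of this proposition (it states ``We omit the proofs'' for the results of this subsection), and your argument — conjugation $\alpha \mapsto \varphi\circ\alpha\circ\varphi^{-1}$, with the formal group-isomorphism properties plus the key bookkeeping that in case (b) the two bracket-swaps performed by $\varphi^{-1}$ and $\varphi$ cancel, so the conjugate genuinely lies in $\Aut(\hh)$ rather than only skew-preserving the brackets, and the specialisation $\varphi=\id$ giving $\Aut(\g)=\Aut(\g^s)$ as subgroups of $\GL(\g)$ — is exactly the routine verification the authors intended to leave to the reader.
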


\begin{proposition}\label{switchcohom}
	The following conditions hold.
	\begin{enumerate}[(a)]
		\item If $\g$ and $\hh$ are isomorphic compatible Lie algebras via $\varphi:\g \stackrel{\cong}{\rightarrow} \hh$, then $Z^2(\hh,V) \cong Z^2(\g,V)$ via the map $\omega \mapsto \omega\varphi$, where $(\omega\varphi)(x,y)=\omega(\varphi(x),\varphi(y))$.
		\item If $\g$ and $\hh$ are skew-isomorphic compatible Lie algebras via the map $\varphi:\g \stackrel{\skiso}{\rightarrow} \hh$, then $Z^2(\hh,V) \cong\nolinebreak Z^2(\g,V)$ via the map $(\underline\omega,\undertilde\omega) \mapsto (\undertilde\omega,\underline\omega)\varphi$. In particular, $(\underline\omega,\undertilde\omega)\in Z^2(\g)$ if and only if $(\undertilde\omega,\underline\omega)\in Z^2(\g^s,V)$, where $\g^s$ is a switched copy of $\g$.
	\end{enumerate}
	Moreover, in each of these settings, $B^2(\hh,V)$ is sent to $B^2(\g,V)$ and thus the above is also valid for $H^2(\hh,V)$ and $H^2(\g,V)$.
\end{proposition}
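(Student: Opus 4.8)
The plan is to handle the two parts in parallel, in each case first noting that the prescribed linear map is a bijection, then checking that it carries cocycles to cocycles and coboundaries to coboundaries, so that it descends to $H^2$. The only structural input I will use is the defining property that a compatible Lie algebra homomorphism intertwines both brackets while a skew-homomorphism intertwines each bracket with the other: for an isomorphism $\varphi\colon\g\to\hh$ one has $\varphi(\sqb{x,y}_\g)=\sqb{\varphi(x),\varphi(y)}_\hh$ and $\varphi(\cb{x,y}_\g)=\cb{\varphi(x),\varphi(y)}_\hh$, whereas for a skew-isomorphism $\varphi(\sqb{x,y}_\g)=\cb{\varphi(x),\varphi(y)}_\hh$ and $\varphi(\cb{x,y}_\g)=\sqb{\varphi(x),\varphi(y)}_\hh$. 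Everything else is the bookkeeping of feeding these relations into the cocycle identities of Proposition~\ref{cocycleids}.

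For part (a), the map $\omega\mapsto\omega\varphi$ is manifestly linear, and since $\varphi$ is invertible it is a bijection $Z^2(\hh,V)\to Z^2(\g,V)$ with inverse $\theta\mapsto\theta\varphi^{-1}$. To see that it preserves cocycles I would substitute each of the three identities of Proposition~\ref{cocycleids} for $\omega\varphi$ evaluated on a triple $(x,y,z)\in\g^3$ and push $\varphi$ through every bracket using the intertwining relations; this turns each identity into the corresponding identity for $\omega$ evaluated on $(\varphi(x),\varphi(y),\varphi(z))$. Since $\varphi$ is surjective, these hold for all triples in $\g$ exactly because $\omega\in Z^2(\hh,V)$. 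For coboundaries, if $\omega=\delta\psi$ for a linear map $\psi\colon\hh\to V$, the same substitution gives $\omega\varphi=\delta(\psi\circ\varphi)$, so $B^2(\hh,V)$ is carried into (and, applying $\varphi^{-1}$, onto) $B^2(\g,V)$. Hence the map descends to the claimed isomorphism on $H^2$.

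Part (b) proceeds identically, the only new feature being that the component swap is precisely what compensates for the bracket swap effected by a skew-isomorphism. Writing the image of $\omega=(\underline\omega,\undertilde\omega)$ as $\eta=(\undertilde\omega,\underline\omega)\varphi=(\undertilde\omega\varphi,\underline\omega\varphi)$, I would again evaluate the three identities for $\eta$ on a triple in $\g$ and push $\varphi$ through the brackets. Because $\varphi$ sends $\sqb{-,-}_\g$ to $\cb{-,-}_\hh$ and conversely, the first cocycle identity for $\eta$ (involving $\underline\eta=\undertilde\omega\varphi$ and $\sqb{-,-}_\g$) becomes the \emph{second} identity for $\omega$ (involving $\undertilde\omega$ and $\cb{-,-}_\hh$), the second identity for $\eta$ becomes the first for $\omega$, and the mixed identity for $\eta$ becomes the mixed identity for $\omega$ with its two cyclic-sum summands written in the opposite order; all three therefore vanish because $\omega\in Z^2(\hh,V)$. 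The coboundary check is the same, giving $\eta=\delta(\psi\circ\varphi)$ when $\omega=\delta\psi$. The ``in particular'' assertion is then the special case $\hh=\g^s$, $\varphi=\id$, for which the identity is a skew-isomorphism $\g\skiso\g^s$ and the map reduces to the plain swap $(\underline\omega,\undertilde\omega)\mapsto(\undertilde\omega,\underline\omega)$.

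I do not expect a genuine obstacle here, as the content is entirely routine verification. The one point that requires real care is the matching of identities in part (b): I must track that the bracket swap interchanges the first and second identities of Proposition~\ref{cocycleids} while fixing the mixed one up to reordering its summands, and that the component swap $\underline\omega\leftrightarrow\undertilde\omega$ is exactly the adjustment that makes the two sides coincide. A second, purely notational, caution is that $\varphi$ denotes the (skew-)isomorphism throughout, whereas in the coboundary formula of Proposition~\ref{cocycleids} the same letter named the linear map defining $\delta\varphi$; I will write $\psi$ for the latter to avoid any collision.
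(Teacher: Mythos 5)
Your proof is correct: the intertwining relations you isolate are exactly what is needed, the matching of the first/second/mixed cocycle identities under the bracket-and-component swap in part (b) is tracked accurately, and the coboundary computation $\omega\varphi=\delta(\psi\circ\varphi)$ handles the descent to $H^2$. The paper explicitly omits the proof of this proposition, and your argument is precisely the routine verification the authors leave to the reader, so there is nothing further to compare.
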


\begin{proposition}\label{switchext}
	Denoting $\omega^s=(\undertilde\omega,\underline\omega)$ where $\omega = (\underline\omega,\undertilde\omega)$, we have that $(\g^s)_{\omega^s} = (\g_\omega)^s$.
\end{proposition}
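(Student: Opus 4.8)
The plan is to prove the identity by directly unfolding the definitions of the switch operation and of the cocycle extension on both sides, and then checking that the two resulting compatible Lie algebras share the same underlying vector space and the same ordered pair of products. Because the statement asserts an \emph{equality} of triples, not merely an isomorphism, it suffices to compare the two brackets of each side evaluated on arbitrary elements $(x,u),(y,v)\in \g\oplus V$.

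First I would fix the conventions. Writing the two products of $\g$ as $\sqb{-,-}$ (the first) and $\cb{-,-}$ (the second), the switch $\g^s$ is by definition the compatible Lie algebra whose first product is $\cb{-,-}$ and whose second product is $\sqb{-,-}$. Correspondingly, for $\omega=(\underline\omega,\undertilde\omega)\in Z^2(\g,V)$, the element $\omega^s=(\undertilde\omega,\underline\omega)$ lies in $Z^2(\g^s,V)$ by Proposition~\ref{switchcohom}(b), so the extension $(\g^s)_{\omega^s}$ is well defined and has underlying space $\g^s\oplus V=\g\oplus V$.

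Next I would compute the two products of $(\g^s)_{\omega^s}$ via Definition~\ref{cextdef}, keeping in mind that the roles of the first and second brackets of $\g$ are exchanged in $\g^s$ and that the two components of $\omega^s$ are $\undertilde\omega$ (feeding the first bracket) and $\underline\omega$ (feeding the second). This yields first product $\big(\cb{x,y},\undertilde\omega(x,y)\big)$ and second product $\big(\sqb{x,y},\underline\omega(x,y)\big)$. On the other side, I would form $\g_\omega$ first (so its products are $\sqb{-,-}_\omega$ and $\cb{-,-}_\omega$ as in Definition~\ref{cextdef}) and then switch: the first product of $(\g_\omega)^s$ is $\cb{-,-}_\omega=\big(\cb{x,y},\undertilde\omega(x,y)\big)$ and its second is $\sqb{-,-}_\omega=\big(\sqb{x,y},\underline\omega(x,y)\big)$. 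Comparing term by term, the first brackets agree and the second brackets agree, so the two triples coincide.

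I do not expect any genuine obstacle here; the only care required is purely notational bookkeeping, namely tracking which of $\sqb{-,-}$, $\cb{-,-}$ is designated as the first versus the second product after the switch, and correspondingly which of $\undertilde\omega$, $\underline\omega$ feeds into which bracket. Once the conventions of the switch and of the extension construction are applied consistently, the equality is an immediate identification rather than a computation.
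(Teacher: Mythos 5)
Your proof is correct and follows the same route as the paper: invoke Proposition~\ref{switchcohom} to see that $\omega^s\in Z^2(\g^s,V)$ so $(\g^s)_{\omega^s}$ is well defined, then unfold Definition~\ref{cextdef} on both sides and compare the products. You simply spell out explicitly the comparison that the paper dismisses as immediate.
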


\begin{proof}
	By Proposition~\ref{switchcohom}, $\omega^s \in Z^2(\g^s,V)$, and thus $(\g^s)_{\omega^s}$ is well defined. Looking at the definition of the products in an extension via a cocycle, it is immediate to conclude the result.
\end{proof}

\subsection{Dimensions \texorpdfstring{$1$}{1} and \texorpdfstring{$2$}{2}}

In dimension one there is only one compatible Lie algebra up to isomorphism, namely, the abelian one. We denote this isomorphism class by $\NCL_{1}$.

In dimension two, there exists also only one nilpotent compatible Lie algebra up to isomorphism, namely the abelian one, which we denote by $\NCL_{2}$. If a two-dimensional compatible Lie algebra with basis $\cb{x,y}$ has \emph{any} non-zero product, then forcibly one of $\sqb{x,y}$ or $\cb{x,y}$ is non-zero and the algebra is centreless, thus not nilpotent.

\subsection{Dimension \texorpdfstring{$3$}{3}}

To obtain all the nilpotent compatible Lie algebras of dimension $3$, we only need to consider the one-dimensional extensions of $\NCL_2$. This is because the cohomology of $\NCL_1$ is trivial, and thus, there are no two-dimensional extensions of this algebra which suit our algorithm.

Let us now compute the relevant spaces. Let $\cb{e_1,e_2}$ be a basis of $\NCL_2$.
Since $\NCL_2$ is abelian, $Z(\NCL_2)=\NCL_2$ and $\Aut(\NCL_2)=\GL(\NCL_2)$.

Again, because $\NCL_2$ is abelian, the cocycle and cohomology structure is very simple. Since all products are zero, all pairs of alternating bilinear maps $\NCL_2\times\NCL_2 \rightarrow \KK$ are cocycles and $B^2(\NCL_2,\KK)=0$. Thus, we have
\[
H^2(\NCL_2,\KK)=\spann_\KK\cb{\sqb{(\Delta_{12},0)},\sqb{(0,\Delta_{12})}}.
\]
For simplicity, denote $\gamma_1= (\Delta_{12},0)$ and $\gamma_2= (0,\Delta_{12})$.

We note that since we are working in the case $s=1$ and $Z(\NCL_2)=\NCL_2$, $\TTT_1(\NCL_2)$ is simply
\begin{align*}
	\TTT_1(\NCL_2)&=\cb{\spann_\KK(\sqb{\omega})\mid \omega \in H^2(\NCL_2,\KK), \ann(\omega)=0} \\
	&=\cb{\spann_\KK(\sqb{\omega})\mid \omega \in H^2(\NCL_2,\KK),\omega\neq0},
\end{align*}
and any non-zero cohomology class will yield a central extension which has centre of dimension one. (This could also be deduced from the fact that on any compatible Lie algebra the centre has codimension at least two whenever it is not the whole algebra.)

We now describe the action of $\Aut(\NCL_2)$ on $H^2(\NCL_2,\KK)$. Let $\phi \in \Aut(\NCL_2)$. We write% $\phi(e_1)=x_{11}e_1 + x_{21}e_2$ and $\phi(e_2)=x_{12}e_1 + x_{22}e_2$. We can thus express it as
\[
\phi = \nb{\begin{matrix}
		x_{11} & x_{12} \\
		x_{21} & x_{22}
\end{matrix}},
\]
where $\phi(e_j)=\sum_{i}x_{ij}e_i$ and D:= $\det{\phi} \neq 0$.

Let us compute
\begin{align*}
	(\gamma_1 \phi)(e_1,e_2)=\gamma_1(\phi(e_1),\phi(e_2)) = \gamma_1(x_{11}e_1 + x_{21}e_2,x_{12}e_1 + x_{22}e_2) = (D,0).
\end{align*}
We conclude that $\gamma_1 \phi = D \gamma_1$. Likewise, $\gamma_2 \phi = D \gamma_2$.

We now classify the orbits. For an arbitrary cohomology class $\sqb{\omega}=\sqb{\alpha \gamma_1 + \beta \gamma_2}$, one has $(\sqb{\omega})\phi = \sqb{D\omega}$. Considering that $D$ is non-zero, two cohomology classes are in the same $\Aut(\NCL_2)$-orbit if and only if they are multiples of each other. Therefore, we have the following pairwise distinct orbits:
\[
\mathcal{O}(\gamma_1), \qquad \mathcal{O}(\gamma_2), \qquad \mathcal{O}(\gamma_1+\alpha\gamma_2), \qquad \:\alpha \in \KK^\times.
\]

Recalling Definition~\ref{cextdef}, we finish the algorithm by constructing the appropriate extensions of $\NCL_2$. The underlying vector space in each of these is of dimension $3$, so we extend the previous basis by $e_3$. We perform the following extensions:
\begin{itemize}
	\item extending by $\gamma_1$, one obtains $\NCL_{3,2}$, whose only non-zero product is $\sqb{e_1,e_2}=e_3$;
	\item extending by $\gamma_2$, one obtains $\NCL_{3,3}$, whose only non-zero product is $\cb{e_1,e_2}=e_3$;
	\item extending by $\gamma_1+\alpha\gamma_2$, one obtains the one-parameter family $\NCL_{3,4}^\alpha$ (with $\alpha \neq 0$), whose non-zero products are $\sqb{e_1,e_2}=e_3$ and $\cb{e_1,e_2}=\alpha e_3$.
\end{itemize}
We finalise by considering the (only) three-dimensional compatible Lie algebra with central components, namely $\NCL_{3,1}=\NCL_{2}\oplus\KK$, which is just the abelian compatible Lie algebra of dimension $3$.

\subsection{Dimension \texorpdfstring{$4$}{4}}

We divide this section into parts corresponding to which algebra we extend.

\subsubsection*{The algebras with central components}

As has been discussed before, we need to consider the algebras with central components separately. These are obtained by adding a one-dimensional central component to each of the algebras of dimension three. Thus, we have the following:
\begin{align*}
	\NCL_{4,1}=\NCL_{3,1}\oplus\KK, \quad \NCL_{4,2}=\NCL_{3,2}\oplus\KK,  \\	
	\NCL_{4,3}=\NCL_{3,3}\oplus\KK, \quad \NCL_{4,4}^\alpha=\NCL_{3,4}^\alpha\oplus\KK.
\end{align*}
Naturally, the nontrivial relations are the same as the corresponding ones in dimension $3$.

\subsubsection*{The extensions of $\NCL_{3,1}$}

As has been discussed above, for an abelian compatible Lie algebra $\g$, $Z^2(\g,\KK)$ coincides with $\Hom_\KK\big(\bigwedge^2\g,\KK\big)\times\Hom_\KK\big(\bigwedge^2\g,\KK\big)$ and $B^2(\g,\KK)$ is trivial, so we can consider 
\begin{align}\label{gammas}
	\begin{split}
		\gamma_1 = (\Delta_{23},0), \: \gamma_2 = (\Delta_{13},0), \:\gamma_3 = (\Delta_{12},0), \\
		\gamma_4 = (0,\Delta_{23}), \:\gamma_5 = (0,\Delta_{13}), \:\gamma_6 = (0,\Delta_{12}),
	\end{split}
\end{align}
and we have
\[
H^2(\NCL_{3,1})= \spann_\KK\cb{\sqb{\gamma_1},\sqb{\gamma_2},\sqb{\gamma_3},\sqb{\gamma_4},\sqb{\gamma_5},\sqb{\gamma_6}}.
\]
Again, due to the fact that all products are trivial, $\Aut(\NCL_{3,1})$ coincides with $\GL(\NCL_{3,1})$, and an automorphism of $\NCL_{3,1}$ has the form
\[
\phi = \begin{pmatrix}
	x_{11} & x_{12} &x_{13} \\
	x_{21} &x_{22} &x_{23} \\
	x_{31} &x_{32} &x_{33}
\end{pmatrix}, \quad \det(\phi)\neq 0.
\]

Let us compute the action of $\phi=\Aut(\NCL_{3,1})$ on each cocycle. The easiest way is to just consider the basis of the bilinear alternating maps $\cb{\Delta_{23},\Delta_{13},\Delta_{12}}$ and to express each as a matrix:
\[
M(\Delta_{23})=\begin{pmatrix}
	0 & 0 & 0 \\
	0 & 0 & 1 \\
	0 & -1 & 0  
\end{pmatrix}, \:
M(\Delta_{13})=\begin{pmatrix}
	0 & 0 & 1 \\
	0 & 0 & 0 \\
	1 & 0 & 0  
\end{pmatrix}, \:
M(\Delta_{12})=\begin{pmatrix}
	0 & 1 & 0 \\
	-1 & 0 & 0 \\
	0 & 0 & 0  
\end{pmatrix}.
\]
Now, to compute the action, we can do the matrix computations $\phi^t M(\Delta_{ij}) \phi$ to obtain a closed form for the action. This gives the expression
\[
\Delta_{ij} \phi = D_{k1}\Delta_{23} + D_{k2}\Delta_{13} + D_{k3}\Delta_{12}, 
\]
where $(i,j,k)\in\cb{(1,2,3),(2,3,1),(1,3,2)}$,
%where $(i,j)\in\cb{(1,2),(2,3),(1,3)}$, $k$ is the only element in $\cb{1,2,3}\setminus\cb{i,j}$, %\footnote{In physics we used the notation $\epsilon_{ijk}$, is this standard enough to use it here?}	 
and $D_{k\ell}$ is the determinant of the $(k,\ell)$-minor of $\phi$.

In terms of the $\gamma_i$, this gives
\begin{align}\label{autactiongamma}
	\begin{split}
		\gamma_1 \phi = D_{11}\gamma_1 + D_{12}\gamma_2 + D_{13}\gamma_3,& 
		\quad  \gamma_4 \phi = D_{11}\gamma_4 + D_{12}\gamma_5 + D_{13}\gamma_6, \\
		\gamma_2 \phi = D_{21}\gamma_1 + D_{22}\gamma_2 + D_{23}\gamma_3,& 
		\quad  \gamma_5 \phi = D_{21}\gamma_4 + D_{22}\gamma_5 + D_{23}\gamma_6, \\
		\gamma_3 \phi = D_{31}\gamma_1 + D_{32}\gamma_2 + D_{33}\gamma_3,& 
		\quad  \gamma_6 \phi = D_{31}\gamma_4 + D_{32}\gamma_5 + D_{33}\gamma_6.		
	\end{split}
\end{align}
Now, to compute on any linear combination of the $\gamma_i$ we simply use linearity.

Finally, before computing the orbits of the action of $\Aut(\NCL_{3,1})$ on $H^2(\NCL_{3,1})$, we need to check which cocycles $\omega$ satisfy the condition $\ann(\omega) \cap Z(\NCL_{3,1}) = 0$. 

But since $Z(\NCL_{3,1})= \NCL_{3,1}$, these are simply the cocycles with trivial annihilator.
After solving a system of linear equations, one concludes that the admissible cocycles $\omega=(\underline\omega,\undertilde\omega)$ are the ones where both $\underline\omega$ and $\undertilde\omega $ are nonzero and distinct.

\newpage

\textbf{Computing the orbits}

We will not show all the computations, preferring to outline the process and present just one illustrative example. We leave the full computations for the next case ($\NCL_{3,2}$).%, since the method is essentially the same in all the cases, with 

With the restrictions on admissible cocycles, one can conclude that we can take $\underline{\omega}$ to be of the form $\underline\omega=\gamma_1$, since we can find an automorphism of $\NCL_{3,1}$ that maps any nonzero linear combination $\alpha\gamma_1 + \beta\gamma_2+\delta\gamma_3$ to $\gamma_1$, thus establishing that those are in the same $\Aut(\NCL_{3,1})$-orbit.

Following that, 
%now taking only cocycles of the form $\gamma_1+(0,\undertilde\omega)$, with $\undertilde\omega \neq 0$ and $ \undertilde\omega \neq\Delta_{23}$, 
one can further conclude that there is an automorphism of $\NCL_{3,1}$ that fixes $\Delta_{2,3}$ and maps any nonzero alternating bilinear map different from $\Delta_{2,3}$ to any other in the same conditions.
%
%action of $\GL_3(\KK)$ is transitive on the space of admissible cocycles, 
Thus, any cocycle of the form $\gamma_1+(0,\undertilde\omega)$, with $\undertilde\omega \neq 0$ and $ \undertilde\omega \neq\Delta_{23}$ can be mapped to any other of the same form, whence only one orbit appears for the action, and thus there is only one central extension of $\NCL_{3,1}$ whose centre has dimension exactly one.
We will choose the representative $\omega = \gamma_1 + \gamma_5$, and so we obtain a $4$-dimensional compatible Lie algebra with relations
\[
\sqb{e_2,e_3}=e_4 \quad \cb{e_1,e_3}=e_4,
\]
which we denote by $\NCL_{4,5}$.

\begin{example}	
	As an illustrative example (checking with the expressions in~\eqref{autactiongamma}), we can see that the automorphism
	\[
	\phi = \begin{pmatrix}
		1/\beta' & 0&0 \\
		0 &1 & 0 \\
		0 & 0 & 1
	\end{pmatrix},
	\]
	sends $\gamma_1+\beta'\gamma_5$ to $\gamma_1+\gamma_5$, and that the automorphism
	\[
	\phi = \begin{pmatrix}
		\delta & 0 & 0 \\
		0 & 0 & -1 \\
		0 & 1 & 0
	\end{pmatrix},
	\]
	sends $\gamma_1+\gamma_5$ to $\gamma_1+\delta'\gamma_6$. In the same way, all other admissible cocycles can be mapped to $\omega = \gamma_1 + \gamma_5$.
\end{example}

\subsubsection*{The extensions of $\NCL_{3,2}$}

As with the previous case, we compute $Z^2(\NCL_{3,2},\KK)$, $H^2(\NCL_{3,2},\KK)$ and $\Aut(\NCL_{3,2})$. We recall that the only non-zero product is $\sqb{e_1,e_2}=e_3$.

It is not too hard to verify that $Z^2(\NCL_{3,2},\KK)$ is still just the space of pairs of alternating bilinear forms. 

%We now compute $B^2(\NCL_{3,2},\KK)$. Since $\sqb{\NCL_{3,2},\NCL_{3,2}}=\spann_\KK(e_3)$, we need only consider maps $\varphi : \NCL_{3,2} \rightarrow \KK$ with
%	
%	(MISSING ARGUMENT FOR WHY THE $\varphi$ HAS TO BE OF THIS FORM)

Let $\varphi\in {\NCL_{3,2}}^*$ and set $\lambda=\varphi(e_3)$. Then %(and any other values for $\varphi(e_1)$ and $\varphi(e_2)$). Then,
\begin{align*}
	&\delta\underline\varphi(e_1,e_2)= \varphi(\sqb{e_1,e_2})=\varphi(e_3)=\lambda, %\\
	%		&\delta\underline\varphi(e_1,e_3)= \varphi(\sqb{e_1,e_3})=\varphi(0)=0, \\
	%		&\delta\underline\varphi(e_2,e_3)= \varphi(\sqb{e_2,e_3})=\varphi(0)=0, \\
	%		&\delta\undertilde\varphi(e_1,e_2)= \varphi(\cb{e_1,e_2})=\varphi(0)=0, \\
	%		&\delta\undertilde\varphi(e_1,e_3)= \varphi(\cb{e_1,e_3})=\varphi(0)=0, \\
	%		&\delta\undertilde\varphi(e_2,e_3)= \varphi(\cb{e_2,e_3})=\varphi(0)=0, 
\end{align*}
and $\delta\underline\varphi(e_i,e_j)=0$ for all other pairs $i<j$. Thus $B^2(\NCL_{3,2},\KK)=\spann_\KK\cb{\gamma_3}$, and \[H^2(\NCL_{3,2},\KK)=\spann_\KK\cb{\sqb{\gamma_1},\sqb{\gamma_2},\sqb{\gamma_4},\sqb{\gamma_5},\sqb{\gamma_6}},\]
using the same notation as in~\eqref{gammas}.

We now focus our attention on $\Aut(\NCL_{3,2})$. Let $\phi\in\Aut(\NCL_{3,2})$, with $\phi(e_j)=\sum_i x_{ij}e_i$. We have that
\begin{align*}
	\phi(e_3)&= \phi(\sqb{e_1,e_2}) = \sqb{\phi(e_1),\phi(e_2)} \\
	&= \sqb{x_{11}e_1+x_{21}e_2+x_{31}e_3,\:x_{12}e_1+x_{22}e_2+x_{32}e_3} \\
	&= (x_{11}x_{22}-x_{12}x_{21})e_3,
\end{align*}
with no further restrictions, since there are no other nonzero products to consider. Again, denoting by $D_{ij}$ the determinant of the $(i,j)$-minor of the matrix representing $\phi$, we have that
\[\phi = \begin{pmatrix}
	x_{11} & x_{12} & 0 \\
	x_{21} &x_{22} & 0 \\
	x_{31} &x_{32} &D_{33}
\end{pmatrix}, \quad D_{33}\neq 0.
\]
The action by automorphisms still follows the expressions in~\eqref{autactiongamma}, whence, for an automorphism as above, by simplifying the expression taking into account the entries which are equal to zero, we obtain
\begin{align}\label{orbitscocycle32}
	\begin{split}
		\gamma_1 \phi &= x_{22}D_{33}\gamma_1 + x_{21}D_{33}\gamma_2 + D_{13}\gamma_3,  \\
		\gamma_2 \phi &= x_{12}D_{33}\gamma_1 + x_{11}D_{33}\gamma_2 + D_{23}\gamma_3, \\
		\gamma_3 \phi &= D_{33}\gamma_3, 		
	\end{split}
	\begin{split}
		\gamma_4 \phi &= x_{22}D_{33}\gamma_4 + x_{21}D_{33}\gamma_5 + D_{13}\gamma_6,  \\
		\gamma_5 \phi &= x_{12}D_{33}\gamma_4 + x_{11}D_{33}\gamma_5 + D_{23}\gamma_6, \\
		\gamma_6 \phi &= D_{33}\gamma_6. 		
	\end{split}
\end{align}
At the cohomology level, we have the following
{\small
	\begin{align}\label{orbitscohom32}
		\begin{split}
			\sqb{\gamma_1} \phi &= x_{22}D_{33}\sqb{\gamma_1} + x_{21}D_{33}\sqb{\gamma_2},\\
			\sqb{\gamma_2} \phi &= x_{12}D_{33}\sqb{\gamma_1} + x_{11}D_{33}\sqb{\gamma_2}, \\
			\: 		
		\end{split}
		\begin{split}
			\sqb{\gamma_4} \phi &= x_{22}D_{33}\sqb{\gamma_4} + x_{21}D_{33}\sqb{\gamma_5} + D_{13}\sqb{\gamma_6},  \\
			\sqb{\gamma_5} \phi &= x_{12}D_{33}\sqb{\gamma_4} + x_{11}D_{33}\sqb{\gamma_5} + D_{23}\sqb{\gamma_6}, \\
			\sqb{\gamma_6} \phi &= D_{33}\sqb{\gamma_6}. 		
		\end{split}
\end{align}}

Again, the final step before computing the orbits of the $\Aut(\NCL_{3,2})$-action on $H^2(\NCL_{3,2})$ is to check which cocycles $\omega$ satisfy the condition $\ann(\omega) \cap Z(\NCL_{3,2}) = 0$. 

Since $Z(\NCL_{3,2})= \spann_\KK\cb{e_3}$, we are looking for the ones whose annihilator does not contain $e_3$. 
Since $\gamma_1(e_2,e_3)=(1,0)$, $\gamma_2(e_1,e_3)=(1,0)$, $\gamma_4(e_2,e_3)=(0,1)$, and $\gamma_5(e_1,e_3)=(0,1)$, the cocycles satisfying this restriction are precisely those where any one of $\alpha$, $\beta$, $\alpha'$ or $\beta'$ is nonzero, if we write
\[
\omega = \alpha\gamma_1 + \beta\gamma_2 + \delta\gamma_3 + \alpha'\gamma_4 + \beta'\gamma_5 + \delta'\gamma_6.
\]
In other words, the ones that are not admissible are those of the form $\omega=\delta\gamma_3+\delta'\gamma_6$.

\textbf{Computing the orbits}

%In what follows, we will present an exhaustive list of the admissible cocycles, and show all the automorphisms relating those to a representative of one of the orbits which we will list in the end. Unlike the previous case, we will now also need to check that the orbits we obtain are actually distinct, since there are more than one. 
In what follows, we will compute a complete list of representatives of the orbits of the action on the admissible cocycles. 
To simplify the computations, we will use the expressions in~\eqref{orbitscohom32} and compute the orbits directly in the cohomology space.

Let then
\[
\sqb{\omega} = \sqb{\alpha\gamma_1 + \beta\gamma_2  + \alpha'\gamma_4 + \beta'\gamma_5 + \delta'\gamma_6}.
\]

As before, we will focus on $\underline{\omega}$ first, and we will show that any cohomology class is in the same orbit as one of the form $\sqb{(0,\undertilde\omega)}$ or $\sqb{\gamma_1 +(0, \undertilde\omega)}$ (that is, where $\underline\omega=0$ or $\underline\omega=\gamma_1$).

Writing $(\underline\omega,0) = \alpha\gamma_1+\beta\gamma_2$, we have the following.

If $\beta \neq 0$ and $\alpha = 0$, consider the automorphism
\[
\phi = \begin{pmatrix}
	0 & -\beta &0 \\
	1 &0 & 0 \\
	0 & 0 & \beta
\end{pmatrix},  	\text{ which sends $\gamma_1$ to $\beta\gamma_2$.}
\]

If $\alpha \neq 0$, consider the automorphism
\[
\phi = \begin{pmatrix}
	\alpha & 0& 0 \\
	\beta/\alpha & 1 & 0 \\
	0 & 0 & \alpha
\end{pmatrix}, 	\text{ which sends $\gamma_1$ to $\alpha\gamma_1+\beta\gamma_2$.}
\]

So, any cohomology class of the form $\sqb{\alpha\gamma_1+\beta\gamma_2 + (0,\undertilde\omega)}$ is in the same orbit as one of the form $\sqb{\gamma_1 + (0,\undertilde\omega')}$ or one of the form $\sqb{(0,\undertilde\omega')}$. We now distinguish these two cases separately.

\underline{Case $\underline\omega=0$}:

Write $\omega=\alpha'\gamma_4+\beta'\gamma_5+\delta'\gamma_6$.

%   If $\alpha' \neq 0$ and $\beta',\delta' = 0$, consider the automorphism
%	\[
%	\phi = \begin{pmatrix}
	%		1/\alpha' & 0&0 \\
	%		0 &1 & 0 \\
	%		0 & 0 & 1/\alpha'
	%	\end{pmatrix}, 	\text{ which sends $\alpha' \gamma_4$ to $\gamma_4$.}
%	\]
%
%	
%	
%	If $\beta' \neq 0$ and $\alpha',\delta' = 0$, consider the automorphism
%	\[
%	\phi = \begin{pmatrix}
	%		0 & 1/\sqrt{\beta'} &0 \\
	%		-1 &0 & 0 \\
	%		0 & 0 & 1/\sqrt{\beta'}
	%	\end{pmatrix}, 	\text{which sends $\beta' \gamma_5$ to $\gamma_4$.}
%	\]
%
%	
%	
%	If $\alpha',\beta' \neq 0$ and $\delta' = 0$, consider the automorphism
%	\[
%	\phi = \begin{pmatrix}
	%		1/\alpha' & 0&0 \\
	%		-\beta'/\alpha'^2 &1 & 0 \\
	%		0 & 0 & 1/\alpha'
	%	\end{pmatrix}, \text{ which sends $\alpha'\gamma_4+\beta' \gamma_5$ to $\gamma_4$.}
%	\]
%	
%	
%	
%	If $\alpha',\delta' \neq 0$ and $\beta' = 0$, consider the automorphism
%	\[
%	\phi = \begin{pmatrix}
	%		\alpha'/\delta'^2 & 1&0 \\
	%		0 & -\delta' & 0 \\
	%		1 & 0 & \alpha'/\delta'
	%	\end{pmatrix}, \text{ which sends $\gamma_4$ to $\alpha'\gamma_4+\delta'\gamma_6$.}
%	\]

If $\alpha' = 0$, then $\beta' \neq 0$. Consider the automorphism
\[
\phi = \begin{pmatrix}
	0 & -\beta'&0 \\
	1 & 0 & 0 \\
	1 & \delta' & \beta'
\end{pmatrix}, 	\text{ which sends $\gamma_4$ to $\beta'\gamma_5+\delta'\gamma_6$.}
\]

If $\alpha'\neq 0$, consider the automorphism
\[
\phi = \begin{pmatrix}
	\alpha' &0 &0 \\
	\beta'/\alpha' & 1 & 0 \\
	-\delta' & 0 & \alpha'
\end{pmatrix}, 	\text{ which sends $\gamma_4$ to $\alpha'\gamma_4+\beta'\gamma_5+\delta'\gamma_6$.}
\]

Thus, all cocycles with $\underline{\omega}=0$ are in the same orbit, and we may choose the representative $\omega=\gamma_4$.

\underline{Case $\underline\omega\neq0$}: As seen before, all cocycles $\omega$ where $\underline\omega\neq0$ are in the same orbit as one with $\underline\omega=\gamma_1$, so we can assume $\underline\omega=\gamma_1$. 
So let $\omega = \gamma_1+\alpha'\gamma_4+\beta'\gamma_5+\delta'\gamma_6$.

If $\alpha',\beta',\delta' = 0$, we have simply $\omega=\gamma_1$.

If $\alpha' \neq 0$ and $\beta',\delta' = 0$, we cannot reduce this any further (this fact will be proven later, when we prove that the orbits we obtain here are distinct) and thus we have the representative $\gamma_1+\alpha'\gamma_4$.

%	If $\beta' \neq 0$ and $\alpha',\delta' = 0$, consider the automorphism
%	\[
%	\phi = \begin{pmatrix}
	%		\beta'^{-\frac{2}{3}} &  0 &0 \\
	%		0 &\beta'^{\frac{1}{3}} & 0 \\
	%		0 & 0 & \beta'^{-\frac{1}{3}}
	%	\end{pmatrix}, \text{ which sends $\gamma_1+\beta' \gamma_5$ to $\gamma_1+\gamma_5$.}
%	\]

If $\delta' \neq 0$ and $\alpha',\beta' = 0$, consider the automorphism
\[
\phi = \begin{pmatrix}
	\delta'^2 &  0 &0 \\
	0 &1/\delta' & 0 \\
	0 & 0 & \delta'
\end{pmatrix}, \text{ which sends $\gamma_1+\gamma_6$ to $\gamma_1+\delta'\gamma_6$.}
\]

%	
%	If $\alpha',\beta' \neq 0$ and $\delta' = 0$, consider the automorphism
%	\[
%	\phi = \begin{pmatrix}
	%		1 & -\alpha'/\beta'&0 \\
	%		0 &1 & 0 \\
	%		0 & 0 & 1
	%	\end{pmatrix}, 	\text{ which sends $\gamma_1+\alpha'\gamma_4+\beta' \gamma_5$ to $\gamma_1+\beta'\gamma_5$.}
%	\]

If $\alpha',\delta' \neq 0$ and $\beta' = 0$, consider the automorphism
\[
\phi = \begin{pmatrix}
	1 & 0&0 \\
	0 & 1 & 0 \\
	\delta'/\alpha' & 0 & 1
\end{pmatrix}, \text{ which sends $\gamma_1+\alpha'\gamma_4+\delta' \gamma_6$ to $\gamma_1+\gamma_3+\alpha'\gamma_4$.}
\]
In terms of cohomology this yields $\sqb{\gamma_1+\alpha'\gamma_4+\delta' \gamma_6} \in \mathcal{O}(\sqb{\gamma_1+\alpha'\gamma_4})$.

%	If $\beta',\delta' \neq 0$ and $\alpha' = 0$, consider the automorphism
%	\[
%	\phi = \begin{pmatrix}
	%		1 & 0 &0 \\
	%		0 &1 & 0 \\
	%		0 & -\delta'/\beta' & 1
	%	\end{pmatrix}, \text{ which sends $\gamma_1+\beta' \gamma_5+\delta'\gamma_6$ to $\gamma_1+\beta'\gamma_5$.}
%	\]

If $\beta' \neq 0$, consider the automorphism
\[
\phi = \begin{pmatrix}
	1 & -\alpha'/\beta' &0 \\
	0 &1 & 0 \\
	0 & -\delta'/\beta' & 1\
\end{pmatrix}, 	\text{ which sends $\gamma_1+\alpha'\gamma_4+\beta'\gamma_5+\delta' \gamma_6$ to $\gamma_1+\beta'\gamma_5$.}
\]

We thus have all cases covered and we have the following orbits (already taking cohomology into account): 
\begin{equation}\label{orbits32}
	\mathcal{O}(\sqb{\gamma_1}), \quad\! \mathcal{O}(\sqb{\gamma_4}), \quad\! \mathcal{O}(\sqb{\gamma_1+\beta\gamma_4}), \quad\! \mathcal{O}(\sqb{\gamma_1+\beta\gamma_5}), \quad\! \mathcal{O}(\sqb{\gamma_1+\gamma_6}), \quad\! \beta \in \KK^\times
\end{equation}
where the third and fourth orbits are one-parameter families. We will now show that all of these orbits are pairwise distinct apart from different orbits in the family $\mathcal{O}(\sqb{\gamma_1+\beta\gamma_5})$. This case will be examined separately and we will be able to show that some will in fact coincide depending on the parameter $\beta$.

Firstly, it is clear that $\mathcal{O}(\sqb{\gamma_1})$ and $ \mathcal{O}(\sqb{\gamma_4})$ are distinct from one another and from the remaining orbits, as the action cannot relate two cohomology classes $\sqb{\omega}$ and $\sqb{\theta}$ if $\underline\omega=0$ and $\underline\theta\neq0$ or if $\undertilde\omega=0$ and $\undertilde\theta\neq0$.

It is also clear that $\mathcal{O}(\sqb{\gamma_1+\gamma_6})$ is distinct from the others, as the action on $\gamma_6$ cannot relate it to any element where the coefficients of $\gamma_4$ or $\gamma_5$ are nonzero.

To show that $\mathcal{O}(\sqb{\gamma_1+\beta\gamma_4})\neq \mathcal{O}(\sqb{\gamma_1+\beta'\gamma_4})$ for $\beta'\neq\beta$ we just note that if
$\sqb{\gamma_1+\beta\gamma_4}\phi = \sqb{\gamma_1+\beta'\gamma_4}$ then $\sqb{\gamma_1}\phi=\sqb{\gamma_1}$ and $\sqb{\beta \gamma_4}\phi=\sqb{\beta' \gamma_4}$.
According to~\eqref{orbitscohom32} we have that $x_{22}D_{33}=1$ and $x_{21}=0$. Thus 
\[
\sqb{\gamma_1+\beta'\gamma_4}=\sqb{\gamma_1+\beta\gamma_4}\phi =\sqb{\gamma_1+\beta\gamma_4+ \beta D_{13}\gamma_6},
\]
and it follows that $\beta'=\beta$. 

Lastly, to show that the orbits $\mathcal{O}(\sqb{\gamma_1+\beta\gamma_4})$ and $ \mathcal{O}(\sqb{\gamma_1+\beta'\gamma_5})$ are distinct for $\beta\beta'\neq 0$, suppose that there exists an automorphism $\phi$ such that $\sqb{\gamma_1+\beta\gamma_4}\phi=\sqb{\gamma_1+\beta'\gamma_5}$. 
Then, the coefficient of $\sqb{\gamma_2}$ in $\sqb{\gamma_1}\phi$ is equal to zero, while the coefficient of $\sqb{\gamma_5}$ in $\sqb{\gamma_4}\phi$ is equal to $\beta'/\beta$. But both coefficients are equal to $x_{21}D_{33}$, which is a contradiction.

%But the coefficient of $\sqb{\gamma_2}$ in $\sqb{\gamma_1}\phi$ is the same as the coefficient of $\sqb{\gamma_5}$ in $\sqb{\gamma_4}\phi$, as both are equal to $x_{21}D_{33}$, which is a contradiction.

We now focus on determining when $\mathcal{O}(\sqb{\gamma_1+\beta'\gamma_5})=\mathcal{O}(\sqb{\gamma_1+\beta\gamma_5})$ for $\beta\beta'\neq 0$.
%  being the same orbit as $\mathcal{O}(\sqb{\gamma_1+\beta'\gamma_5})$ depends on $\beta$ and $\beta'$, or in other words, two different parameters may yield the same orbit.

Let $\phi$ be an arbitrary automorphism of $\NCL_{3,2}$, and write it as	
\[\phi = \begin{pmatrix}
	x_{11} & x_{12} & 0 \\
	x_{21} &x_{22} & 0 \\
	x_{31} &x_{32} &D_{33}
\end{pmatrix}, \quad D_{33}\neq 0.
\]
Suppose that $\sqb{\gamma_1+\beta\gamma_5}\phi=\sqb{\gamma_1+\beta'\gamma_5}$. Using the expressions in~\eqref{orbitscohom32} we obtain the following system of equations
\begin{equation*}
	\begin{cases}
		x_{22}D_{33}=1 \\
		x_{21}D_{33}=0 \\
		x_{12}D_{33}=0 \\
		\beta x_{11}D_{33}=\beta' \\
		\beta(x_{11}x_{32}-x_{31}x_{12})=0.
	\end{cases}
\end{equation*}
It can be easily verified that this system of equations is possible if and only if $\beta'/\beta$ has a cube root in $\KK$. In particular, we only have one orbit if and only if the field $\KK$ is cubically closed.

Thus, we have obtained all the distinct $\Aut(\NCL_{3,2})$-orbits. Using the same representatives as in~\eqref{orbits32} we obtain the following extensions.

\begin{itemize}
	\item $\NCL_{4,6}$, with relations $\sqb{e_1,e_2}=e_3$, $\sqb{e_2,e_3}=e_4$; 
	\item $\NCL_{4,7}$, with relations $\sqb{e_1,e_2}=e_3$, $\cb{e_2,e_3}=e_4$;
	\item $\NCL_{4,8}^{\beta}$, with relations $\sqb{e_1,e_2}=e_3$, $\sqb{e_2,e_3}=e_4$, $\cb{e_2,e_3}=\beta e_4$ ($\beta'\neq 0$); 
	\item $\NCL_{4,9}^{\beta}$, with relations $\sqb{e_1,e_2}=e_3$, $\sqb{e_2,e_3}=e_4$, $\cb{e_1,e_3}= \beta e_4$ ($\beta'\neq 0$), where $\NCL_{4,9}^{\beta}\cong\nolinebreak\NCL_{4,9}^{\beta'}$ if and only if $\beta'/\beta$ has a cube root in $\KK$;
	\item $\NCL_{4,10}$, with relations $\sqb{e_1,e_2}=e_3$, $\sqb{e_2,e_3}=e_4$,  $\cb{e_1,e_2}= e_4$.
\end{itemize}

\subsubsection*{The extensions of $\NCL_{3,3}$}

Since $\NCL_{3,3}=(\NCL_{3,2})^s$, we can use Propositions~\ref{switchaut} -- \ref{switchext} %,~\ref{switchcohom} and~\ref{switchext} 
to obtain the list of one-dimensional extensions of $\NCL_{3,3}$ by just considering the switched copies of the algebras in the corresponding list for $\NCL_{3,2}$. 

They are the following:

\begin{itemize}
	\item $\NCL_{4,11}$, with relations $\cb{e_1,e_2}=e_3$, $\cb{e_2,e_3}=e_4$; 
	\item $\NCL_{4,12}$, with relations $\sqb{e_2,e_3}=e_4$, $\cb{e_1,e_2}=e_3$;
	\item $\NCL_{4,13}^{\beta}$, with relations $\sqb{e_2,e_3}=\beta e_4$, $\cb{e_1,e_2}=e_3$, $\cb{e_2,e_3}=e_4 \quad$ ($\beta'\neq 0$); 
	\item $\NCL_{4,14}^{\beta}$, with relations $\sqb{e_1,e_3}= \beta e_4$, $\cb{e_1,e_2}=e_3$, $\cb{e_2,e_3}=e_4 \quad$ ($\beta'\neq 0$), where $\NCL_{4,14}^{\beta}\cong \NCL_{4,14}^{\beta'}$ if and only if $\beta'/\beta$ has a cube root in $\KK$;
	\item $\NCL_{4,15}$, with relations $\sqb{e_1,e_2}= e_4$, $\cb{e_1,e_2}=e_3$, $\cb{e_2,e_3}=e_4$.
\end{itemize}

\subsubsection*{The extensions of $\NCL_{3,4}^\alpha$}

Again, we start by computing the spaces $Z^2(\NCL_{3,4}^\alpha,\KK)$, $H^2(\NCL_{3,4}^\alpha,\KK)$ and $\Aut(\NCL_{3,4}^\alpha)$, where $\alpha \in \KK^\times$. We recall that the non-zero products are $\sqb{e_1,e_2}=e_3$ and $\cb{e_1,e_2}=\alpha e_3$.

Once more, we have that $Z^2(\NCL_{3,4}\alpha,\KK)$ coincides with the space of pairs alternating bilinear functions. In a similar way as before, we can also conclude that $B^2(\NCL_{3,2},\KK)=\spann_\KK\cb{\gamma_3+\alpha\gamma_6}$. Thus, %Choosing the basis
%\[ \cb{\gamma_1,\gamma_2,\gamma_3+\alpha\gamma_6,\gamma_4,\gamma_5,\gamma_6}\]
%Instead of the standard one for $Z^2(\NCL_{3,4},\KK)$, we have that
\[H^2(\NCL_{3,4}\alpha,\KK)=\spann_\KK\cb{\sqb{\gamma_1},\sqb{\gamma_2},\sqb{\gamma_4},\sqb{\gamma_5},\sqb{\gamma_6}},\]
using the same notation as in~\eqref{gammas}.

We now focus our attention on $\Aut(\NCL_{3,4}^\alpha)$. Let $\phi\in\Aut(\NCL_{3,4}^\alpha)$, with $\phi(e_j)=\sum_i x_{ij}e_i$.
%Letting $\phi\in\Aut(\NCL_{3,4}\alpha)$, denote $\phi(e_1)=x_{11}e_1+x_{21}e_2+x_{31}e_3$ and $\phi(e_2)=x_{12}e_1+x_{22}e_2+x_{32}e_3$.
We have that
\begin{align*}
	\phi(e_3)&= \phi(\sqb{e_1,e_2}) = \sqb{\phi(e_1),\phi(e_2)} \\
	&= \sqb{x_{11}e_1+x_{21}e_2+x_{31}e_3,\:x_{12}e_1+x_{22}e_2+x_{32}e_3} \\
	&= (x_{11}x_{22}-x_{12}x_{21})e_3,
\end{align*}
and
\begin{align*}
	\alpha\phi(e_3)&= \phi(\cb{e_1,e_2}) = \cb{\phi(e_1),\phi(e_2)} \\
	&= \cb{x_{11}e_1+x_{21}e_2+x_{31}e_3,\:x_{12}e_1+x_{22}e_2+x_{32}e_3} \\
	&= \alpha(x_{11}x_{22}-x_{12}x_{21})e_3,
\end{align*}
with no further restrictions, since there are no other nonzero products to consider. Again, denoting by $D_{ij}$ the determinant of the $(i,j)$-minor of the matrix representing $\phi$, we have that
\[\phi = \begin{pmatrix}
	x_{11} & x_{12} & 0 \\
	x_{21} &x_{22} & 0 \\
	x_{31} &x_{32} &D_{33}
\end{pmatrix}, \quad D_{33}\neq 0,
\]
as in the case for $\Aut(\NCL_{3,2})$. Therefore, the action on the cocycles still has the expression~\eqref{orbitscocycle32}, %But unlike before, we cannot simply ``erase'' the $\gamma_3$ from the expressions to obtain the action on the cohomology, since 
but now $B^2(\NCL_{3,4}\alpha,\KK)$ is generated by $\gamma_3+\alpha\gamma_6$. Thus, $\sqb{\gamma_3}=\sqb{-\alpha\gamma_6}=-\alpha\sqb{\gamma_6}$.

\textbf{Computing the orbits}

Using the same arguments as in the case of $\NCL_{3,2}$, we can obtain the same orbits in cohomology but for one pair. %since the arguments only depended on the action on the cocycles. 

The remaining case to consider is when relating $\sqb{\gamma_1+\alpha'\gamma_4+\delta' \gamma_6}$ and $\sqb{\gamma_1+\alpha'\gamma_4}$, as that depended on passing to the cohomology. Despite that, these elements are still on the same orbit, where this time we consider the automorphism
\[
\phi = \begin{pmatrix}
	(\alpha-\alpha')^2/\delta'^2 & 0&0 \\
	0 & \delta'/(\alpha-\alpha') & 0 \\
	-	(\alpha-\alpha')/\delta' & 0 & 	(\alpha-\alpha')/\delta'
\end{pmatrix},
\]
which sends $\gamma_1+\alpha'\gamma_4+\delta' \gamma_6$ to $\gamma_1+\gamma_3+\alpha'\gamma_4+\alpha\gamma_6$. 

\noindent Since $\sqb{\gamma_3+\alpha\gamma_6}=0$, this yields $\sqb{\gamma_1+\alpha'\gamma_4+\delta' \gamma_6} \in \mathcal{O}(\sqb{\gamma_1+\alpha'\gamma_4})$.

We consider the same orbits as before, namely
\begin{equation}%\label{orbits32}
	\mathcal{O}(\sqb{\gamma_1}), \quad \mathcal{O}(\sqb{\gamma_4}), \quad \mathcal{O}(\sqb{\gamma_1+\beta\gamma_4}), \quad \mathcal{O}(\sqb{\gamma_1+\beta\gamma_5}), \quad \mathcal{O}(\sqb{\gamma_1+\gamma_6}), \quad \beta \neq 0.
\end{equation}

The arguments for the first four orbits being pairwise distinct remain correct, as we can argue about the coefficients in the action on the cocycles. But now, due to the different cohomology structure, there is a collapse and actually, the last orbit coincides with the first. To see that, we consider the automorphism
\[
\phi = \begin{pmatrix}
	1 & 0&0 \\
	0 & 1 & 0 \\
	-1/\alpha & 0 & 1
\end{pmatrix}.
\]
It sends $\gamma_1+ \gamma_6$ to $\gamma_1+(1/\alpha)\gamma_3+\gamma_6$. Since $\sqb{\gamma_3+\alpha\gamma_6}=0$, on the cohomology level this yields $\sqb{\gamma_1+\gamma_6} \in \mathcal{O}(\sqb{\gamma_1})$.

Thus, we have the following \emph{distinct} orbits
\begin{equation}%\label{orbits32}
	\mathcal{O}(\sqb{\gamma_1}), \quad \mathcal{O}(\sqb{\gamma_4}), \quad \mathcal{O}(\sqb{\gamma_1+\beta\gamma_4}), \quad \mathcal{O}(\sqb{\gamma_1+\beta\gamma_5}), \quad \beta \neq 0,
\end{equation}
and using these representatives, we obtain the following extensions
\begin{itemize}
	\item $\NCL_{4,16}^\alpha$, with relations $\sqb{e_1,e_2}=e_3$, $\sqb{e_2,e_3}=e_4$, $\cb{e_1,e_2}=\alpha e_3$; 
	\item $\NCL_{4,17}^\alpha$, with relations $\sqb{e_1,e_2}=e_3$, $\cb{e_1,e_2}=\alpha e_3$, $\cb{e_2,e_3}=e_4$;
	\item $\NCL_{4,18}^{\alpha,\beta}$, with relations $\sqb{e_1,e_2}=e_3$, $\sqb{e_2,e_3}=e_4$, $\cb{e_1,e_2}=\alpha e_3$, $\cb{e_2,e_3}=\beta e_4 \quad $ ($\beta \neq 0$); 
	\item $\NCL_{4,19}^{\alpha,\beta}$, with relations $\sqb{e_1,e_2}=e_3$, $\sqb{e_2,e_3}=e_4$, $\cb{e_1,e_2}=\alpha e_3$, $\cb{e_1,e_3}= \beta e_4$, where $\NCL_{4,19}^{\alpha,\beta}\cong \NCL_{4,19}^{\alpha',\beta'}$ if and only if $\alpha=\alpha'$ and $\beta'/\beta$ has a cube root in $\KK$.
\end{itemize}
\subsubsection*{The extensions of $\NCL_{1}$ and $\NCL_{2}$}

We now focus our attention on the $3$-dimensional extensions of $\NCL_{1}$ and the $2$-dimensional extensions of $\NCL_{2}$. 

Considering that $H^2(\NCL_{1},\KK)$ is trivial and that $H^2(\NCL_{2},\KK)$ is one-dimensional, we have that both $G_3(H^2(\NCL_{1},\KK))$ and $G_2(H^2(\NCL_{2},\KK))$ are empty. Thus there are no extensions of these algebras in dimension $4$ to be considered by our algorithm.

\subsection{The complete classification table}

Here, we present a table with all the nilpotent compatible Lie algebras of dimension at most $4$ over a field $\KK$ of characteristic not $2$, up to isomorphism. We denote each isomorphism class by $\NCL_{n,q}^{par}$, where $n$ is the dimension, and $par$ is a (possibly empty) set of non-zero\footnote{Every time we make a parameter equal to zero we recover an algebra with one less parameter already on the list.} parameters.

The table presents the non-zero products between the basis elements $e_1, \ldots, e_n$.

{\arraycolsep=1pt\def\arraystretch{2.2}
	\begin{longtable}{ | c | c| c | } 
		\hline
		$\text{Algebra}$ & $\text{Relations}$ & $\parbox{4.6em}{Dimension of centre}$\\
		\hhline{|=|=|=|}
		$\NCL_{1} $& $\text{Abelian of dimension } 1$ & $1$ \\ 
		\hline
		$\NCL_{2}$ & $\text{Abelian of dimension } 2$ & $2$ \\ 
		\hline
		$\NCL_{3,1}$ & $\text{Abelian of dimension } 3$ & $3$\\ 
		\hline
		$\NCL_{3,2}$ & $\sqb{e_1,e_2}=e_3$& $1$ \\ 
		\hline
		$\NCL_{3,3}$ & $\cb{e_1,e_2}=e_3$ & $1$\\ 
		\hline
		$\NCL_{3,4}^\alpha$ & $\sqb{e_1,e_2}=e_3$\quad $\cb{e_1,e_2}=\alpha e_3 $& $1$ \\ 
		\hline %%%%%%%%%%%%%%%%%%%%%%%%%%%%%%%%%%%%%%%%%%%%%%%%%%%%%%%%%%%%%%%%%%%%%%%%%%%%
		$\NCL_{4,1}$ & $\text{Abelian of dimension } 4$ & $4$\\ 
		\hline
		$\NCL_{4,2}$ & $\sqb{e_1,e_2}=e_3$ & $2$\\ 
		\hline
		$\NCL_{4,3}$ & $\cb{e_1,e_2}=e_3$ & $2$\\ 
		\hline
		$\NCL_{4,4}^\alpha$ & $\sqb{e_1,e_2}=e_3$\quad $\cb{e_1,e_2}=\alpha e_3$ & $2$ \\ 
		\hline
		$\NCL_{4,5}$ & $\sqb{e_2,e_3}=e_4$ \quad $\cb{e_1,e_3}=e_4$ & $1$\\ 
		\hline %%%%%%%%%%%%%%%%%%%%%%%%%%%%%%%%%%%%%%%%%%%%%%%%%%%%%%%%%%%%%%%%%%%%%%%%%%%%%%%%
		$\NCL_{4,6}$ & $\sqb{e_1,e_2}=e_3$ \quad $\sqb{e_2,e_3}=e_4$& $1$ \\ 
		\hline
		$\NCL_{4,7}$ & $\sqb{e_1,e_2}=e_3$  \quad$\cb{e_2,e_3}=e_4$ & $1$\\
		\hline
		$\NCL_{4,8}^{\beta}$ & $\sqb{e_1,e_2}=e_3$  \quad $\sqb{e_2,e_3}=e_4$ \quad   $\cb{e_2,e_3}=\beta e_4$ & $1$\\ 
		\hline
		$\NCL_{4,9}^{\beta}$ & $\sqb{e_1,e_2}=e_3$  \quad $\sqb{e_2,e_3}=e_4$ \quad  $\cb{e_1,e_3}= \beta e_4$ & $1$\\ 
		\hline	
		$\NCL_{4,10}$ & $\sqb{e_1,e_2}=e_3$  \quad $\sqb{e_2,e_3}=e_4$ \quad  $\cb{e_1,e_2}= e_4$ & $1$\\ 
		\hline %%%%%%%%%%%%%%%%%%%%%%%%%%%%%%%%%%%%%%%%%%%%%%%%%%%%%%%%%%%%%%%%%%%%%%%%%%%%%%%%%%%%%
		$\NCL_{4,11}$ & $\cb{e_1,e_2}=e_3$ \quad $\cb{e_2,e_3}=e_4$ & $1$\\ 
		\hline
		$\NCL_{4,12}$ & $\cb{e_1,e_2}=e_3$  \quad$\sqb{e_2,e_3}=e_4$ & $1$\\
		\hline
		$\NCL_{4,13}^{\beta}$ & $\cb{e_1,e_2}=e_3$  \quad $\cb{e_2,e_3}=e_4$ \quad   $\sqb{e_2,e_3}=\beta e_4$ & $1$\\ 
		\hline
		$\NCL_{4,14}^{\beta}$ & $\cb{e_1,e_2}=e_3$  \quad $\cb{e_2,e_3}=e_4$ \quad  $\sqb{e_1,e_3}= \beta e_4$& $1$\\ 
		\hline	
		$\NCL_{4,15}$ & $\cb{e_1,e_2}=e_3$  \quad $\cb{e_2,e_3}=e_4$ \quad  $\sqb{e_1,e_2}= e_4$& $1$\\ 
		\hline %%%%%%%%%%%%%%%%%%%%%%%%%%%%%%%%%%%%%%%%%%%%%%%%%%%%%%%%%%%%%%%%%%%%%%%%%%%%%%%%%%%%%
		$\NCL_{4,16}^\alpha$ & $\sqb{e_1,e_2}=e_3$ \quad $\sqb{e_2,e_3}=e_4$ \quad $\cb{e_1,e_2}=\alpha e_3$ & $1$\\ 
		\hline
		$\NCL_{4,17}^\alpha$ & $\sqb{e_1,e_2}=e_3$  \quad $\cb{e_1,e_2}=\alpha e_3$ \quad $\cb{e_2,e_3}=e_4$ & $1$\\
		\hline
		$\NCL_{4,18}^{\alpha,\beta}$ & $\begin{matrix}
			\sqb{e_1,e_2}=e_3  & \sqb{e_2,e_3}=e_4 \\ \cb{e_1,e_2}=\alpha e_3 & \quad \cb{e_2,e_3}=\beta e_4
		\end{matrix}$ & $1$\\ 
		\hline
		$\NCL_{4,19}^{\alpha,\beta}$ & $\begin{matrix}
			\sqb{e_1,e_2}=e_3  & \sqb{e_2,e_3}=e_4 \\ \cb{e_1,e_2}=\alpha e_3 & \quad \cb{e_1,e_3}= \beta e_4
		\end{matrix}$ & $1$\\  
		\hline	
	\end{longtable}
	
}
The only possible isomorphisms from this table are the following:
\begin{itemize}
	\item $\NCL_{4,9}^{\beta}\cong \NCL_{4,9}^{\beta'}$ if and only if $\beta'/\beta$ has a cube root in $\KK$;
	\item $\NCL_{4,14}^{\beta}\cong \NCL_{4,14}^{\beta'}$ if and only if $\beta'/\beta$ has a cube root in $\KK$;
	\item $\NCL_{4,19}^{\alpha,\beta}\cong \NCL_{4,19}^{\alpha',\beta'}$ if and only if $\alpha'=\alpha$ and $\beta'/\beta$ has a cube root in $\KK$.
\end{itemize}
In particular, if the base field $\KK$ is cubically closed, each of the first two of these three families reduces to one isomorphism class, ant the third to a one-parameter family.

\subsection{Skew-isomorphisms within the classification}

The above table does not have isomorphic pairs of algebras, by construction. Nonetheless, two compatible Lie algebras can be skew-isomorphic without being isomorphic, and this section is dedicated to presenting the skew-isomorphisms that appear.

By the symmetry of the definition of nilpotency, we can immediately see that a compatible Lie algebra $\g$ is nilpotent if and only if $\g^s$ is. So, all algebras in the above classification come in pairs of skew-isomorphic copies (where it may be possible  for an algebra to be paired up with itself).

Up to dimension $3$, we have the following pairings.

\begin{itemize}
	\item $\NCL_1$, $\NCL_2$ and $\NCL_{3,1}$ are self skew-isomorphic,
	\item $\NCL_{3,2}\skiso\NCL_{3,3}$ (with the above relations, $(\NCL_{3,2})^s=\NCL_{3,3}$),
	\item $\NCL_{3,4}^\alpha\skiso \NCL_{3,4}^{1/\alpha}$.
\end{itemize}

In dimension $4$ we have the following skew-isomorphisms:
\begin{itemize}
	\item $\NCL_{4,1}$ and $\NCL_{4,5}$ are self skew-isomorphic,
	\item $\NCL_{4,2}\skiso\NCL_{4,3}$ (with the above relations, $(\NCL_{4,2})^s=\NCL_{4,3}$),
	\item $\NCL_{4,4}^\alpha\skiso \NCL_{4,4}^{1/\alpha}$,
	\item $\NCL_{4,6}\skiso \NCL_{4,11}$ (with the above relations, $(\NCL_{4,6})^s=\NCL_{4,11}$),
	\item $\NCL_{4,7}\skiso \NCL_{4,12}$ (with the above relations, $(\NCL_{4,7})^s=\NCL_{4,12}$),
	\item $\NCL_{4,8}^\beta\skiso \NCL_{4,13}^{\beta}$ (with the above relations, $(\NCL_{4,8}^\beta)^s=\NCL_{4,13}^{\beta}$),
	\item $\NCL_{4,9}\skiso \NCL_{4,14}$ (with the above relations, $(\NCL_{4,9})^s=\NCL_{4,14}$),
	\item $\NCL_{4,10}\skiso \NCL_{4,15}$ (with the above relations, $(\NCL_{4,10})^s=\NCL_{4,15}$),
	\item $\NCL_{4,16}^\alpha\skiso \NCL_{4,17}^{1/\alpha}$,
	\item $\NCL_{4,18}^{\alpha,\beta} \skiso \NCL_{4,18}^{1/\alpha,1/\beta}$,
	\item $\NCL_{4,19}^{\alpha} \skiso \NCL_{4,19}^{1/\alpha}$.
\end{itemize}

Whereas the pairs which are switched copies of one another are trivially skew-isomorphic (that is, with the skew-isomorphism being the identity map), some of the pairs are skew-isomorphic in a nontrivial way, as in the skew-isomorphisms $\NCL_{3,4}^\alpha\skiso \NCL_{3,4}^{1/\alpha}$ and $\NCL_{4,16}^\alpha\skiso \NCL_{4,17}^\alpha$.

\section*{Acknowledgements}

The authors thank the anonymous referees, in particular for suggesting to emphasize the general definition of cohomology for compatible Lie algebras, resulting in a more coherent and streamlined Section~\ref{TheoreticalBases}.

\section*{Declarations}
 The first and second authors are supported by Agencia Estatal de Investigaci\'on (Spain), grant PID2020-115155GB-I00 (European FEDER support included, UE) and by Xunta de Galicia through the Competitive Reference Groups (GRC), ED431C 2023/31.

 The second author is supported by an FCT -- Funda\c c\~ao para a Ci\^encia e a Tecnologia, I.P scholarship with reference number 2023.00796.BD.

 The second and third authors are supported by CMUP, a member of LASI, which is financed by national funds through FCT -- Funda\c c\~ao para a Ci\^encia e a Tecnologia, I.P., under the projects with reference UIDB/00144/2020 and UIDP/00144/2020.

\bibliographystyle{abbrv}

\begin{thebibliography}{10}
	
	\bibitem{NilpPoissonClass}
	H.~Abdelwahab, E.~Barreiro, A.~J. Calder\'{o}n, and A.~Fern\'{a}ndez~Ouaridi.
	\newblock The algebraic classification and degenerations of nilpotent {P}oisson
	algebras.
	\newblock {\em J. Algebra}, 615:243--277, 2023.
	
	\bibitem{NullFiliformLeib17}
	J.~K. Adashev, L.~M. Camacho, and B.~A. Omirov.
	\newblock Central extensions of null-filiform and naturally graded filiform
	non-{L}ie {L}eibniz algebras.
	\newblock {\em J. Algebra}, 479:461--486, 2017.
	
	\bibitem{NilpLieClass12}
	S.~Cical\`o, W.~A. de~Graaf, and C.~Schneider.
	\newblock Six-dimensional nilpotent {L}ie algebras.
	\newblock {\em Linear Algebra Appl.}, 436(1):163--189, 2012.
	
	\bibitem{NilpAssClass10}
	W.~A. de~Graaf.
	\newblock Classification of nilpotent associative algebras of small dimension.
	\newblock {\em Internat. J. Algebra Comput.}, 28(1):133--161, 2018.
	
	\bibitem{CharacterFormulasBiHamiltonian06}
	V.~V. Dotsenko and A.~S. Khoroshkin.
	\newblock Character formulas for the operad of a pair of compatible brackets
	and for the bi-{H}amiltonian operad.
	\newblock {\em Funct. Anal. App.}, 41(1):1--17, 2007.
	\newblock Translated from Russian.
	
	\bibitem{CompatibleChiral}
	I.~Z. Golubchik and V.~V. Sokolov.
	\newblock Compatible {L}ie brackets and integrable equations of the principal
	chiral field model type.
	\newblock {\em Funct. Anal. App.}, 36(3):172--181, 2002.
	\newblock Translated from Russian.
	
	\bibitem{CompatibleLoop}
	I.~Z. Golubchik and V.~V. Sokolov.
	\newblock Factorization of the loop algebras and compatible {L}ie brackets.
	\newblock {\em J. Nonlinear Math. Phys.}, 12:343--350, 2005.
	
	\bibitem{CompatibleYangBaxter}
	I.~Z. Golubchik and V.~V. Sokolov.
	\newblock Compatible {L}ie brackets and the {Y}ang-{B}axter equation.
	\newblock {\em Theoret. and Math. Phys.}, 146(2):159--169, 2006.
	\newblock Translated from Russian.
	
	\bibitem{NilpJordanClass16}
	A.~S. Hegazi and H.~Abdelwahab.
	\newblock Classification of five-dimensional nilpotent {J}ordan algebras.
	\newblock {\em Linear Algebra Appl.}, 494:165--218, 2016.
	
	\bibitem{NilpMalcevClass18}
	A.~S. Hegazi, H.~Abdelwahab, and A.~J. Calder\'on~Mart\'in.
	\newblock Classification of nilpotent {M}alcev algebras of small dimensions
	over arbitrary fields of characteristic not 2.
	\newblock {\em Algebr. Represent. Theory}, 21(1):19--45, 2018.
	
	\bibitem{KaygorodovHabilitation}
	I.~Kaygorodov.
	\newblock Non-associative algebraic structures: classification and structure.
	\newblock {\em Commun. Math.}, 32(3):1--62, 2024.
	
	\bibitem{NilpAnticommClass20}
	I.~Kaygorodov, M.~Khrypchenko, and S.~A. Lopes.
	\newblock The algebraic and geometric classification of nilpotent
	anticommutative algebras.
	\newblock {\em J. Pure Appl. Algebra}, 224(8):106337, 32, 2020.
	
	\bibitem{NilpAlgClass21}
	I.~Kaygorodov, M.~Khrypchenko, and S.~A. Lopes.
	\newblock The algebraic classification of nilpotent algebras.
	\newblock {\em J. Algebra Appl.}, 21(12):Paper No. 2350009, 25, 2022.
	
	\bibitem{NullFilliformClass20}
	I.~Kaygorodov, S.~A. Lopes, and P.~P\'aez-Guill\'an.
	\newblock Non-associative central extensions of null-filiform associative
	algebras.
	\newblock {\em J. Algebra}, 560:1190--1210, 2020.
	
	\bibitem{MCCohomCLA}
	J.~Liu, Y.~Sheng, and C.~Bai.
	\newblock Maurer-{C}artan characterizations and cohomologies of compatible
	{L}ie algebras.
	\newblock {\em Sci. China Math.}, 66(6):1177--1198, 2023.
	
	\bibitem{CompatibleLeibniz2023}
	A.~Makhlouf and R.~Saha.
	\newblock On compatible {L}eibniz algebras, 2023.
	\newblock arXiv: 2305.01025v1.
	
	\bibitem{CompatibleAlgebrasMarquez}
	S.~{M\'arquez Flores}.
	\newblock {\em Compatible {A}lgebras}.
	\newblock Ph{D} thesis, Universidad de Talca, Talca, Chile, July 2017.
	\newblock Available at
	\url{http://dspace.utalca.cl/handle/1950/12020?mode=full}.
	
	\bibitem{OdesskiiSokolovPairsCompatibleAssoc2006}
	A.~V. Odesskii and V.~V. Sokolov.
	\newblock Algebraic structures connected with pairs of compatible associative
	algebras.
	\newblock {\em Int. Math. Res. Not.}, pages Art. ID 43734, 35, 2006.
	
	\bibitem{CompatibleEliptic}
	A.~V. Odesskii and V.~V. Sokolov.
	\newblock Compatible {L}ie brackets related to elliptic curve.
	\newblock {\em J. Math. Phys.}, 47(1):013506, 14, 2006.
	
	\bibitem{OdesskiiSokolovPairsCompatibleAssoc2008}
	A.~V. Odesskii and V.~V. Sokolov.
	\newblock Pairs of compatible associative algebras, classical {Y}ang-{B}axter
	equation and quiver representations.
	\newblock {\em Comm. Math. Phys.}, 278(1):83--99, 2008.
	
	\bibitem{SkjelbredSund77}
	T.~Skjelbred and T.~Sund.
	\newblock Sur la classification des alg\`ebres de {L}ie nilpotentes.
	\newblock {\em C. R. Acad. Sci. Paris S\'{e}r. A-B}, 286(5):A241--A242, 1978.
	
	\bibitem{OperadsCompatible08}
	H.~Strohmayer.
	\newblock Operads of compatible structures and weighted partitions.
	\newblock {\em J. Pure Appl. Algebra}, 212(11):2522--2534, 2008.
	
	\bibitem{WeibelHomAlg}
	C.~A. Weibel.
	\newblock {\em An introduction to homological algebra}, volume~38 of {\em
		Cambridge Studies in Advanced Mathematics}.
	\newblock Cambridge University Press, Cambridge, 1994.
	
	\bibitem{CompatibleLieBialgebras2015}
	M.-Z. Wu and C.-M. Bai.
	\newblock Compatible {L}ie bialgebras.
	\newblock {\em Commun. Theor. Phys. (Beijing)}, 63(6):653--664, 2015.
	
	\bibitem{CohomDeformCompLeibninz2023}
	R.~Yadav, R.~Bhutia, and N.~Behera.
	\newblock Cohomology and deformations of compatible {L}eibniz algebras, 2023.
	\newblock arXiv: 2306.06105.
	
\end{thebibliography}

\end{document}